\documentclass[11pt]{article}
\usepackage{graphicx}
\usepackage{xcolor}
\usepackage[hyphens]{url}
\usepackage{hyperref}

\usepackage{cite}

\usepackage{amsmath, amsthm, amssymb, amsfonts}
\usepackage{mathrsfs}
\usepackage{stmaryrd}
\usepackage{tikz-cd}
\usetikzlibrary{arrows}
\usepackage{enumerate}
\usepackage{indentfirst}

\usepackage[a4paper]{geometry}
\definecolor{blue-url}{RGB}{0,0,100}
\definecolor{red-url}{RGB}{100,0,0}
\definecolor{green-url}{RGB}{0,100,0}

\hypersetup{
	colorlinks=true,
	linkcolor=blue-url,
	citecolor=green-url,
	urlcolor=red-url
}

\newtheorem{theorem}{Theorem}

\newtheorem{open[theorem]}{Problem}
\newtheorem{question}[theorem]{Question}
\newtheorem{lemma}[theorem]{Lemma}

\newtheorem{corollary}[theorem]{Corollary}
\newtheorem{proposition}[theorem]{Proposition}
\theoremstyle{definition}

\newtheorem{remark}[theorem]{Remark}%[section]

\makeatletter
\renewcommand\maketitle{
	\begin{center}
		\hrule height2pt
		\vskip10pt
		{\LARGE\bfseries \@title \par}
		\vskip10pt
		\hrule height.5pt
		\vskip5pt
		{\large\bfseries \@author \par}
		\vskip3pt
		\hrule height2pt
	\end{center}
	\vskip6pt
}
\makeatother

\usepackage{tikz}
\usetikzlibrary{mindmap, shadows}

\title{\Large On the structural behavior of images of polynomials}

\author{Tsiu-Kwen Lee, Tran Nam Son\footnote{Tran Nam Son is the corresponding author.}}

\begin{document}
	
	\maketitle
	
	\begin{abstract}
		\footnotesize

 The study of images of noncommutative polynomials on algebras has attracted considerable attention. We investigate polynomial images and the additive structures they generate in associative algebras, focusing on sums and products of values. Motivated by results on additive commutators, we show that finite sums of such products on a nonzero ideal must contains a nonzero ideal, with only minor exceptions. Consequently, for a simple algebra, the subring generated by the image of a noncentral polynomial coincides with the whole algebra, up to a small exceptional case. We further study representations of elements as sums of products of polynomial values, and examine products of additive commutators for matrices over division rings. To simplify multilinear polynomials, we introduce decomposable polynomials and show that, in many cases, their images equal the whole algebra. Finally, we consider polynomial commutators and prove that every noncommutative infinite simple algebra is generated by such elements, together with results on multiplicative commutators, including a complete description for real quaternions.
	\end{abstract}
	
%	\vspace{10px}
	
	\noindent{\textbf{Keywords: }{Noncommutative polynomial; Prime ring; Simple ring; Matrix algebra;  Commutator.
			
			\noindent{\textbf{Mathematics Subject Classification 2020 (MSC2020):}{16N60; 16S50; 16U99;~47B47.
					
					%\vspace{10px}}
				
				\section{Introduction}\label{sec1}
				
				Let $F$ be a field, and let $F\langle \mathcal{X} \rangle$ denote the free associative $F$-algebra generated by the set $\mathcal{X}:=\{x_1, x_2, \ldots\}$, that is, the associative $F$-algebra of noncommutative polynomials in the noncommuting variables $x_i$. For any associative $F$-algebra $\mathcal{A}$ and any polynomial $p = p(x_1, x_2, \ldots, x_m) \in F\langle \mathcal{X} \rangle$ with $m \ge 1$, we define $$p(\mathcal{A}) = \{\, p(a_1, a_2, \ldots, a_m) \mid a_1, a_2, \ldots, a_m \in \mathcal{A} \,\},$$
				and refer to this set as the \textit{image} of $p$ evaluated on $\mathcal{A}$. Note that we tacitly assume that $p$ has zero constant term whenever $A$ is not unital.  Throughout this paper, the term \textit{algebra} is understood to mean an associative algebra. Furthermore, when we say that a polynomial $p$ is \textit{not central-valued} on an algebra $\mathcal{A}$, we mean that the set $p(\mathcal{A})$ is not contained in the center of $\mathcal{A}$.
				
			Noncommutative polynomials constitute one of the central objects in noncommutative algebra, and the problem of describing their images has long been both a classical and an actively developing area of research. In the classical setting, this line of study is closely related to the theory of rings with polynomial identities \cite{Pa_Ka_48}. More recently, the subject has experienced a revival-initiated approximately fifteen years ago-through the work of A.~Kanel-Belov, S.~Malev, and L.~Rowen \cite{Pa_KaMaRo_12} in connection with the L'vov-Kaplansky conjecture. This conjecture asserts that if $F$ is an infinite field, $n \ge 2$, and $p \in F\langle \mathcal{X} \rangle$ is a multilinear polynomial of the form $$\displaystyle p = \sum_{\sigma \in S_m} \lambda_\sigma \, x_{\sigma(1)} x_{\sigma(2)} \cdots x_{\sigma(m)},$$
			where $m$ is a positive integer and each $\lambda_\sigma \in F$, then the image $p(\mathrm{M}_n(F))$ is always a vector space. For the most recent developments, we refer the reader to the survey \cite{Pa_Ka_20}.
			
			Since such images are invariant under scalar multiplication, the main difficulty lies in determining whether they are also closed under addition. In addition, increasing attention has been devoted to additive subgroups \cite{Pa_Chu_87,  Pa_Lee_23,  Pa_Lee_22}, ideals \cite[Section 3]{Pa_Lee_22}, and linear spans \cite{Pa_Bre_20,Pa_Bre_23,Pa_Bre_23_1} generated by images of polynomials. Motivated by this perspective, the present paper investigates the behavior of images of noncommutative polynomials by examining the algebraic substructures they generate, in particular the additive subgroup and the subring arising from these images.  This paper also revisits the behavior of subrings generated by additive commutators, as investigated in recent works \cite{Pa_Er_22, Pa_Ga_25}.
			
		Another motivation arises from \cite[Theorem~4.9]{Pa_PaSo_26}, which shows that, under suitable conditions, every matrix over a centrally finite algebraically closed division ring can be expressed as the difference of two multiplicative commutators from $p(\mathrm{M}_n(D))$. From a ring-theoretic perspective, this naturally prompts a more detailed investigation of the subring of $\mathrm{M}_n(D)$ generated by elements of the image $p(\mathrm{M}_n(D))$, and raises structural questions concerning the multiplicative behavior of such images.

			Concretely, our paper is systematically organized as follows.  Section~\ref{sec sub} is devoted to additive subgroups generated by images, motivated by \cite{Pa_Ga_25}, where it is shown that double products of additive commutators suffice for generation. We begin with Theorem~\ref{thm27}, which shows that in a prime ring, the set of all finite sums of products of two values of a polynomial on a nonzero ideal necessarily contains a nonzero ideal, with only minor exceptions. As a consequence, Corollary~\ref{cor21} establishes that if $R$ is a simple $F$-algebra and $p\in F\langle\mathcal{X}\rangle$ is not central-valued on $R$, then $R$ coincides with the subring generated by $p(R)$, again up to a small exceptional case.
		
		Furthermore, Theorem~\ref{thm31} shows that if $R$ is an algebra over a field $F$ with $|F|>2$ and $p\in F\langle\mathcal{X}\rangle$ is such that $p(R)$ is fully noncentral, then for every maximal ideal $M$ of $R$ one has
		$
		R=(p(R)^2)^+ + M,
		$
		where $(p(R)^2)^+$ denotes the additive subgroup generated by all products $ab$ with $a,b\in p(R)$. The remainder of Section~\ref{sec sub} is concerned with the possibility of expressing elements as sums of products of images, as well as with the question of whether a uniform bound exists on the number of summands required (see Theorems~\ref{thm76}, \ref{thm77}, \ref{thm80}, and~\ref{thm81}).

		Section~\ref{sec product} focuses on the problem of expressing elements as products of additive commutators. This section aims to address several aspects of the still difficult question of when a matrix over a division ring $D$ can be written as a product of two additive commutators. We consider, in particular, the case where the center of $D$ is finite (Theorem~\ref{q>n}), the existence of division rings in which every element can be expressed as such a product and whose center is a prescribed field (Theorem~\ref{exist}), and the corresponding result for matrices over these division rings (Theorem~\ref{mainthm}). We also examine related topics, including algebraicity of division rings (Theorem~\ref{algebraic}) and finitary matrices (Theorem~\ref{finitary1}).

		In an effort to reduce the complexity of multilinear polynomials, we turn to the idea of factorization. Specifically, we describe the images of so-called \textit{decomposable} multilinear polynomials, namely those that can be written as products of two multilinear polynomials in disjoint sets of variables. We show that, in certain settings, their images coincide with the entire algebra, namely, for finitary matrix algebras (Theorems~\ref{Vitas1} and \ref{Vitas2}) and for matrix algebras over division rings (Theorem~\ref{Waring1}).

		Section~\ref{sec po} is devoted to a nonlinear analogue of additive commutators, namely polynomial commutators, introduced by Laffey and West for matrices over fields \cite{Pa_LaWe_93}. Among other results, Corollary~\ref{cor23} shows that every noncommutative infinite simple algebra is generated, as a ring, by its polynomial commutators.
		
		Finally, motivated by \cite[Theorem~4.9]{Pa_PaSo_26}, we investigate when an element can be expressed as the difference of two elements from the subgroup generated by multiplicative commutators. In particular, for the division ring of real quaternions, we show that such elements are precisely those with norm at most $2$ (Theorem~\ref{qu}). We also study analogous questions for matrices over the real quaternions (Theorem~\ref{real}).
				
				\section{Additive subgroups generated by polynomials}\label{sec sub}
				
				Let $R$ be a ring.
				For $x, y\in R$, let $[x, y]:=xy-yx$, the additive commutator of $x$ and $y$.
				Let $X, Y$ be subsets of $R$. We denote by $X^+$ (resp. ${\overline X}$) the additive subgroup (resp. subring) of $R$ generated by $X$, and by
				$
				X\bullet Y:=\{xy\mid x\in X, y\in Y\}.
				$
				For additive subgroups $A, B$ of $R$, let $[A, B]$ (resp. $AB$) denote the additive subgroup of $R$ generated by the elements $[a, b]$ (resp. $ab$) with $a\in A$ and $b\in B$. Thus $(X\bullet Y)^+=X^+Y^+$ for subsets $X, Y$ of $R$, and by $X^{[n]}:=X\bullet\cdots\bullet X$ ($n$-copies).
				Given subsets $X_1,\ldots,X_n$ of $R$ where $n>1$, $(X_1X_2\cdots X_n)^+$ stands for the additive subgroup of $R$ generated by all elements $x_1x_2\cdots x_n$ with $x_i\in X_i$ for all $i$.
				
				A ring $R$ is called a \textit{prime ring} if, for $a, b\in R$, $aRb=0$ implies that either $a=0$ or $b=0$. Given a prime ring $R$, let $Q_s(R)$ denote the Martindale symmetric ring of quotients of $R$. Then $Q_s(R)$ is also a prime ring. The center, denoted by $C$, of $Q_s(R)$ is a field, which is called the \textit{extended centroid} of $R$. The notion of extended centroid plays a key role in the theory of prime rings. We refer the reader to the book \cite{Bei_96} for details. Let $S$ be a ring, $n$ a positive integer. We denote ${\rm M}_n(S)$ the $n$ by $n$ matrix ring over $S$. If $S$ is unital, let $e_{ij}$ denote the standard matrix unit with $1$ in the $(i,j)$-position and $0$ elsewhere. Also, $\mathrm{GF}(2)$ denotes the field with two elements.
				
				We are now ready to state the first main result of this section.
				
				\begin{theorem}\label{thm27}
					Let $R$ be a prime ring with extended centroid $C$, and let $I$ be a nonzero ideal of $R$. Suppose that $f\in C\langle \mathcal{X}\rangle$ is not central-valued on $RC$. Then $(f(I)^2)^+$ contains a nonzero ideal of $R$ except when $R\cong {\rm M}_2({\rm GF}(2))$ and
$$
(f(R)^2)^+=\left\{0, 1, \begin{pmatrix}
						1 & 1 \\
						1 & 0
					\end{pmatrix}, \begin{pmatrix}
						0 & 1 \\
						1 & 1
					\end{pmatrix}\right\}.
$$
				\end{theorem}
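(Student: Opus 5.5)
The strategy is to reduce to a known result on additive subgroups generated by polynomial images in prime rings, and then to use the standard passage from a Lie ideal to an ideal. The key tool is the theorem of Chuang \cite{Pa_Chu_87} and its refinements in \cite{Pa_Lee_22,Pa_Lee_23}. If $f$ is not central-valued on $RC$ and $I$ is a nonzero ideal of $R$, then the additive subgroup $f(I)^+$ contains a noncentral Lie ideal of the form $[J,R]$ for some nonzero ideal $J$ of $R$. The exceptions occur only when $R\cong {\rm M}_2({\rm GF}(2))$, and possibly in characteristic $2$ with $\dim_C RC=4$, where one still gets $[J,J]\subseteq f(I)^+$ or a similar statement. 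I will take this as the starting point. It gives $([J,R]\bullet[J,R])^+\subseteq (f(I)^2)^+$, because $(X\bullet X)^+=X^+X^+$ and $f(I)^+\supseteq [J,R]$.

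The second step shows that $[J,R][J,R]$ contains a nonzero ideal of $R$ whenever $R$ is prime and $[J,R]$ is noncentral. This is a classical Herstein-type argument. Let $L=[J,R]$. For $x,y\in J$ and $r\in R$, we have $[x,yr]=[x,y]r+y[x,r]$, and $[x,y]\in L$. Using identities of this kind, one shows that the subring $\overline{L}$ generated by $L$ contains $R[L,L]R$. Then one checks that already $L^2=LL$ contains $J'[L,L]J'$ for a suitable nonzero ideal $J'$. Concretely, I would prove $[a,b]\,c\,[d,e]\in LL$ for $a,d\in J$ and $b,c,e\in R$ (suitably restricted) by expanding. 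Take $[a,b]c[d,e]=[a,b][c d,e]-[a,b]d[c,e]$, and iterate to move ring elements into commutator positions. The resulting set $J[L,L]J$ generates an ideal that is nonzero by primeness, since $[L,L]\neq 0$ when $L$ is noncentral, unless $R$ is commutative, which is excluded. Alternatively, one can use Herstein's result that for a noncentral Lie ideal $L$ of a prime ring $R$ with ${\rm char}\neq 2$ or $\dim_C RC>4$, the subring $\overline L$ contains a nonzero ideal. One then shows that products of two elements already suffice, because $L\bullet L$ spans the relevant part.

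The main obstacle is the small case: characteristic $2$ with $\dim_C RC=4$, and especially $R\cong{\rm M}_2({\rm GF}(2))$. Here Lie-ideal arguments break down, since $[L,L]$ may be central. I would treat $\dim_C RC=4$ separately. In this case $R$ is a PI ring, so $RC={\rm M}_2(C)$ or a quaternion division algebra, and $I$ contains a nonzero ideal of the form $RC\cap R$-central localizations. One then computes directly. If $C$ has more than $2$ elements, scaling and Vandermonde-type arguments show that $f(I)^+$ contains enough noncentral elements, and their products span a nonzero ideal; in the division case, any nonzero element generates the whole algebra. For $R={\rm M}_2({\rm GF}(2))$, the ring is finite and simple, so the only nonzero ideal is $R$. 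One enumerates the possible additive subgroups $f(R)^+$ that are not central, namely $[R,R]$-type subspaces, of which the trace-zero matrices form a $3$-dimensional space. One checks when $(f(R)^2)^+\neq R$. This fails precisely when $f(R)^+$ is contained in $\{0,1,u,u^2\}\cong{\rm GF}(4)$, with $u=\begin{pmatrix}1&1\\1&0\end{pmatrix}$. That subfield is closed under products, and taking all products then yields exactly the displayed set. Otherwise the products generate all of ${\rm M}_2({\rm GF}(2))$.
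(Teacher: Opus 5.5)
Your outline has the same architecture as the paper's proof. Theorem~\ref{thm4} gives a nonzero ideal $J$ with $[J,R]\subseteq f(I)^+$, hence $[J,R]^2\subseteq (f(I)^2)^+$. One then needs a nonzero ideal inside $[J,R]^2$, and $\mathrm{M}_2(\mathrm{GF}(2))$ is settled by hand.

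\textbf{The gap is the middle step, which carries all the content.} What is needed is exactly Lemma~\ref{lem1}: for a nonabelian Lie ideal $L$ of a prime ring, the additive group $L^2$ generated by products of \emph{two} elements of $L$ already contains a nonzero ideal. The paper applies it to $L=[J,R]$. This $L$ is nonabelian because $J$ is itself a noncommutative prime ring, so $[[J,J],[J,J]]\neq 0$ by Herstein's lemma.

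Your sketch does not establish this. The correct expansion is $[a,b]c[d,e]=[a,b][cd,e]-[a,b][c,e]d$, and its second term lies in $LLR$, not in $LL$. Iterating only moves a ring element to the far right and never closes up inside $LL$. The fallback via Herstein's theorem gives an ideal only inside the subring $\overline{L}$, and only under a characteristic/dimension hypothesis. Your sentence ``products of two elements already suffice'' is precisely the statement that has to be proved.

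Your justification of $[L,L]\neq 0$ is also wrong as stated. In characteristic $2$, the Lie ideal $C\cdot 1+Ce_{12}$ of $\mathrm{M}_2(C)$ is noncentral and abelian. For $L=[J,R]$ nonabelianness does hold, but only via the Herstein argument above.

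\textbf{The small cases also need repair.} Theorem~\ref{thm4} has $\mathrm{M}_2(\mathrm{GF}(2))$ as its only exception, and Lemma~\ref{lem1} has no characteristic restriction. So no separate treatment of characteristic $2$ with $\dim_C RC=4$ is needed. The one you sketch would not work anyway: an element generating a division algebra as an ideal says nothing about the additive group $(f(I)^2)^+$ containing an ideal.

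In $\mathrm{M}_2(\mathrm{GF}(2))$, your claim that $(f(R)^2)^+\neq R$ precisely when $f(R)^+\subseteq\{0,1,u,u^2\}$ is false. Take the other exceptional group of Theorem~\ref{thm4}, $\{0,\,e_{12}+e_{21},\,1+e_{12},\,1+e_{21}\}$. It is not contained in this subfield. Yet each of its nonzero elements squares to $1$, and its mixed products are $u$ or $u^2$ (e.g.\ $(e_{12}+e_{21})(1+e_{12})=u^2$ and $(1+e_{12})(e_{12}+e_{21})=u$). So its products span exactly $\{0,1,u,u^2\}$, not $R$. The theorem survives only because its exception is phrased through $(f(R)^2)^+$, and the paper carries out this computation for both groups.

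Finally, your enumeration must use that $f(R)$ is closed under conjugation by units, or simply Chuang's explicit list. Otherwise $f(R)^+=\{0,u\}$ would be admissible, and then $(f(R)^2)^+=\{0,u^2\}$ is neither $R$ nor the displayed set.
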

				
				To establish Theorem \ref{thm27}, we rely on the following auxiliary results.
				
				\begin{theorem}\label{thm4} {\rm\cite[Theorem 2]{Pa_Cha_01}}
					Let $R$ be a prime ring with extended centroid $C$, and let $I$ be a nonzero ideal of $R$. Suppose that $f\in C\langle \mathcal{X}\rangle$ is not central-valued on $RC$. Then there exists a nonzero ideal $M$ of $R$ such that
					$[M,R]\subseteq f(I)^+,$
					except when $R\cong {\rm M}_2({\rm GF}(2))$ and either
					$
					f(R)^+=\left\{0, \begin{pmatrix}
						0 & 1 \\
						1 & 0
					\end{pmatrix}, \begin{pmatrix}
						1& 1 \\
						0 & 1
					\end{pmatrix},  \begin{pmatrix}
						1& 0 \\
						1 & 1
					\end{pmatrix}\right\}$
or $f(R)^+=\left\{0, 1, \begin{pmatrix}
						1 & 1 \\
						1 & 0
					\end{pmatrix}, \begin{pmatrix}
						0 & 1 \\
						1 & 1
					\end{pmatrix}\right\}.
					$
				\end{theorem}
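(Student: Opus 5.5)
The plan is to show that $A:=f(I)^+$ is, after a suitable reduction, a noncentral Lie ideal of $R$. We then invoke Herstein's theory of Lie ideals of prime rings: a noncentral Lie ideal $L$ of a prime ring $R$ satisfies $[M,R]\subseteq L$ for some nonzero ideal $M$, unless $\operatorname{char}=2$ and $\dim_C RC=4$. The exceptional cases of the statement should then come out of a direct analysis of ${\rm M}_2({\rm GF}(2))$.

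\textbf{Step 1: reduction to the multilinear case.} Let $f=f(x_1,\dots,x_m)$. Passing to the differences
$$f(x_1+y,x_2,\dots)-f(x_1,x_2,\dots)-f(y,x_2,\dots)$$
keeps all values on $I$ inside $A$, and this operation strictly lowers the degree in $x_1$. Iterating over all variables, we reach a multilinear polynomial $g$ with $g(I)\subseteq A$. Some intermediate or final polynomial must remain non-central-valued on $RC$; the key input here is that $RC$ and $IC$ satisfy the same generalized polynomial identities (Chuang's theorem on GPIs of ideals). If the process produced only central-valued polynomials, we could reconstruct $f$ as central-valued modulo homogeneous corrections. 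To isolate homogeneous components we substitute $x_i\mapsto k x_i$ with integers $k$, or $x_i\mapsto c x_i$ for $c$ in $C\cap I$-type multipliers, and use a Vandermonde argument. This requires care when $C$ is small, for instance finite or of positive characteristic.

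\textbf{Step 2: Lie invariance.} For multilinear $g$ and $r\in R$, $a_j\in I$ we have
$$\sum_i g(a_1,\dots,[a_i,r],\dots,a_m)=[g(a_1,\dots,a_m),r],$$
and each $[a_i,r]$ lies in $I$. Hence $L:=g(I)^+\subseteq A$ satisfies $[L,R]\subseteq L$, so $L$ is a Lie ideal of $R$. Since $g$ is not central-valued on $RC$, and by the GPI transfer also not on $I$, the Lie ideal $L$ is noncentral. Herstein's theorem then gives a nonzero ideal $M$ with $[M,R]\subseteq L\subseteq A$, provided $R$ is not a $4$-dimensional central simple algebra in characteristic $2$.

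\textbf{Step 3: the exceptional case.} Suppose $\operatorname{char}R=2$ and $\dim_C RC=4$. When $C$ is infinite, one can still show directly that a noncentral Lie ideal contains $[M,R]$ after enlarging $M$ by $C$-multiples inside $I$. This leaves essentially $R={\rm M}_2({\rm GF}(2))$, a finite ring in which every nonzero ideal is $R$ itself and $[R,R]=\mathfrak{sl}_2$. Here we enumerate the additive subgroups of ${\rm M}_2({\rm GF}(2))$ that arise as $f(R)^+$ for noncentral $f$ and are invariant under conjugation by ${\rm GL}_2({\rm GF}(2))\cong S_3$. Those not containing $\mathfrak{sl}_2=[R,R]$ should be exactly the two listed $2$-dimensional subgroups. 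The first is the span of the nonidentity involutions; the second is $\{0,1,\omega,\omega^2\}\cong{\rm GF}(4)$. Both are realized, for example by suitable polynomials of the form $x^2+x$ and variants.

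The main obstacle will be Step 1: extracting a non-central multilinear polynomial whose image still lies in $A$. Additivity holds only over $\mathbb{Z}$, while the coefficients live in $C$, so the Vandermonde separation of homogeneous components fails for small fields. This is precisely where the ${\rm GF}(2)$ exceptions originate.
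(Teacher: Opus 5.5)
The paper gives no proof of this statement; it quotes it from Chuang's work on the additive subgroup generated by a polynomial. So your outline has to stand on its own. Your Step~2 is correct: the derivation identity makes $g(I)^+$ a Lie ideal, and the Herstein / Bergen--Herstein--Kerr theorem applies. But there are two genuine gaps.

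\textbf{Step 1 fails as designed in positive characteristic.} Since $A=f(I)^+$ is only an additive group, you cannot invert a Vandermonde matrix with coefficients in $C$; only integer combinations of substituted values are allowed.

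In characteristic $0$ integer combinations do suffice:
\begin{itemize}
\item integer Vandermonde gives $\Delta f_\delta(I)\subseteq A$ for some noncentral multihomogeneous component $f_\delta$ and some integer $\Delta\neq0$;
\item the full linearization of $f_\delta$ then works on the nonzero ideal $\Delta I$, since its restitution is a nonzero integer multiple of $f_\delta$.
\end{itemize}
In characteristic $p$ this breaks down. Take $R={\rm M}_2(F)$ with ${\rm char}\,F=2$, $I=R$, and $f=[x_1,x_2]+[x_1,x_2]^2$.
\begin{itemize}
\item By Cayley--Hamilton, $[x_1,x_2]^2$ is central-valued, so $f$ is not central-valued.
\item Any linearization step kills the part $[x_1,x_2]$, which is linear in each variable. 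What remains, e.g.\ $[x_1,x_2][y,x_2]+[y,x_2][x_1,x_2]$, is central-valued, and so is everything obtained from it afterwards.
\item Because $\sum_j n_jc_j^2=(\sum_j n_jc_j)^2$ in characteristic $2$, every integer combination $\sum_j n_jf(c_jx_1,x_2)$ equals $f\big((\sum_j n_jc_j)x_1,x_2\big)$. So scalar substitutions never separate the two pieces.
\end{itemize}
Thus your procedure never produces a noncentral multilinear $g$ with $g(I)\subseteq A$, although here $A=[R,R]$ and the theorem holds. What is missing is a device that removes central contributions without linearizing. For example, when units are available, $uf(a_1,\ldots,a_t)u^{-1}-f(a_1,\ldots,a_t)=f(ua_1u^{-1},\ldots,ua_tu^{-1})-f(a_1,\ldots,a_t)\in A$ cancels central summands. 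You would also need a reduction (central closure, GPI theory) making such a device available for an arbitrary prime ring and a proper ideal $I$.

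\textbf{Step 3 rests on a false claim.} In ${\rm M}_2(F)$ with $F$ infinite of characteristic $2$, the subspace $U=F1+F(e_{12}+e_{21})$ is a noncentral Lie ideal:
\begin{itemize}
\item $[e_{12}+e_{21},e_{11}]=[e_{12}+e_{21},e_{22}]=e_{12}+e_{21}$;
\item $[e_{12}+e_{21},e_{12}]=[e_{12}+e_{21},e_{21}]=1$.
\end{itemize}
Yet $U$ does not contain the $3$-dimensional space $[R,R]$, and since $R$ is simple no choice of $M$ helps. So when ${\rm char}\,R=2$ and $\dim_CRC=4$, the Lie ideal property alone is insufficient however large $C$ is. You must use further structure of $f(I)^+$, such as conjugation invariance, which $U$ lacks: ${\rm diag}(1,t)(e_{12}+e_{21}){\rm diag}(1,t^{-1})=t^{-1}e_{12}+te_{21}$.

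Your reduction to ``essentially ${\rm M}_2({\rm GF}(2))$'' also silently drops two families where the theorem asserts there is no exception, so they must be proved:
\begin{itemize}
\item ${\rm M}_2({\rm GF}(2^k))$ with $k\ge2$, where $f(R)^+$ need not even be a ${\rm GF}(2^k)$-subspace;
\item the non-simple prime rings of this type, with $I$ possibly proper.
\end{itemize}
For ${\rm M}_2({\rm GF}(2))$ itself your enumeration is correct. Since $I=R$ and $C={\rm GF}(2)$, $f(R)^+$ is conjugation invariant, and the invariant subgroups are exactly $0$, ${\rm GF}(2)1$, the two listed sets, $[R,R]$, and $R$. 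However, your remark about $x^2+x$ is wrong: on ${\rm M}_2({\rm GF}(2))$ its values $0,1,e_{12},e_{21},1+e_{12}+e_{21}$ span $[R,R]$. Realizing the exceptions is not needed for the statement anyway.
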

				
				By a \textit{Lie ideal}  of a ring $R$ we mean an additive subgroup $L$ of $R$ satisfying $[L, R]\subseteq L$. A Lie ideal $L$ of $R$ is called \textit{abelian} (resp. \textit{nonabelian})
				if $[L, L]=0$ (resp. $[L, L]\ne 0$).
				
				\begin{lemma}\label{lem1} {\rm\cite[Theorem 3.5]{Pa_Lee_25}}
					Let $L$ be a nonabelian Lie ideal of a prime ring $R$. Then $L^2$ contains a nonzero ideal of $R$.
				\end{lemma}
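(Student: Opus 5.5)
The plan is to reduce to the case where $L$ contains a commutator set $[M,R]$ for a nonzero ideal $M$. After that, a product of two such commutators can be rewritten, via the Leibniz rule, as the generator of an ideal. Throughout, $L^2$ denotes the additive subgroup generated by all $uv$ with $u,v\in L$, and we write $T:=L^2$.

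First I would record two closure properties of $T$. Since $[uv,r]=u[v,r]+[u,r]v$ and $[L,R]\subseteq L$, $T$ is itself a Lie ideal of $R$. Also, $u[v,r]\in T$ and $[u,r]v\in T$ for all $u,v\in L$ and $r\in R$.

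Next I would invoke the classical structure theory of Lie ideals in prime rings (Herstein; Lanski--Montgomery), which I take as known background. A nonabelian Lie ideal $L$ of a prime ring $R$ satisfies $[M,R]\subseteq L$ for some nonzero ideal $M$ of $R$. This holds unless $\operatorname{char}R=2$ and $R$ satisfies the standard identity $s_4$, i.e.\ $RC$ is $4$-dimensional over $C$. Replacing $M$ by $M^2$ if necessary, the problem reduces to showing that $[M,R][M,R]$ contains a nonzero ideal.

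For this I would use the identity
$$[m,xr]=[m,x]r+x[m,r], \qquad m\in M,\ x,r\in R,$$
together with $[mm',r]=m[m',r]+[m,r]m'$. Multiplying on the left by $[n,s]$ with $n\in M$ gives
$$[n,s][m,x]r=[n,s][m,xr]-[n,s]x[m,r].$$
The first term on the right lies in $T$, so this reduces the problem to controlling terms of the form $[n,s]x[m,r]$. Now take $x=[m',y]$ and use that $[n,s][m',y]\in T$ and that $T$ is a Lie ideal. From this I would aim to show that
$$J:=R\,[N,R][N,R]\,R$$
lies in $T$ for a suitable nonzero ideal $N\subseteq M$, for instance $N=M^3$. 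The idea is to absorb each extra ring factor into one commutator slot by means of the displayed identities. The ideal $J$ is nonzero because $[N,R]\neq 0$ (as $R$ is noncommutative) and $R$ is prime: if $[N,R][N,R]=0$, primeness forces $[N,R]=0$, hence $R$ is commutative.

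The main obstacle is this absorption step. Showing that two-sided multiples $r_1[n,s][m,x]r_2$ land in the additive group $T$, rather than merely in the subring generated by $L$, needs a careful choice of $N$. I would first try $N=M^3$ and expand $[m_1m_2m_3,r]$ so that each leftover factor sits inside a commutator.

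The remaining case, $\operatorname{char}R=2$ with $\dim_C RC=4$, I would handle separately by passing to $RC$, a $4$-dimensional central simple algebra, or to a matrix algebra after a field extension. There a nonabelian $L$ contains two elements whose product is invertible in $RC$, and a direct computation shows that $T$ meets $R$ in a nonzero ideal.
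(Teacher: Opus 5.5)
The paper does not prove this lemma; it quotes it from Lee's earlier work. So your argument has to stand on its own, and it does not: its decisive step is missing.

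\textbf{The main gap.} Reducing to ``$[M,R]^2$ contains a nonzero ideal'' does not lower the difficulty. $[M,R]$ is itself a nonabelian Lie ideal (this is exactly how the lemma is used in the proof of Theorem~\ref{thm27}), so that special case is equivalent to the lemma.

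Your Leibniz manipulations only give $[n,s][m,x]r\equiv-[n,s]\,x\,[m,r]$ modulo $T$. That trades the target for a sandwich $u\,x\,v$ with $u,v\in[M,R]$ and $x\in R$ arbitrary. The relations you have ($L[L,R]\subseteq T$, $[L,R]L\subseteq T$, $[T,R]\subseteq T$, and the Leibniz rule) only permute such sandwiches among themselves. For instance, for $a,b\in M$ one checks that $[a,t]\,s\,[b,r]$ modulo $T$ merely changes sign under every transposition of $t,s,r$; nothing you list puts any sandwich into $T$.

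Substituting $x=[m',y]$ handles only $x\in[M,R]$, and it produces elements of $L^3$. Knowing $[n,s][m',y]\in T$ and $[T,R]\subseteq T$ says nothing about $T\cdot L$. So the inclusion $R[N,R][N,R]R\subseteq T$, for $N=M^3$ or any other $N$, is asserted rather than proved, and it is the entire content of the lemma.

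For comparison, the elementary identities $[a,b]r=[a,br]-b[a,r]$ and $r[a,b]=[ra,b]-[r,b]a$, together with the fact that $L^2$ is a Lie ideal, already show that the ideal generated by $[L,L]$ lies in $L+L^2$. The whole difficulty, which Lee's theorem resolves and your proposal does not address, is removing the summand $L$.

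\textbf{The case split is unnecessary, and its exceptional branch is not an argument.} For a nonabelian $L$ there is no characteristic-$2$/$s_4$ exception. In any ring, $\{x\in R : [x,R]\subseteq L\}$ is a subring (since $[xy,r]=[x,yr]+[y,rx]$) and a Lie ideal (by Jacobi) containing $L$. The two identities above show it contains the ideal $M$ generated by $[L,L]$. Hence $[M,R]\subseteq L$ with $M\neq 0$, in every characteristic. The exception you quote matters only for noncentral Lie ideals that may be abelian.

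Even on its own terms, your treatment of that case fails. An element of $T$ that is invertible in $RC$ yields no ideal inside $T$, because $T$ is only an additive subgroup and a Lie ideal. Showing that $T$ absorbs multiplication by $R$ is again the missing step.
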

				
				With these ingredients in place, we can now prove Theorem \ref{thm27}.
				
				\begin{proof}[Proof of Theorem~\ref{thm27}]
					Note that $R$ is noncommutative. The proof is divided into two cases.
					
					\textit{Case 1}: $f(I)^+$ contains a proper Lie ideal of $R$ (that is, a Lie ideal of the form $[M,R]$ with $M$ a nonzero ideal of $R$).
Clearly, $M$ is also a noncommutative prime ring.
					In view of \cite[Lemma~1.5]{Bo_Her_69}, $\big[[M, M], [M, M]\big]\ne 0$ and hence $[M,R]$ is a nonabelian Lie ideal of $R$. Applying Lemma \ref{lem1}, we conclude that $[M,R]^2$ contains a nonzero ideal $K$ of $R$. Consequently,
					$K\subseteq (f(I)^2)^+$, as desired.
					
					\textit{Case 2}: $f(I)^+$ does not contain any proper Lie ideal of $R$. 	Taking Theorem \ref{thm4} into account, it follows that
$I=R\cong {\rm M}_2({\rm GF}(2))$ and either
$$
f(R)^+=\left\{0, \begin{pmatrix}
						0 & 1 \\
						1 & 0
					\end{pmatrix}, \begin{pmatrix}
						1& 1 \\
						0 & 1
					\end{pmatrix},  \begin{pmatrix}
						1& 0 \\
						1 & 1
					\end{pmatrix}\right\}\ \ \text{\rm or}\ \
f(R)^+=\left\{0, 1, \begin{pmatrix}
						1 & 1 \\
						1 & 0
					\end{pmatrix}, \begin{pmatrix}
						0 & 1 \\
						1 & 1
					\end{pmatrix}\right\}.
$$	
If
$f(R)^+=\left\{0, \begin{pmatrix}
						0 & 1 \\
						1 & 0
					\end{pmatrix}, \begin{pmatrix}
						1& 1 \\
						0 & 1
					\end{pmatrix},  \begin{pmatrix}
						1& 0 \\
						1 & 1
					\end{pmatrix}\right\}$,
then a straightforward verification shows that
$$
(f(R)^2)^+=\left\{0, 1, \begin{pmatrix}
						1 & 1 \\
						1 & 0
					\end{pmatrix}, \begin{pmatrix}
						0 & 1 \\
						1 & 1
					\end{pmatrix}\right\}.
$$	
For the second possibility, $f(R)^+=\left\{0, 1, \begin{pmatrix}
						1 & 1 \\
						1 & 0
					\end{pmatrix}, \begin{pmatrix}
						0 & 1 \\
						1 & 1
					\end{pmatrix}\right\}$ and hence, by a direct computation, $f(R)^+=(f(R)^2)^+.$
In either case, we have
$
(f(R)^2)^+=\left\{0, 1, \begin{pmatrix}
						1 & 1 \\
						1 & 0
					\end{pmatrix}, \begin{pmatrix}
						0 & 1 \\
						1 & 1
					\end{pmatrix}\right\}.
$
This completes the proof.
				\end{proof}
				
				Recall that a ring $R$ is called \textit{simple} if $R^2 \neq 0$ and the only ideals of $R$ are $0$ and itself. Note that if $R$ is a simple ring with extended centroid $C$, then $R=RC$ and hence $R$ is a $C$-algebra. Consequently, Theorem~\ref{thm27} immediately leads to the following corollaries.
				
				\begin{corollary}\label{cor21}
					Let $R$ be a simple ring with extended centroid $C$. Suppose that $f \in C\langle \mathcal{X}\rangle$ is not central-valued on $R$. Then
$R = (f(R)^2)^+$ except when $R \cong \text{\rm M}_2(\text{\rm GF}(2))$ and
					$$
					(f(R)^2)^+ =\left\{0, 1, \begin{pmatrix}
						1 & 1 \\
						1 & 0
					\end{pmatrix}, \begin{pmatrix}
						0 & 1 \\
						1 & 1
					\end{pmatrix}\right\}.
					$$
				\end{corollary}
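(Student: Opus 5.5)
We apply Theorem~\ref{thm27} with $I=R$. This is legitimate because a simple ring is prime: if $aRb=0$ with $a\neq 0$, then the ideal generated by $a$ is all of $R$. Since $R^2\neq 0$, this forces $b=0$. A short argument is needed here, since $R$ may be nonunital. The simple ideal structure gives $RaR=R$ whenever $a\ne 0$: the ideal $RaR$ is nonzero because otherwise $a$ would generate an ideal $J$ with $RJR=0$, contradicting $R^2\neq 0$ together with $J=R$. Then $aRb=0$ implies $RaRbR=0$, so $R^2 b R\cdot R=0$, and the same reasoning yields $b=0$.

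As remarked before the corollary, $R=RC$, so $R$ is a $C$-algebra. The hypothesis that $f$ is not central-valued on $R$ is therefore exactly the hypothesis that $f$ is not central-valued on $RC$.

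Theorem~\ref{thm27} now says that either $(f(R)^2)^+$ contains a nonzero ideal of $R$, or we are in the stated exceptional case with $R\cong \mathrm{M}_2(\mathrm{GF}(2))$ and $(f(R)^2)^+$ equal to the displayed four-element set. In the exceptional case there is nothing to prove. In the main case, the nonzero ideal contained in $(f(R)^2)^+$ must be $R$ itself by simplicity, so $R\subseteq (f(R)^2)^+\subseteq R$ and equality holds.

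The only point requiring any care is the primeness of a simple ring in the nonunital setting, which is the routine check above. Everything else is a direct specialization of Theorem~\ref{thm27}.
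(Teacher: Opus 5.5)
Your proof is correct and is essentially the paper's argument: the paper obtains Corollary~\ref{cor21} immediately from Theorem~\ref{thm27} with $I=R$, using $R=RC$ and simplicity to upgrade the nonzero ideal inside $(f(R)^2)^+$ to all of $R$. The paper takes primeness of simple rings for granted; your extra check is fine, though the contradiction there really uses $R^3=R^2=R\neq 0$, and the line ``$R^2bR\cdot R=0$'' should simply read $RbR=0$ (since $RaR=R$).
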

				
				\begin{corollary}\label{cor22}
					Let $D$ be a division ring with center $F$, and let $n$ be a positive integer. Suppose that $f \in F\langle \mathcal{X} \rangle$ is not central-valued on $\mathrm{M}_n(D)$. Then $\mathrm{M}_n(D) = (f(\mathrm{M}_n(D))^2)^+,$
					except when $n=2$, $D$ has exactly two elements, and
					$$
					(f(\mathrm{M}_n(D))^2)^+
					=\left\{0, 1, \begin{pmatrix}
						1 & 1 \\
						1 & 0
					\end{pmatrix}, \begin{pmatrix}
						0 & 1 \\
						1 & 1
					\end{pmatrix}\right\}.
					$$
				\end{corollary}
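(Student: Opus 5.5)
The plan is to derive Corollary~\ref{cor22} from Corollary~\ref{cor21}, applied with $R=\mathrm{M}_n(D)$. The steps are to check that $R$ is simple, identify its extended centroid with $F$, and then translate the exceptional case into conditions on $n$ and $D$.

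First, $\mathrm{M}_n(D)$ is a simple ring: it is unital, so $R^2\neq 0$, and its ideals are of the form $\mathrm{M}_n(J)$ for ideals $J$ of $D$, which are only $0$ and $D$. Its center is $F\cdot 1$, a field. For a simple unital ring, the extended centroid coincides with the center. Indeed, $R$ is its own Martindale symmetric ring of quotients up to central extension, since every nonzero ideal equals $R$. Concretely, an element of $Q_s(R)$ is determined by a bimodule map $R\to R$, and such a map is right multiplication by the central element $\varphi(1)$. Hence $C=F$, and the hypothesis that $f\in F\langle\mathcal{X}\rangle$ is not central-valued on $\mathrm{M}_n(D)$ is exactly the hypothesis of Corollary~\ref{cor21}. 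That corollary gives $\mathrm{M}_n(D)=(f(\mathrm{M}_n(D))^2)^+$, except when $\mathrm{M}_n(D)\cong \mathrm{M}_2(\mathrm{GF}(2))$ and $(f(\mathrm{M}_n(D))^2)^+$ is the displayed four-element set.

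It then remains to interpret the ring isomorphism $\mathrm{M}_n(D)\cong\mathrm{M}_2(\mathrm{GF}(2))$. Since $\mathrm{M}_2(\mathrm{GF}(2))$ is finite, $D$ is finite, and by Wedderburn's little theorem $D$ is a finite field $\mathrm{GF}(q)$. Comparing cardinalities gives $q^{n^2}=16$, so either $(q,n)=(2,2)$ or $(q,n)=(16,1)$. The case $n=1$ is excluded: $\mathrm{M}_1(D)=\mathrm{GF}(16)$ is commutative, whereas $\mathrm{M}_2(\mathrm{GF}(2))$ is not. (Commutativity of $D$ would also make every $f$ central-valued, contradicting the hypothesis.) Hence $n=2$ and $|D|=2$, and the isomorphism may be taken to be the identity. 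This yields exactly the stated exception.

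The only step that needs any care is the identification $C=F$ for simple unital rings, and even that is standard (see \cite{Bei_96}). The rest is bookkeeping.
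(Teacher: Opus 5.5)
Your proposal is correct and follows the paper's route. The paper obtains Corollary~\ref{cor22} directly as the special case $R=\mathrm{M}_n(D)$ of Theorem~\ref{thm27}/Corollary~\ref{cor21}, using that this simple unital ring has extended centroid $F$; your Wedderburn and cardinality argument ($q^{n^2}=16$, with $n=1$ excluded by commutativity) just spells out how the exception $R\cong\mathrm{M}_2(\mathrm{GF}(2))$ becomes $n=2$, $|D|=2$. One wording fix: it is the elements of $C$, not arbitrary elements of $Q_s(R)$, that correspond to bimodule maps $R\to R$, and for a simple unital $R$ one in fact has $Q_s(R)=R$ exactly.
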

				
				In addition, Theorem~\ref{thm27} leads to the following result, for which we first recall a useful notion. Let $R$ be a ring. An additive subgroup $A$ of $R$ is said to be \textit{fully noncentral} if $\widetilde{R}[A,R]\widetilde{R}=R$. Here $\widetilde{R}$ denotes the minimal unitization of $R$: it coincides with $R$ when $R$ is unital, while in the nonunital case it is given by $\widetilde{R}=\mathbb{Z}\times R$, equipped with coordinatewise addition and multiplication defined by $(m,x)(n,y)=(mn,\, my+nx+xy).$
				
				With this terminology in place, we obtain the following theorem.
				
				\begin{theorem}\label{thm31}
					Let $R$ be an algebra over a field $F$ with $|F|>2$. Suppose that $f\in F\langle \mathcal{X} \rangle$ is a polynomial such that $f(R)$ is fully noncentral. Then for every maximal ideal $M$ of $R$ one has
					$R = (f(R)^2)^+ + M.$
				\end{theorem}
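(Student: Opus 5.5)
Fix a maximal ideal $M$ of $R$ and pass to the quotient $\bar R:=R/M$. The statement is then equivalent to $\bar R=(\bar f(\bar R)^2)^+$, where $\bar f$ denotes $f$ evaluated in $\bar R$. Indeed, the canonical map $\pi\colon R\to\bar R$ is a surjective $F$-algebra homomorphism, so $\pi(f(R))=f(\bar R)$: every tuple in $\bar R$ lifts to a tuple in $R$, and polynomial evaluation commutes with $\pi$. Consequently $\pi\big((f(R)^2)^+\big)=(f(\bar R)^2)^+$, and $R=(f(R)^2)^++M$ holds exactly when $\pi$ maps $(f(R)^2)^+$ onto $\bar R$. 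The plan is to apply Corollary~\ref{cor21} to $\bar R$.

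The first step is to check that $\bar R$ is a simple ring in the paper's sense, i.e.\ that $\bar R^2\neq 0$. By maximality of $M$, $\bar R$ has no ideals other than $0$ and $\bar R$. Full noncentrality gives $\widetilde R[f(R),R]\widetilde R=R\not\subseteq M$. Hence $[f(R),R]\not\subseteq M$, which means $[f(\bar R),\bar R]\neq 0$. In particular $\bar R$ is noncommutative, so $\bar R^2\neq0$ and $\bar R$ is simple. The same computation shows that $f$ is not central-valued on $\bar R$. Also, $\bar R$ is an $F$-algebra, and $F$ embeds in the extended centroid $C$ of $\bar R$, which is the center of $\bar R$ (or of its unitization, if needed). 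So $f$ may be regarded as an element of $C\langle\mathcal X\rangle$, and $f(\bar R)$ is the same set whether $f$ is viewed over $F$ or over $C$.

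Corollary~\ref{cor21} then yields $\bar R=(f(\bar R)^2)^+$, unless $\bar R\cong \mathrm M_2(\mathrm{GF}(2))$. This exceptional case is excluded by $|F|>2$. If $\bar R$ were isomorphic to $\mathrm M_2(\mathrm{GF}(2))$, then $F$ would act on $\bar R$ through a unital ring map into its center $\mathrm{GF}(2)$, or possibly by $0$. In the first case $F$ would map into $\mathrm{GF}(2)$, which is impossible since a field homomorphism is injective and $|F|>2$. In the second case $\lambda x=0$ for all $x$ and all $\lambda\in F$, which contradicts $1\cdot x=x$. Lifting back along $\pi$ gives $R=(f(R)^2)^++M$.

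The main obstacle I expect is the bookkeeping when $R$ is nonunital or $f$ has a constant term. One must make sure that the $F$-action passes to $\bar R$ compatibly, and that evaluating $f$ over $F$ versus over $C$ gives the same image. A related point is that $\bar R$ is unital as an $F$-module action even if $\bar R$ itself has no identity, which is what the $|F|>2$ argument needs. The actual verification of $\bar R^2\neq 0$ from full noncentrality is routine.
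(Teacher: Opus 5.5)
Your proposal is correct and follows the paper's own proof: reduce modulo $M$, show that $R/M$ is simple and that $f$ is not central-valued on it, invoke Theorem~\ref{thm27} (you use its simple-ring form, Corollary~\ref{cor21}), use $|F|>2$ to exclude $\mathrm{M}_2(\mathrm{GF}(2))$, and lift back along $\pi$. The differences are cosmetic: you get $\bar R^2\neq 0$ from noncommutativity rather than from $R=R^2$, and you spell out why $|F|>2$ rules out the exceptional case, which the paper leaves implicit; also, for a nonunital simple ring the extended centroid is its centroid, not the center of the unitization, but $F$ still embeds into it via scalar multiplications, so your argument is unaffected.
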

				
				\begin{proof}
					Since $f(R)$ is fully noncentral, we have $\widetilde{R}[f(R),R]\widetilde{R} = R$, and in particular it follows that $R=R^2$. Let $M$ be a maximal ideal of $R$. Then $(R/M)^2 = R/M \neq 0$, and hence $R/M$ is a simple algebra over $F$. Moreover,
$
\widetilde{R/M}[f(R/M),R/M]\widetilde{R/M} = R/M,
$
implying that
$f(R/M)$ is not central-valued on $R/M$, and the assumption $|F|>2$ ensures that the exceptional situation from Theorem~\ref{thm27} does not arise. Taking Theorem~\ref{thm27} into account, it follows that $(f(R/M)^2)^+$ contains a nonzero ideal of $R/M$. Since $R/M$ is simple, this implies that $(f(R/M)^2)^+ = R/M$. Consequently, lifting this equality back to $R$ yields $R = (f(R)^2)^+ + M,$ as required.
				\end{proof}
				
				Continuing along the same line of investigation, we recall a result from \cite[Theorem~3.4]{Pa_Ga_25}. Let $R$ be a unital ring and $f(x)=x \in F\langle \mathcal{X} \rangle$. Then $f(R)$ is fully noncentral if and only if $R=\widetilde R[R, R]\widetilde R$, that is, $R$ is generated by its commutators as an ideal.
				Gardella and Thiel proved that if $R$ is a unital ring, which is generated by its commutators as an ideal, then
				there exists a positive integer $N$ such that every element $x \in R$ can be expressed as
				$$\displaystyle x = \sum_{i=1}^N[a_i, b_i][c_i, d_i]$$ for some $a_i, b_i, c_i, d_i\in R$ (see \cite[Theorem A]{Pa_Ga_25}).
				To encompass our results regarding rings, by a {\it $K$-algebra} $R$ we mean that  $K$ is a unital commutative ring, which is not necessarily a field. In this case, a ring is always an algebra over $\Bbb Z$, the ring of integers.
				Motivated by the theorem, it is natural to ask whether a similar phenomenon occurs in the setting considered here.
				
				\begin{question}\label{qu1}
					Let $R$ be a unital $K$-algebra,  which  is generated by its commutators as an ideal, and let $f \in K\langle \mathcal{X} \rangle$ be a polynomial. Suppose that $R=(f(R)^n)^+$, where $n>1$ is a positive integer.
					Does there exist a positive integer $N$ such that, for $x \in R$, $\displaystyle x =a_1+\cdots+a_N$
					for some elements $a_1, \ldots, a_N \in f(R)^{[n]}:=f(R)\bullet \cdots\bullet  f(R)$ ($n$-copies)?
				\end{question}
				
			Let $L$ and $K$ be Lie ideals of a ring $R$. Then both the product $KL$ and the commutator $[K,L]$ are Lie ideals of $R$. Moreover, the set $RL$ is an ideal of $R$. Indeed, one has
			\[
			RLR \subseteq R([L,R] + RL)=R[L,R] + R^2L \subseteq RL + RL = RL,
			\]
			which shows that $RL$ is closed under multiplication by elements of $R$ on both sides. We also recall the well-known fact that $[A^+, R] = [\overline{A}, R]$ for any subset $A \subseteq R$. To see this claim, note that for all $x,a,b \in R$,
			$
			[x,ab] = [xa,b] + [bx,a],
			$
			which implies that $$[R,ab] \subseteq [R,a] + [R,b].$$ By induction, for any $a_1,\dots,a_m \in A$,
			$\displaystyle
			[R, a_1 a_2 \cdots a_m] \subseteq \sum_{i=1}^m [R,a_i].
			$
			Hence $$[R, A^m] \subseteq [R, A^+],$$ and it follows that $[A^+, R] = [\overline{A}, R]$.
			
			We are now in a position to address Question~\ref{qu1} in the case where the polynomial $f$ is multilinear.
				
				\begin{theorem}\label{thm76}
					Let $R$ be a unital $K$-algebra, and let $f \in K\langle \mathcal{X} \rangle$ be a multinear polynomial.
					Suppose that $R$ is generated by its commutators as an ideal and $R=(f(R)^n)^+$, where $n>1$ is a positive integer. Then there exists $N\in \Bbb N$ such that, for $x\in R$,
					$x=a_1+\cdots+a_N$ for some $a_i\in f(R)^{[n]}$.
				\end{theorem}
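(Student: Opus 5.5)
The plan is to reduce the uniform bound to a fixed finite set of data by exploiting multilinearity, and then to transport that data to an arbitrary $x$ using the Gardella--Thiel theorem \cite[Theorem A]{Pa_Ga_25}. I am confident of the reduction steps; the key transport step (the last paragraph) is the part I do not yet know how to carry out.

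First I will record the elementary closure properties of $f(R)$ coming from multilinearity. Write $f=f(x_1,\dots,x_m)$. For $\lambda\in K$ we have $\lambda f(a_1,\dots,a_m)=f(\lambda a_1,a_2,\dots,a_m)$, so $f(R)$, and hence $f(R)^{[n]}$, is closed under multiplication by $K$. In particular $-f(R)^{[n]}=f(R)^{[n]}$, so an element of $(f(R)^n)^+$ is a finite sum, not merely a signed sum, of elements of $f(R)^{[n]}$. Moreover $f(R)$ is invariant under conjugation by units, since $uf(a_1,\dots,a_m)u^{-1}=f(ua_1u^{-1},\dots,ua_mu^{-1})$. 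Finally, $f$ is additive in each slot separately. These facts will let us replace a single fixed representation by a uniform one.

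Next I will use the hypothesis $R=(f(R)^n)^+$ only finitely often, to produce fixed data. Choose a representation $1=\sum_{j=1}^{N_1}y_j$ with $y_j\in f(R)^{[n]}$. Since $R$ is generated by its commutators as an ideal, \cite[Theorem A]{Pa_Ga_25} gives a uniform $N_0$ such that every $x\in R$ is a sum of $N_0$ elements of the form $[a,b][c,d]$. So it suffices to find a uniform $M$ such that every product $[a,b][c,d]$ is a sum of at most $M$ elements of $f(R)^{[n]}$; then $N=N_0M$ works.

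The main obstacle is exactly this last step: writing a product of two commutators, with $a,b,c,d$ arbitrary, as a boundedly long sum of products of $n$ values of $f$. Here the multilinear hypothesis must be used essentially. My first attempt will be to fix finitely many elements of $R$, obtained from the representation of $1$ and from representations of finitely many auxiliary elements. The aim is to derive an identity in which $a,b,c,d$ enter only as arguments placed into single slots of the $f$-factors. Multilinearity then makes each term an element of $f(R)^{[n]}$, with the number of terms independent of $a,b,c,d$. Concretely, I would insert $1=\sum_j y_j$ around the commutators and try to absorb the outer factors into the $f$-values, using the identity $[R,A^m]\subseteq[R,A^+]$ recalled before the statement together with conjugation invariance. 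If direct absorption fails, the fallback is a Gardella--Thiel-type argument carried out inside the $K$-invariant, conjugation-invariant set $f(R)^{[n]}$ in place of products of commutators. This would mean redoing their bounded-generation proof, using the Lie-ideal facts about $RL$ and $KL$ recorded just before the statement.
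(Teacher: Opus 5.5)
Your proposal has a genuine gap. The step you flag as the main obstacle is essentially the whole content of the theorem, and neither route you sketch closes it.

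The reduction through \cite[Theorem A]{Pa_Ga_25} to products $[a,b][c,d]$ buys nothing. Bounding such a product with $a,b,c,d$ arbitrary is no easier than bounding an arbitrary $x$. More importantly, the fixed datum you choose, $1=\sum_j y_j$ with $y_j\in f(R)^{[n]}$, cannot be transported. Nothing makes $xy_j$ a boundedly long sum of elements of $f(R)^{[n]}$, because ordinary multiplication by an arbitrary $x$ cannot be absorbed into a slot of a multilinear $f$. $K$-homogeneity, conjugation invariance and slot-additivity give no such mechanism. The inclusion $[R,A^m]\subseteq[R,A^+]$ only gives membership in an additive subgroup, with no bound on the number of summands. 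Your fallback of redoing Gardella--Thiel inside $f(R)^{[n]}$ is not an argument yet.

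The missing idea is that an arbitrary element can enter a slot of $f$ only through a commutator, via the derivation identity $[r,f(a_1,\dots,a_t)]=\sum_{k}f(a_1,\dots,[r,a_k],\dots,a_t)$. So the fixed datum should instead be a representation $1=\sum_{i=1}^m w_i[z_i,v_i]$ with $w_i,z_i\in f(R)^{[n-1]}$ and $v_i\in f(R)^{[n]}$.

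\begin{itemize}
\item This representation exists. By \cite[Theorem 3.10]{calugareanu2026}, $R=[R,R]^2$; this is a non-quantitative fact, and no uniform bound is needed. Hence $[R,R]^s=R$ for all $s\ge 2$, and so $R=[R,R]^{n-1}\big[[R,R]^{n-1},R\big]$. Moreover $[R,R]=[R,(f(R)^n)^+]\subseteq[R,\overline{f(R)}]=[R,f(R)^+]\subseteq f(R)^+$. Signs are absorbed since $-f(R)=f(R)$.
\item Since $xw[z,v]=[xwz,v]-[xw,v]z$, every $x\in R$ satisfies $x=\sum_i\big([xw_iz_i,v_i]-[xw_i,v_i]z_i\big)$.
\item Write $v_i=y_1\cdots y_n$ with $y_k\in f(R)$. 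The Leibniz rule plus the derivation identity show that $[r,v_i]$ is a sum of $tn$ elements of $f(R)^{[n]}$; this handles the first term.
\item The telescoping rewrite $[r,y_1\cdots y_n]=-\sum_{k=1}^n[y_k,\,y_{k+1}\cdots y_n\,r\,y_1\cdots y_{k-1}]$ shows that $[r,v_i]$ is also a sum of $tn$ elements of $f(R)$. Hence $[xw_i,v_i]z_i$ is a sum of $tn$ elements of $f(R)^{[n]}$.
\end{itemize}

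This gives the uniform bound $N=2mtn$, where $t$ is the number of variables of $f$ and $m$ is the number of terms in the fixed representation of $1$.
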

				
				\begin{proof}
					Since $R$ is a unital algebra generated by its commutators as an ideal, it follows from \cite[Theorem 3.10]{calugareanu2026} that $R=[R, R]^2$.
					In particular, $R=R^2$ and so $R=R^s$ for any positive integer $s$. We claim that $[R, R]^s=R$ for $s>1$.
					Indeed, it is clear if $s$ is even. Assume that $n=2k+1$ for some positive integer $k$. Then
					$$
					[R, R]^{2k+1}=[R, R]^{2k}[R, R]=R[R, R]\supseteq [R, R]^2=R,
					$$
					as desired.
					
					Let $t$ be the degree of the multilinear polynomial $f(x_1,\ldots,x_t)$.
					Note that, for elements $x, a_1,\ldots,a_t\in R$, we have
					\begin{equation}\label{eq:1}
						[x, f(a_1,\ldots,a_t)]=\sum_{i=1}^t f(a_1,\ldots,[x, a_i],\ldots,a_t).
					\end{equation}
					This implies that $f(R)^+$ is a Lie ideal of $R$. Moreover, the element $[x, f(a_1,\ldots,a_t)]$ is a sum of $t$ elements in $f(R)$.
					We further claim the following facts:\ Let $x\in R$ and $y_1,\ldots,y_n\in f(R)$.
					\begin{itemize}
						\item Claim 1:\ $[x, y_1\cdots y_n]$ is a sum of $tn$ elements of $f(R)^{[n]}$;\vskip4pt
						\item Claim 2:\ $[x, y_1\cdots y_n]$ is a sum of $tn$ elements of $f(R)$;\vskip4pt
						\item Claim 3:\ $R=[R, R]^{n-1}[[R, R]^{n-1}, R]$;\vskip4pt
						\item Claim 4:\ $[R, R]\subseteq f(R)^+$ and $(f(R)^{n-1})^+[(f(R)^{n-1})^+, (f(R)^n)^+]=R$.\vskip4pt
					\end{itemize}
					
					For Claim 1, we have
					$$
					[x, y_1\cdots y_n]=\sum_{i=1}^n( y_1\cdots[x, y_i]\cdots y_n).
					$$
					Since each $[x, y_i]\in [x, f(R)]$ is a sum of $t$ elements of $f(R)$, $[x, y_1\cdots y_n]$ is a sum of $tn$ elements of $f(R)^{[n]}$, as claimed.
					
					For Claim 2, we have
					$$
					[x, y_1\cdots y_n]=-[y_n, xy_1\cdots y_{n-1}]-[ y_{n-1}, y_nxy_1\cdots y_{n-2}]-\cdots-[ y_1, y_2\cdots y_nx].
					$$
					By Eq.(\ref{eq:1}), each of $-[y_n, xy_1\cdots y_{n-1}], -[ y_{n-1}, y_nxy_1\cdots y_{n-2}]$ and $-[ y_1, y_2\cdots y_nx]$ is a sum of $t$ elements of $f(R)$.
					Thus $[x, y_1\cdots y_n]$ is a sum of $tn$ elements of $f(R)$, as claimed.
					
					For Claim 3, since $n>1$, we have $[R, R]^{2(n-1)}=R$. Thus
					\begin{eqnarray*}
						R&=&[R, R]^{n-1}[R, R]\\
						&=&[R, R]^{n-1}\big[[R, R]^{2(n-1)}, R\big]\\
						&\subseteq& [R, R]^{n-1}\big[\overline {[R, R]^{n-1}}, R]\\
						&=& [R, R]^{n-1}\big[[R, R]^{n-1}, R].
					\end{eqnarray*}
					This proves Claim 3.
					For Claim 4, we have
					$$
					[R, R]=[R, (f(R)^n)^+]\subseteq [R, \overline{f(R)}]=[R, f(R)^+]\subseteq f(R)^+.
					$$
					That is, $[R, R]\subseteq f(R)^+$. Then, by Claim 3, we have
					\begin{eqnarray*}
						% \nonumber % Remove numbering (before each equation)
						R&=& [R, R]^{n-1}[[R, R]^{n-1}, R] \\
						& \subseteq & (f(R)^{n-1})^+[(f(R)^{n-1})^+, (f(R)^n)^+],
					\end{eqnarray*}
					proving Claim 4. Taking Claim 4 into account, there exist finitely many $w_i, z_i\in f(R)^{[n-1]}$ and $v_i\in f(R)^{[n]}$, $1\leq i\leq m$, such that
					$
					1 = \sum_{i=1}^mw_i[z_i, v_i].
					$
					Let $x\in R$. Then
					\begin{eqnarray*}
						% \nonumber % Remove numbering (before each equation)
						x &=& \sum_{i=1}^mxw_i[z_i, v_i] \\
						&=& \sum_{i=1}^m\big([xw_iz_i, v_i] - [xw_i, v_i]z_i ). \\
					\end{eqnarray*}
					By Claim 1, each of $[xw_iz_i, v_i]\in [xw_iz_i, f(R)^{[n]}]$, $i=1,\ldots,m$, is a sum of $tn$ elements of $f(R)^{[n]}$. Also,  by Claim 2, each of $ [xw_i, v_i]$, $i=1,\ldots,m$, is a sum of $tn$ elements of $f(R)$. This implies that each of $ [xw_i, v_i]z_i$, $i=1,\ldots,m$, is a sum of $tn$ elements of $f(R)\bullet f(R)^{[n-1]}$ (i.e., $f(R)^{[n]}$).
					Thus $x$ is a sum of $N$ elements of $f(R)^{[n]}$, where $N=2mtn$.
					This completes the proof.
				\end{proof}
				
				We note that if $f$ is not multilinear, the answer to Question \ref{qu1} remains unknown. The following question seems both interesting and important for generalizing Theorem \ref{thm76} to its full generality.
				
				\begin{question}\label{qu2}
					Let $R$ be a unital $K$-algebra. Characterize polynomials $f (x_1,\ldots,x_t)\in K\langle \mathcal{X} \rangle$ satisfying
					(i) $f(R)^+$ is a Lie ideal of $R$, and (ii) there exists a positive integer $N$ such that, for $x, y_1,\ldots,y_t\in R$,
					$[x, f(y_1,\ldots,y_t)] $ is a sum of $N$ elements of $f(R)$.
				\end{question}

				We remark that every multilinear polynomial $f(x_1,\ldots,x_t) \in K\langle \mathcal{X} \rangle$ satisfies conditions (i) and (ii) in Question~\ref{qu2}. When $K$ is an infinite field, it was shown by Bre\v{s}ar and Klep that the linear span of $f(R)$, viewed as a subspace of the $K$-algebra $R$, forms a Lie ideal of $R$ (see \cite[Theorem 2.3]{bresar2009}).
				
				We introduce the following notation. Let $[R,R]^{[1]} := \{[a,b] \mid a,b \in R\}$, and for each integer $n > 1$, define $$
				[R, R]^{[n]}:=\{[a_1, b_1][a_2, b_2]\cdots[a_n, b_n]\mid a_i, b_i\in R\ \forall i\}
				$$ to be the set of all products of $n$ commutators of the form $[a_1,b_1][a_2,b_2]\cdots[a_n,b_n]$, where $a_i,b_i \in R$ for all $i$. In particular, the additive subgroup $[R,R]^n$ is generated by $[R,R]^{[n]}$. Note that
				\[
				[R,R]^{[n-1]} \cdot [R,R]^{[n-1]} = [R,R]^{[2n-2]} \subseteq [R,R]^{2n-2}.
				\]
				
				As a consequence of Theorem~\ref{thm76}, we obtain a generalization of \cite[Theorem A]{Pa_Ga_25}; the special case $n=2$ is established below.
				
				\begin{theorem}\label{thm77}
					Let $R$ be a unital ring, which is generated by commutators as an ideal, and $n>1$  a positive integer. Then $R=[R, R]^n$ and there exists $N\in \Bbb N$ such that, for $x\in R$,
					$x=a_1+\cdots+a_N$ for some $a_i\in [R, R]^{[n]}$.
				\end{theorem}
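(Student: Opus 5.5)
The plan is to apply Theorem~\ref{thm76} with $K=\Bbb Z$ and the multilinear polynomial $f(x_1,x_2)=[x_1,x_2]=x_1x_2-x_2x_1$. Then $f(R)=[R,R]^{[1]}$ and $f(R)^{[n]}=[R,R]^{[n]}$. Once we know $R=(f(R)^n)^+=[R,R]^n$, Theorem~\ref{thm76} gives an $N\in\Bbb N$ such that every $x\in R$ is a sum of $N$ elements of $[R,R]^{[n]}$. So everything reduces to proving the first assertion, $R=[R,R]^n$ for every $n>1$.

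For that, I would follow the opening of the proof of Theorem~\ref{thm76}. Since $R$ is unital and generated by its commutators as an ideal, \cite[Theorem 3.10]{calugareanu2026} gives $R=[R,R]^2$. Then $[R,R]^{2k}=([R,R]^2)^k=R^k=R$, using $R=R^2$, which holds because $R$ is unital. For odd exponents $2k+1\ge 3$ we have
\[
[R,R]^{2k+1}=[R,R]^{2k}[R,R]=R[R,R]\supseteq [R,R]^2=R,
\]
where the inclusion holds because $[R,R]\subseteq R$. Hence $[R,R]^n=R$ for all $n>1$. Here $[R,R]^n$ denotes the additive subgroup generated by $[R,R]^{[n]}$, which is exactly $(f(R)^n)^+$.

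It remains to check the hypotheses of Theorem~\ref{thm76}: $R$ is a unital $\Bbb Z$-algebra generated by commutators as an ideal, $f$ is multilinear of degree $t=2$, and $R=(f(R)^n)^+$. All of these now hold. The constant $N$ produced by that proof is $2mtn=4mn$, where $m$ is the number of terms in an expression $1=\sum_i w_i[z_i,v_i]$.

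The only nontrivial input is the identity $R=[R,R]^2$ from \cite{calugareanu2026}. The case $n=2$ then recovers \cite[Theorem A]{Pa_Ga_25}. I do not expect a genuine obstacle; the one point to watch is that the notational identification $f(R)^{[n]}=[R,R]^{[n]}$ matches the definitions exactly.
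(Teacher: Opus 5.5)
Your proposal is correct and matches the paper's proof essentially step for step. You obtain $R=[R,R]^2$ from \cite[Theorem 3.10]{calugareanu2026} and bootstrap to $R=[R,R]^n$ by the even/odd argument from the proof of Theorem~\ref{thm76}; you then apply Theorem~\ref{thm76} over $K=\mathbb{Z}$ to $f(x_1,x_2)=x_1x_2-x_2x_1$, using $(f(R)^n)^+=[R,R]^n$ and $f(R)^{[n]}=[R,R]^{[n]}$.
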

				
				\begin{proof}
					Since $R$ is a unital ring generated by its commutators as an ideal, it follows from \cite[Theorem 3.10]{calugareanu2026} that $R=[R, R]^2$.
					As given in the proof of Theorem \ref{thm76}, it follows that $R=[R, R]^n$. That is, $R=(f(R)^n)^+$, where $f(x, y):=xy-yx\in \Bbb Z\langle x, y\rangle$.
					It follows from  Theorem \ref{thm76} that, for $x\in R$,
					$x=a_1+\cdots+a_N$ for some $a_i\in f(R)^{[n]}=[R, R]^{[n]}$, completing the proof.
				\end{proof}
				
				We end this section with clarifying the assumption $R=(f(R)^n)^+$ in Theorem \ref{thm76}. A natural question is whether $R=(f(R)^n)^+$ implies that
				$R=\widetilde R[R, R]\widetilde R$.
				In the proof of Theorem \ref{thm76}, Claim 4 asserts that $[R, R]\subseteq f(R)^+$.
				
				The following result connects the structure of $\widetilde{R}[R, R]\widetilde{R}$ with a simple but useful transformation of noncommutative polynomials. In the setting of noncommutative polynomials, it is often convenient to associate to a given polynomial another polynomial obtained by reorganizing the variables appearing in each monomial. More precisely, let $f$ be a polynomial in noncommuting variables. We define a new polynomial $\widetilde{f}$ by rearranging the variables in each monomial of $f$ according to a fixed standard order, typically $x_1, x_2, \ldots, x_m$, while preserving the multiplicity with which each variable appears.  For instance, if $f(x_1, x_2, x_3) = x_2^2 x_1 x_3^6 x_1 - x_3 x_1 x_2^2 x_3^5$, we obtain $\widetilde{f}(x_1, x_2, x_3) = x_1^2 x_2^2 x_3^6 - x_1 x_2^2 x_3^6.$
				
				For $a, b\in R$, we denote $a\equiv b$ if $a-b\in \widetilde{R}[R, R]\widetilde{R}$. Let $A, B$ be additive subgroups of $R$. We denote $A\equiv B$
				if and only if $A+\widetilde{R}[R, R]\widetilde{R}=B+\widetilde{R}[R, R]\widetilde{R}$. That is, every element of $A$ is equivalent to some element of $B$, and conversely.
				
				\begin{theorem}\label{thm51}
					Let $R$ be a $K$-algebra, and let $f \in F\langle \mathcal{X} \rangle$ be a polynomial. Then, for all $a_i \in R$,
					$f(a_1, a_2, \ldots, a_m) \in \widetilde{R}[R, R]\widetilde{R}$ if and only if
					$\widetilde{f}(a_1, a_2, \ldots, a_m) \in \widetilde{R}[R, R]\widetilde{R}.$
				\end{theorem}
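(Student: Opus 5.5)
Set $J:=\widetilde{R}[R,R]\widetilde{R}$. The plan is to show that $f(a_1,\ldots,a_m)\equiv \widetilde{f}(a_1,\ldots,a_m)$ for all $a_i\in R$, i.e.\ that their difference lies in $J$. The stated equivalence then follows at once, since $J$ is an ideal and hence an additive subgroup: one element lies in $J$ if and only if the other does.

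\textbf{Step 1: $J$ is a $K$-submodule.} Write $f$ as a $K$-linear combination of monomials $\lambda\, x_{i_1}x_{i_2}\cdots x_{i_k}$. The map $f\mapsto\widetilde{f}$ acts monomial by monomial, reordering each $x_{i_1}\cdots x_{i_k}$ into the standard sorted word with the same multiplicities. The $K$-multiples of elements of $J$ must stay in $J$, and they do: for $\lambda\in K$ we have $\lambda[x,y]=[\lambda x,y]$, so $\lambda J\subseteq J$. (For a constant term, when $R$ is unital, the monomial is unchanged.)

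\textbf{Step 2: reduction to single monomials.} By Step 1 it suffices to prove, for a single monomial, that
\[
a_{i_1}a_{i_2}\cdots a_{i_k}\equiv a_{j_1}a_{j_2}\cdots a_{j_k},
\]
where $(j_1,\ldots,j_k)$ is the sorted rearrangement of $(i_1,\ldots,i_k)$.

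\textbf{Step 3: adjacent transpositions.} Any rearrangement of a finite word is a product of adjacent transpositions, so it is enough to swap two neighbouring letters. For elements $u,v\in\widetilde{R}$ and $b,c\in R$ we have
\[
u\,bc\,v-u\,cb\,v=u[b,c]v\in J.
\]
If the swapped pair sits at the beginning or the end of the word, take $u=1$ or $v=1$ in $\widetilde{R}$. This is exactly why the minimal unitization appears in the definition of $J$.

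\textbf{Step 4: conclusion.} Since $\equiv$ is transitive (it is congruence modulo an additive subgroup), iterating Step 3 gives the congruence of Step 2. Summing over monomials with coefficients, using Step 1, yields $f(a)\equiv\widetilde{f}(a)$. The claimed equivalence follows.

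The only delicate point is bookkeeping. One must make sure the coefficient ring acts compatibly; there is a minor notational mismatch between $F$ and $K$, which I read as $f\in K\langle\mathcal{X}\rangle$. One must also handle the non-unital case via $\widetilde{R}$. There is no genuine obstacle: the statement is essentially that $R/J$ is commutative, and evaluating in a commutative ring makes $f$ and $\widetilde{f}$ coincide.
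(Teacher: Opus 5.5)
Your proposal is correct and follows essentially the same approach as the paper. The paper also swaps adjacent out-of-order variables via $g(a)a_ja_ih(a)=g(a)a_ia_jh(a)+g(a)[a_j,a_i]h(a)$, repeating until $f(a)-\widetilde{f}(a)\in\widetilde{R}[R,R]\widetilde{R}$. Your extra bookkeeping (closure under $K$-scalars, using $1\in\widetilde{R}$ for empty prefixes or suffixes, and reading $f\in K\langle\mathcal{X}\rangle$) makes explicit what the paper leaves tacit.
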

				
				\begin{proof}
					Let $g(x_1,\ldots,x_m)x_jx_ih(x_1,\ldots,x_m)$, $i<j$, be a monomial occurring in $f$. For $a_1,\ldots,a_m\in R$, we have
					\begin{eqnarray*}
						% \nonumber % Remove numbering (before each equation)
						&&g(a_1,\ldots,a_m)a_ja_ih(a_1,\ldots,a_m)\\
						&=& g(a_1,\ldots,a_m)a_ia_jh(a_1,\ldots,a_m)+g(a_1,\ldots,a_m)[a_j, a_i]h(a_1,\ldots,a_m) \\
						&\equiv&  g(a_1,\ldots,a_m)a_ia_jh(a_1,\ldots,a_m).
					\end{eqnarray*}
					By repeating the same process above,  after finitely many times we can arrive at $$f(a_1, a_2, \ldots, a_m) - \widetilde{f}(a_1, a_2, \ldots, a_m) \in \widetilde{R}[R, R]\widetilde{R},$$ that is,
					$f(a_1, a_2, \ldots, a_m) \in \widetilde{R}[R, R]\widetilde{R}$ if and only if
					$\widetilde{f}(a_1, a_2, \ldots, a_m) \in \widetilde{R}[R, R]\widetilde{R}.$
				\end{proof}

Using the connection with $\widetilde{R}[R, R]\widetilde{R}$, we continue in the spirit of Theorem~\ref{thm76}, as detailed below.

	\begin{theorem}\label{thm80}
	Let $R$ be a unital $K$-algebra and let $\displaystyle f=\sum_{\sigma\in \text{\rm Sym}(m)}\lambda_{\sigma}x_{\sigma(1)}\cdots x_{\sigma(m)}\in K\langle \mathcal{X} \rangle$ with $\displaystyle \sum_{\sigma\in \text{Sym}(m)}\lambda_{\sigma}=0$.
	If $R=(f(R)^n)^+$, where $n>1$ is a positive integer, then there exists $N\in \Bbb N$ such that, for $x\in R$,
	$x=a_1+\cdots+a_N$ for some $a_i\in f(R)^{[n]}$.
\end{theorem}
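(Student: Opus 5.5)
The plan is to reduce the statement to Theorem~\ref{thm76}. The polynomial $f=\sum_{\sigma}\lambda_\sigma x_{\sigma(1)}\cdots x_{\sigma(m)}$ is multilinear in $x_1,\ldots,x_m$, and $R$ is unital with $R=(f(R)^n)^+$ for some $n>1$. These are already hypotheses of Theorem~\ref{thm76}. The only missing hypothesis is that $R$ is generated by its commutators as an ideal, i.e.\ $R=\widetilde R[R,R]\widetilde R$. I will extract this from the condition $\sum_\sigma\lambda_\sigma=0$ via Theorem~\ref{thm51}.

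First, apply the rearrangement $f\mapsto\widetilde f$. Every monomial $x_{\sigma(1)}\cdots x_{\sigma(m)}$ becomes $x_1x_2\cdots x_m$ after sorting the variables into the standard order. Hence
\[
\widetilde f=\Big(\sum_{\sigma\in \mathrm{Sym}(m)}\lambda_\sigma\Big)x_1\cdots x_m=0 .
\]
Consequently $\widetilde f(a_1,\ldots,a_m)=0\in\widetilde R[R,R]\widetilde R$ for all $a_i\in R$. Theorem~\ref{thm51} then gives $f(a_1,\ldots,a_m)\in\widetilde R[R,R]\widetilde R$ for all $a_i$.

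Theorem~\ref{thm51} is stated with coefficients in $F$, but its proof only uses the identity $g\,a_ja_i\,h=g\,a_ia_j\,h+g\,[a_j,a_i]\,h$ together with $K$-linearity. So it applies verbatim to $f\in K\langle\mathcal X\rangle$. Equivalently, one can check directly that $f(a)-\widetilde f(a)$ is a $K$-combination of elements of the form $g[a_j,a_i]h$.

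Write $J:=\widetilde R[R,R]\widetilde R$, which is a two-sided ideal of $R$ and is closed under $K$-scalars. We have shown $f(R)\subseteq J$. Since $J$ is an ideal, every product $y_1\cdots y_n$ with $y_i\in f(R)$ lies in $J$, so $(f(R)^n)^+\subseteq J$. The hypothesis $R=(f(R)^n)^+$ therefore forces $R=J$, i.e.\ $R$ is generated by its commutators as an ideal.

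All hypotheses of Theorem~\ref{thm76} now hold, and it yields $N\in\Bbb N$ such that every $x\in R$ is a sum of $N$ elements of $f(R)^{[n]}$. By the proof of Theorem~\ref{thm76}, one may take $N=2mtn$ with $t=m$ the degree of $f$ and $m$ the number of terms in a representation of $1$.

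There is no serious obstacle. The only point needing care is justifying Theorem~\ref{thm51} for coefficients in the commutative ring $K$ rather than a field, which is immediate from its proof. The degenerate case $m=1$ is harmless: there $f=0$, so $R=(f(R)^n)^+=0$, which is excluded for a unital ring unless the statement holds trivially.
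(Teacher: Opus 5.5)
Your proposal is correct and follows the paper's proof. The paper cites Lemma~\ref{lem3} instead of Theorem~\ref{thm51}, but that lemma is exactly your rearrangement computation $f(a_1,\ldots,a_m)\equiv \mu\, a_1\cdots a_m$ with $\mu=0$; it then deduces $R=\widetilde R[R,R]\widetilde R$ from $R=(f(R)^n)^+$ and applies Theorem~\ref{thm76}, as you do (your aside on $N=2mtn$ uses the letter $m$ for two different things, which is harmless).
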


The proof of Theorem~\ref{thm80} relies on the following lemma.

				\begin{lemma}\label{lem3}
					Let $R$ be a $K$-algebra, and let $\displaystyle f=\sum_{\sigma\in \text{\rm Sym}(m)}\lambda_{\sigma}x_{\sigma(1)}\cdots x_{\sigma(m)}\in K\langle \mathcal{X} \rangle$.
					Then $f(R)^+\equiv \mu R^m$, where $\displaystyle\mu= \sum_{\sigma\in \text{Sym}(m)}\lambda_{\sigma}$. In addition, if $\mu=0$, then $f(R)^+\subseteq
					\widetilde{R}[R, R]\widetilde{R}$.
				\end{lemma}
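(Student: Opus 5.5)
The plan is to reduce everything to Theorem~\ref{thm51}, applied monomial by monomial. Here $R^m$ is read as the additive subgroup generated by $m$-fold products (in the paper's notation, $(R\bullet\cdots\bullet R)^+$), and $\mu R^m$ as $\{\mu y\mid y\in R^m\}$, or its additive span. The relation $\equiv$ means equality modulo the ideal $J:=\widetilde{R}[R,R]\widetilde{R}$. The key observation is that for the multilinear $f$, the rearranged polynomial $\widetilde f$ equals $\mu\, x_1x_2\cdots x_m$, because every monomial $x_{\sigma(1)}\cdots x_{\sigma(m)}$ reorders to $x_1\cdots x_m$. The proof of Theorem~\ref{thm51} shows more than its statement: it gives $f(a_1,\ldots,a_m)-\widetilde f(a_1,\ldots,a_m)\in J$, since each transposition of adjacent factors changes a product by an element $g[a_j,a_i]h\in J$. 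I will cite that congruence directly. Hence for all $a_i\in R$,
$$
f(a_1,\ldots,a_m)\equiv \mu\, a_1a_2\cdots a_m .
$$

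Next I establish the two inclusions modulo $J$. For $f(R)^+ + J\subseteq \mu R^m + J$: a sum $\sum_k \pm f(a^{(k)}_1,\ldots,a^{(k)}_m)$ is congruent to $\mu\sum_k \pm a^{(k)}_1\cdots a^{(k)}_m$, and the latter lies in $\mu R^m$. For the converse: an element $\mu\sum_k \pm b^{(k)}_1\cdots b^{(k)}_m$ is congruent to $\sum_k \pm f(b^{(k)}_1,\ldots,b^{(k)}_m)\in f(R)^+$. Both directions use only that $J$ is an additive subgroup, indeed an ideal. This gives $f(R)^+\equiv \mu R^m$.

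For the second assertion, if $\mu=0$ then every $f(a_1,\ldots,a_m)\equiv 0$, so $f(R)\subseteq J$. Since $J$ is an additive group, $f(R)^+\subseteq J$.

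I do not expect a real obstacle. The one point needing care is justifying the congruence $f-\widetilde f\in J$ pointwise rather than just the "iff" statement of Theorem~\ref{thm51}. If needed, I would reprove it by writing any permutation as a product of adjacent transpositions and inducting on the number of inversions of $\sigma$, each step contributing a term $g(a)[a_j,a_i]h(a)\in J$. I also need the harmless fact that the coefficients $\lambda_\sigma\in K$ pass through, since $J$ is a $K$-submodule because $[R,R]$ is.
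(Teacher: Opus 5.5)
Your proposal is correct and follows the paper's argument. The paper also reduces each monomial $a_{\sigma(1)}\cdots a_{\sigma(m)}$ to $a_1\cdots a_m$ modulo $\widetilde{R}[R,R]\widetilde{R}$ (the adjacent-swap step from the proof of Theorem~\ref{thm51}), obtaining $f(a_1,\ldots,a_m)\equiv \mu\, a_1\cdots a_m$ and reading off both assertions; you simply make explicit the two inclusions and the $K$-stability of the ideal that the paper treats as ``clear.''
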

				
				\begin{proof}
					Let $a_1,\ldots,a_m\in R$. Clearly, we have
					$$
					f(a_1,\ldots,a_m)= \sum_{\sigma\in \text{\rm Sym}(m)}\lambda_{\sigma}a_{\sigma(1)}\cdots a_{\sigma(m)}\equiv \mu a_1\cdots a_m.
					$$
					Thus $f(R)^+\equiv \mu R^m$. In addition, if $\mu=0$ then $f(R)^+\equiv  \{0\}$ and so $f(R)^+\subseteq
					\widetilde{R}[R, R]\widetilde{R}$, as claimed.
				\end{proof}
				
Equipped with Lemma~\ref{lem3}, we now turn to the proof of Theorem~\ref{thm80}.

				\begin{proof}[Proof of Theorem~\ref{thm80}]
					Taking Lemma~\ref{lem3} into account, we obtain $f(R)^+ \subseteq \widetilde{R}[R,R]\widetilde{R}$. Moreover, since $f(R) \subseteq f(R)^+$, it follows that
					$
					f(R) \subseteq \widetilde{R}[R,R]\widetilde{R},
					$
					and consequently,
					$
					(f(R)^n)^+ \subseteq \widetilde{R}[R,R]\widetilde{R}.
					$
					As $R = (f(R)^n)^+$, we deduce that
					$
					R \subseteq \widetilde{R}[R,R]\widetilde{R}.
					$
					The reverse inclusion is immediate, and hence
					$
					R = \widetilde{R}[R,R]\widetilde{R}.
					$
					In other words, $R$ is generated, as an ideal, by its commutators. Consequently, by Theorem~\ref{thm76}, there exists $N \in \mathbb{N}$ such that every $x \in R$ can be expressed in the form
					$
					x = a_1 + \cdots + a_N,
					$
					where each $a_i \in f(R)^{[n]}$, as required.
				\end{proof}
				
				Given a positive integer $m>1$, we let
				$$
				S_m(x_1,\ldots,x_m)=\sum_{\sigma\in \text{\rm Sym}(m)}(-1)^{\sigma}x_{\sigma(1)}\cdots x_{\sigma(m)}\in \Bbb Z\langle \mathcal{X} \rangle.
				$$
				denote the standard polynomial of degree $m$. Clearly, we have $\displaystyle\sum_{\sigma\in \text{\rm Sym}(m)}(-1)^{\sigma}=0$. We also note that, in a unital ring $R$, the equality $R=[R, R]^2$ if and only if $R=\widetilde R[R, R]\widetilde R$ (see \cite[Theorem 3.10]{calugareanu2026}).
				
				The following result, which generalizes Theorem \ref{thm76}, is an immediate consequence of Theorem~\ref{thm80}.
				
				\begin{theorem}\label{thm81}
					Let $R$ be a unital ring.
					If $R=(S_m(R)^n)^+$, where $m>1$ and $n>1$, then there exists $N\in \Bbb N$ such that, for $x\in R$,
					$x=a_1+\cdots+a_N$ for some $a_i\in S_m(R)^{[n]}$.
				\end{theorem}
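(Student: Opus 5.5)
This is a direct specialization of Theorem~\ref{thm80}. Take $K=\Bbb Z$, so that the unital ring $R$ is regarded as a unital $\Bbb Z$-algebra, and take $f=S_m$. This is a polynomial of the form required in Theorem~\ref{thm80}, with coefficients $\lambda_\sigma=(-1)^{\sigma}\in\Bbb Z$.

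The hypothesis of Theorem~\ref{thm80} is that the coefficients sum to zero, that is, $\mu=\sum_{\sigma\in \mathrm{Sym}(m)}(-1)^{\sigma}=0$. For $m>1$, the transposition $(1\,2)$ gives a bijection between even and odd permutations in $\mathrm{Sym}(m)$, so there are $m!/2$ of each and $\mu=0$. This is the only place where $m>1$ is used; for $m=1$ one would have $\mu=1$.

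With $\mu=0$ and the assumption $R=(S_m(R)^n)^+$ with $n>1$, Theorem~\ref{thm80} applies verbatim. It yields $N\in\Bbb N$ such that every $x\in R$ can be written as $x=a_1+\cdots+a_N$ with $a_i\in S_m(R)^{[n]}$, which is exactly the statement.

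It is worth recording the mechanism, since it shows where each hypothesis enters. Lemma~\ref{lem3} gives $S_m(R)^+\subseteq \widetilde R[R,R]\widetilde R$. Hence $R=(S_m(R)^n)^+\subseteq \widetilde R[R,R]\widetilde R$, so $R$ is generated by its commutators as an ideal. Theorem~\ref{thm76} then applies to the multilinear polynomial $S_m$, producing the uniform bound $N=2\ell t n$ with $t=m$.

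There is no genuine obstacle here. The only points to check are that Theorem~\ref{thm80} is stated over an arbitrary unital commutative coefficient ring $K$, so that $K=\Bbb Z$ is permitted, and that $S_m$ is multilinear, as Theorem~\ref{thm76} requires in the underlying argument. Both hold by the paper's conventions on $K$-algebras, under which every ring is a $\Bbb Z$-algebra.
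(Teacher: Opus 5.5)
Your proposal is correct and is exactly the paper's argument: the paper obtains Theorem~\ref{thm81} as an immediate consequence of Theorem~\ref{thm80}, applied with $K=\mathbb{Z}$ and $f=S_m$, using that $\sum_{\sigma}(-1)^{\sigma}=0$ for $m>1$. Your unpacking through Lemma~\ref{lem3} and Theorem~\ref{thm76} just retraces the paper's own proof of Theorem~\ref{thm80}.
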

				
				\section{Products of additive commutators}\label{sec product}

				In light of Theorem~\ref{thm77} and recent studies, we are concerned with the question of whether every matrix over a given ring can be written as a product of additive commutators. This problem has a long history, beginning with the case where \( R \) is a field. In that setting, it was shown by Botha in \cite[Theorem 4.1]{Pa_Botha_97} that every matrix in \( \mathrm{M}_n(F) \) is a product of two  commutators; for fields of characteristic zero, the result had appeared even earlier in \cite{Pa_Wu_89} by Wu. More recently, the problem has been extended to noncommutative settings. When \( R = D \) is a division ring, it was shown in \cite[Theorem ~4.4]{Pa_SoDu_25} that every element of \( \mathrm{M}_n(D) \) can be expressed as a product of at most seven commutators. This bound was subsequently improved to two under the additional assumption that the center of \( D \) is infinite \cite[Theorem ~4.4]{Pa_Bre_25}. Interestingly, when \( n = 2 \) or when the matrix is singular, the assumption on the center can be dropped altogether \cite[Propositions~4.2 and~4.7]{Pa_Bre_25}. Moreover, the question has been examined in more general rings beyond division rings. In particular, for rings of Bass stable rank one, every matrix is a product of three commutators \cite[Theorem 3.9]{Pa_Bre_25}. However, the phenomenon does not hold universally: there exist commutative unital rings \( R \) and matrices in \(  \mathrm{M}_2(R) \) that cannot be written as a product of commutators \cite[Theorem~2.2]{Pa_Bre_25}.
				
				Taken together, these results highlight an intriguing open question: does every nonsingular matrix over a division ring with finite center admit a factorization as a product of two  commutators? This section is devoted to further exploring this problem and related directions.
				
				We assume familiarity with basic ring theory. Throughout this section, $D$ denotes a division ring with center $F$, and $\mathrm{M}_n(D)$ denotes the ring of $n\times n$ matrices over $D$, where $n\geq1$ is an integer. More generally, for any positive integers $t$ and $m$, we use $\mathrm{M}_{t \times m}(D)$ to refer to the set of all $t \times m$ matrices over $D$.  In addition, we write $\mathrm{LT}_n(D)$ and $\mathrm{UT}_n(D)$ for the sets of lower and upper unitriangular matrices in $\mathrm{M}_n(D)$, respectively; these are the lower (respectively, upper) triangular matrices with all diagonal entries equal to~$1$. The notation $\mathrm{I}_n$ is used for the identity matrix in the algebra $\mathrm{M}_n(D)$.

				Let \( A = (a_{ij}) \in \mathrm{M}_k(D) \) and \( B = (b_{ij}) \in \mathrm{M}_h(D) \), where \( k \) and \( h \) are positive integers. We define their \emph{block diagonal sum} as
				\[
				A \oplus B := \begin{pmatrix}
					A & 0 \\
					0 & B
				\end{pmatrix} \in \mathrm{M}_{k+h}(D),
				\]
				that is, the block diagonal matrix with \(A\) and \(B\) along the diagonal and zeros elsewhere. More generally, for a finite family of matrices \( \{ A_i \in \mathrm{M}_{n_i}(D) \mid 1 \leq i \leq s \} \), where each \(n_i\) is a positive integer, we write
				\[
				\bigoplus_{i=1}^s A_i := A_1 \oplus A_2 \oplus \cdots \oplus A_s
				\]
				for their successive block diagonal sum. In particular, if each \( A_i \in D \cong \mathrm{M}_1(D) \), we use the notation
				\[
				\mathrm{diag}(A_1, \dots, A_s) := \bigoplus_{i=1}^s A_i.
				\]

				For the reader’s convenience, we reiterate the recent results on $\mathrm{M}_n(D)$ that have been mentioned above, based on \cite[Theorem 4.1]{Pa_Botha_97}, and \cite[Lemma 4.3, Theorems 3.9 and 4.4, Propositions 4.2 and 4.7]{Pa_Bre_25}.
				
				\begin{theorem}\label{recent}
					Let $D$ be a division ring with center $F$, and let $n \geq 2$ be an integer. Then every matrix $A \in \mathrm{M}_n(D)$ can be written as a product of at most three commutators in $\mathrm{M}_n(D)$, and of two commutators in $\mathrm{M}_n(D)$ if one of the following holds:
					\begin{enumerate}[\rm (i)]
						\item $D = F$;
						\item $n = 2$;
						\item  $A=\lambda\mathrm{I}_n$ for some $\lambda\in F$ and $D$ contains at least three elements;
						\item $A$ is singular;
						\item $D \ne F$ and $F$ is infinite;
						\item $D \ne F$ and $\dim_F D < \infty$.
					\end{enumerate} Furthermore, in cases (ii), (iii), (v), and (vi), the first factor turns out to be nonsingular.
				\end{theorem}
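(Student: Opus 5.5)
This theorem collects results already in the literature, so I will not prove anything from scratch. The plan is to match each clause of the statement to the cited results: \cite[Theorem 4.1]{Pa_Botha_97}, and \cite[Lemma 4.3, Theorems 3.9 and 4.4, Propositions 4.2 and 4.7]{Pa_Bre_25}.

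First, the general bound of three commutators. A division ring $D$ has Bass stable rank one, so \cite[Theorem 3.9]{Pa_Bre_25} gives that every matrix in $\mathrm{M}_n(D)$ is a product of at most three commutators. Next I will check the special cases one at a time.
\begin{enumerate}[\rm (i)]
\item When $D=F$ is a field, this is Botha's theorem \cite[Theorem 4.1]{Pa_Botha_97}.
\item For $n=2$, the claim is \cite[Proposition 4.2]{Pa_Bre_25}.
\item For scalar matrices $\lambda\mathrm{I}_n$ with $|D|\ge 3$, the claim is \cite[Lemma 4.3]{Pa_Bre_25}.
\item For singular $A$, the claim is \cite[Proposition 4.7]{Pa_Bre_25}.
\item When $D\neq F$ and $F$ is infinite, the claim is \cite[Theorem 4.4]{Pa_Bre_25}.
\end{enumerate}

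Case (vi) is the one place where a short argument is needed. Suppose $D\ne F$ and $\dim_F D<\infty$. If $F$ is infinite, we are already in case (v). If $F$ is finite, then $D$ is a finite division ring, and by Wedderburn's little theorem $D=F$. This contradicts $D\ne F$, so case (vi) reduces entirely to case (v).

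Finally, the clause ``the first factor is nonsingular'' in cases (ii), (iii), (v) and (vi). I will read this off from the constructions in the cited proofs. In \cite[Proposition 4.2, Lemma 4.3, Theorem 4.4]{Pa_Bre_25}, the matrix is first brought by similarity to a suitable normal form, such as a cyclic or companion-type decomposition. It is then written as $[X,Y][Z,W]$, where the first commutator is invertible; typically it is a scalar-shifted or companion-type commutator with nonzero determinant. Case (vi) inherits this property from (v) by the reduction above.

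The main obstacle is this bookkeeping: confirming from each cited construction that the first factor can indeed be taken nonsingular. I would check that step by step against the proofs in \cite{Pa_Bre_25}, rather than expect it to follow from the statements alone.
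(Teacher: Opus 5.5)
Your proposal is essentially the paper's own treatment: the paper gives no independent proof. It simply assembles \cite[Theorem 4.1]{Pa_Botha_97} with \cite[Lemma 4.3, Theorems 3.9 and 4.4, Propositions 4.2 and 4.7]{Pa_Bre_25}, as you do, and your Wedderburn reduction of (vi) to (v) is exactly why the finite-dimensional case is \cite[Corollary~4.5]{Pa_Bre_25}, a consequence of \cite[Theorem~4.4]{Pa_Bre_25}.

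One minor correction concerns your guess at where the nonsingular first factor comes from. In the argument the paper adapts from \cite[Theorem~4.4]{Pa_Bre_25} (see the proof of Theorem~\ref{q>n}), that factor is the inductively built block commutator $\mathrm{diag}([d_1,d_2],[E_1,E_2])$, not a companion-type factor. It is invertible because $[d_1,d_2]\neq 0$ and $[E_1,E_2]$ is invertible by induction.
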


				Theorem~\ref{recent} reduces the discussion to a single outstanding case: $n\geq3$, $A$ is nonsingular, $D \ne F$, the center $F$ is finite, and $D$ is infinite-dimensional over $F$. This exceptional situation will be the primary focus of what follows. To proceed, we formulate the following guiding question:
				
				\begin{question}\label{que}
					Let $n\geq3$ be an integer, and let \( D \) be a division ring with finite center \( F \), such that \( D \) is infinite-dimensional over \( F \), and let \( A \in \mathrm{M}_n(D) \) be a nonsingular matrix. Can \( A \) be written as a product of two  commutators in \( \mathrm{M}_n(D) \)?
				\end{question}
				
				Moreover, division rings satisfying the conditions in Question~\ref{que} do exist. For concrete examples and further details, we refer the reader to \cite[$\S4$, Example~3]{Pa_ChuLee_79} and \cite[Proposition~2.3.5]{Bo_Cohn_95}.
				
				\subsection{On the finiteness of the center of a division ring}
				
				We now turn to the case where the center of the division ring is finite, in line with the focus stated in Question~\ref{que}. In particular, we establish the following result.
				
				\begin{theorem} \label{q>n}
					Let $D$ be a division ring with center $F$, and let $n \ge 2$ be an integer. Suppose $F$ is a finite field with $q$ elements, where $q > n$. Then every matrix in $\mathrm{M}_n(D)$ can be expressed as a product of two commutators in $\mathrm{M}_n(D)$.
				\end{theorem}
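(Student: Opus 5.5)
By Theorem~\ref{recent}, it suffices to treat a nonsingular $A\in\mathrm{M}_n(D)$ with $n\ge 3$, $D\neq F$ and $F$ finite. Cases (i)--(iv) of Theorem~\ref{recent} dispose of $D=F$, $n=2$, scalar $A$ and singular $A$.

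The plan is to adapt the argument behind case (v) of Theorem~\ref{recent}, i.e.\ \cite[Theorem~4.4]{Pa_Bre_25}. There, infiniteness of $F$ is used only to choose finitely many pairwise distinct nonzero central scalars avoiding a finite forbidden set. With $|F|=q>n$, we have $n$ pairwise distinct nonzero elements available in $F$ (actually $q-1\ge n$), so we aim to show that the same choice can be made.

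Concretely, the steps are as follows.
\begin{enumerate}[(1)]
\item Reduce $A$, up to similarity, to a block form $A\sim L U$ with $L\in\mathrm{LT}_n(D)$ and $U$ upper triangular. This is possible for a nonsingular matrix over a division ring after a permutation similarity, and similarity preserves the property of being a product of two commutators.
\item Write $A\sim B\cdot C$, where $B=\mathrm{diag}(\lambda_1,\dots,\lambda_n)\cdot N$ has distinct diagonal central scalars $\lambda_i\in F^\times$ arranged to be similar to a matrix of trace-type zero, and $C$ has suitable triangular shape. The criterion we use is: a matrix over $D$ is a commutator whenever it is similar to a matrix with zero diagonal (Amitsur--Rowen/Harris-type result, valid over any division ring), and a triangular matrix with pairwise distinct diagonal entries (mod commutators) is similar to its diagonal.
\item Choose $\lambda_i=\mu_i$ pairwise distinct in $F^\times$ and set $C=\mathrm{diag}(\lambda_i^{-1})\cdot(\cdots)$. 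The triangular factor then has diagonal entries differing by distinct central scalars, hence is diagonalizable. We then need its diagonal to sum into $[D,D]$ after conjugation, which can be arranged by the free last diagonal entry, as in \cite{Pa_Bre_25}.
\end{enumerate}

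The main obstacle is step (2)--(3): verifying that exactly $n$ (or $n-1$) distinct nonzero central values suffice, together with one further constraint such as $\lambda_i\ne\lambda_j$ or $\sum\lambda_i=0$ in some corner. This is where the hypothesis $q>n$ is used rather than infiniteness. I would first try $\lambda_i$ ranging over $n$ distinct elements of $F^\times$ and track the constraint that a companion-type $n\times n$ block with characteristic data built from the $\lambda_i$ has zero-diagonal form. If an additional value must be avoided, the count $q-1\ge n$ is tight, so I would instead use $\{0\}\cup$ distinct values, exploiting case (iv) for a singular factor.

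Ultimately, this shows one factor is a nonsingular diagonalizable commutator $\mathrm{diag}(\lambda_1,\dots,\lambda_n)$-conjugate and the other is a commutator with zero-diagonal form.
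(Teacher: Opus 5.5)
There is a genuine gap. Your proposal does not yet contain a proof. The step that actually produces the two commutators, your (2)--(3), is left as ``the main obstacle'', and the mechanisms you suggest for it do not work.

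(a) Step (1) is false as stated. A permutation similarity only permutes the diagonal, so a nonsingular matrix with zero diagonal (e.g.\ a cyclic permutation matrix) always keeps a zero $(1,1)$-entry. But a product $LU$ with $L$ unit lower triangular has first column $(u_{11}, l_{21}u_{11},\dots,l_{n1}u_{11})^{T}$, so $u_{11}=0$ would make it singular. The correct tool is Lemma~\ref{LHU}, which uses a general similarity and excludes central scalar matrices.

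(b) In $P^{-1}AP=LHU$ you may prescribe $h_1,\dots,h_{n-1}$, but $h_n$ is then forced by $A$ (modulo multiplicative commutators it is fixed by the Dieudonn\'e determinant). So there is no ``free last diagonal entry''. If you split $A\sim(LH')(H''U)$ as in the proof of Theorem~\ref{mainthm}, each triangular factor must have trace in $[D,D]$, since it is to be a commutator. You would therefore need $h_n=h_n'h_n''$ with $h_n',h_n''$ in prescribed cosets of $[D,D]$, and you would still need each factor to be a single commutator. This is exactly the extra hypothesis of Theorem~\ref{mainthm}, that every nonzero element of $D$ is a product of two noncentral commutators, and it is not available here. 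Note that in the remaining case $D$ is automatically infinite-dimensional over the finite field $F$, where such a factorization is not known in general.

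(c) Your fallback of a singular factor via case (iv) is impossible: if $A=BC$ is invertible in $\mathrm{M}_n(D)$, then so are $B$ and $C$.

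What is missing is the argument that makes $q>n$ usable. The paper proves by induction the stronger claim that the first commutator can be chosen invertible.
\begin{itemize}
\item By Lemma~\ref{Rowen}, $A\sim\begin{pmatrix}0&B\\ C&E\end{pmatrix}$.
\item By induction, $E=[E_1,E_2][E_3,E_4]$ with $[E_1,E_2]$ invertible.
\item With $0\neq d=[d_1,d_2]\in D$, one gets $A=\mathrm{diag}(d,[E_1,E_2])\cdot M$, where $M=\begin{pmatrix}0&d^{-1}B\\ [E_1,E_2]^{-1}C&[E_3,E_4]\end{pmatrix}$. The first factor equals $[\mathrm{diag}(d_1,E_1),\mathrm{diag}(d_2,E_2)]$.
\item $M=[\mathrm{diag}(\lambda,E_3),Y]$ for an explicit $Y$, as soon as $E_3-\lambda\mathrm{I}_{n-1}$ is invertible for some $\lambda\in F$.
\end{itemize}
The only place where the size of $F$ matters is the existence of this $\lambda$, which is Lemma~\ref{lemma in}. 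Eigenvectors belonging to distinct central eigenvalues are right $D$-linearly independent. Hence at most $n-1$ elements of $F$ can make $E_3-\lambda\mathrm{I}_{n-1}$ singular, and $q>n$ leaves a valid choice. Your ``finite forbidden set'' intuition points in this direction, but you never identify this set or bound its size. The choices of pairwise distinct $\lambda_i\in F^\times$ you plan do not address it.
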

				
				To prove Theorem~\ref{q>n}, we adapt the main idea from the proof of \cite[Theorem~4.4]{Pa_Bre_25}, with certain modifications, and begin by establishing a sequence of lemmas. The first lemma relies on the fact, proved in \cite[Proposition 1.8]{Pa_AmiRow_94}, that every matrix in $\mathrm{M}_n(D)$ is similar to one whose $(1,1)$-entry is zero.  It is worth noting that, although \cite{Pa_AmiRow_94} works under the standing assumption that division rings are finite-dimensional over their centers, this hypothesis is not actually needed in the proof of \cite[Proposition 1.8]{Pa_AmiRow_94}.
				
				\begin{lemma} {\rm \cite[Proposition 1.8]{Pa_AmiRow_94}}\label{Rowen}
					Let $D$ be a division ring and let $n\geq2$ be an integer. Then, every noncentral matrix in $\mathrm{M}_n(D)$ is similar to a matrix of the form: $\begin{pmatrix}
						0&B\\
						C&E
					\end{pmatrix},$ where $B\in\mathrm{M}_{1\times (n-1)}(D), C\in\mathrm{M}_{(n-1)\times1}(D)$ and $E\in\mathrm{M}_{n-1}(D)$.
				\end{lemma}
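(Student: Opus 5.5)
We prove Lemma~\ref{Rowen} by induction on $n$, reducing to the following property: a noncentral $A\in\mathrm{M}_n(D)$ is similar to a matrix with zero $(1,1)$-entry if and only if there are a nonzero column vector $v\in D^n$ and a nonzero row vector $w\in D^{1\times n}$ with $wv\neq 0$ in the right normalization and $wAv=0$. Indeed, if $P\in\mathrm{GL}_n(D)$ has first column $v$ and $P^{-1}$ has first row $w$, then the $(1,1)$-entry of $P^{-1}AP$ is $wAv$. Conversely, given a nonzero $v$ and a row $w$ with $wv=1$, we can complete $v$ to a basis of $D^n$ (right $D$-space) so that the dual basis has first vector $w$. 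This is possible because $\ker w$ is a complement to $vD$. So it suffices to find $v$ and $w$ with $wv=1$ and $wAv=0$.

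\emph{Case 1: some $v\neq 0$ has $Av\notin vD$.} Then $v$ and $Av$ are right linearly independent. Choose a linear functional $w$ with $wv=1$ and $w(Av)=0$; this is possible since $\{v,Av\}$ extends to a basis. Then the $(1,1)$-entry of $P^{-1}AP$ is $0$, which gives the desired form with some $B$, $C$, $E$.

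\emph{Case 2: $Av\in vD$ for every $v$.} Then every vector is an eigenvector in the sense that $Av=v\lambda_v$. The standard argument applies: for independent $u,v$, comparing $A(u+v)=(u+v)\lambda_{u+v}$ with $u\lambda_u+v\lambda_v$ gives $\lambda_u=\lambda_v=\lambda_{u+v}$. For dependent vectors, using $n\ge 2$ to pass through a third independent vector, we get that all $\lambda_v$ equal a fixed $\lambda$. Applying this to $v d$ gives $A(vd)=vd\lambda$ and also $A(vd)=(Av)d=v\lambda d$, so $\lambda d=d\lambda$ for all $d$, hence $\lambda\in F$. Then $A=\lambda\mathrm{I}_n$ is central, contradicting the hypothesis.

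The main obstacle is the bookkeeping of sides over a noncommutative $D$. Scalars must act on the right of column vectors so that $A$ is right $D$-linear, and the dual-basis completion must be justified. Case 2 also needs care when $u$ and $v$ are dependent, since then $\lambda_{vd}=d^{-1}\lambda_v d$ only. Comparing both with an independent third vector forces $\lambda_{vd}=\lambda_v$, and this is exactly where $n\ge2$ is used.
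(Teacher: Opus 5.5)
Your proof is correct. The paper gives no argument of its own for this lemma. It quotes \cite[Proposition 1.8]{Pa_AmiRow_94} and only asserts that the finite-dimensionality hypothesis of that reference is not actually used.

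Your argument supplies exactly that missing verification. It works entirely in the right $D$-space $D^n$: row vectors serve as right-linear functionals, and $D^n=vD\oplus\ker w$. It then rests on a dichotomy: either some $v$ has $v, Av$ right independent, or $A=\lambda\mathrm{I}_n$ with $\lambda\in F$. Nothing about $\dim_F D$ enters anywhere. So the citation buys brevity, while your route makes the paper's remark about dropping finite-dimensionality self-evident.

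Your treatment of dependent vectors in Case 2 is the right way to force $\lambda\in F$: there $\lambda_{vd}=d^{-1}\lambda_v d$, and $n\ge 2$ supplies a third vector independent of both. This is also the one place where $n\ge 2$ genuinely matters.

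Two cosmetic points:
\begin{itemize}
\item The announced induction on $n$ is never used and should be dropped.
\item Case 1 can be shortened. Take $P$ whose columns $v, Av, p_3,\dots,p_n$ form a basis of $D^n$. Then $P^{-1}AP$ has first column $P^{-1}Av=e_2$, so its $(1,1)$-entry is $0$, with no dual-basis bookkeeping needed.
\end{itemize}
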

				
				Following once more the approach in \cite[Theorem 4.4]{Pa_Bre_25}, the final step is to show that for every $A \in \mathrm{M}_n(D)$ there exists a scalar $\lambda \in F$ such that $A - \lambda \mathrm{I}_n$ is nonsingular. This leads us to the following lemma, stated in the setting where the center of $D$ is finite, as required in Question~\ref{que}.
				
				\begin{lemma}\label{lemma in}
					Let $D$ be a division ring with center $F$, and let $n \ge 2$ be an integer. Suppose $F$ is a finite field with $q$ elements. If $q > n$, then for every $A \in \mathrm{M}_n(D)$, there exists a scalar $\lambda \in F$ such that $A - \lambda \mathrm{I}_n$ is nonsingular.
				\end{lemma}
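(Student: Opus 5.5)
The plan is to show that the set of scalars $\lambda\in F$ for which $A-\lambda\mathrm{I}_n$ is singular has at most $n$ elements. Since $|F|=q>n$, some $\lambda\in F$ then makes $A-\lambda\mathrm{I}_n$ nonsingular.

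\emph{Reformulation.} For $\lambda\in F$, the matrix $A-\lambda\mathrm{I}_n$ is singular exactly when there is a nonzero column vector $v\in D^n$ with $Av=v\lambda$. Because $\lambda$ is central, $v\lambda=\lambda v$, and scalars act compatibly on either side. We regard $D^n$ as a right $D$-vector space and treat such $v$ as eigenvectors for the $D$-linear map $v\mapsto Av$.

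\emph{Independence of eigenvectors.} Let $\lambda_1,\dots,\lambda_k\in F$ be distinct, and let $v_1,\dots,v_k$ be nonzero vectors with $Av_i=v_i\lambda_i$. We show they are right linearly independent over $D$ by the usual minimal-relation argument. Take a shortest nontrivial relation $\sum v_i d_i=0$ with all $d_i\neq0$.
\begin{itemize}
\item Applying $A$ gives $\sum v_i\lambda_i d_i=0$.
\item Multiplying the original relation on the right by $\lambda_1$ gives $\sum v_i d_i\lambda_1=0$.
\item Since $\lambda_1$ is central, $d_i\lambda_1=\lambda_1 d_i$. Subtracting yields $\sum_{i\ge2} v_i(\lambda_i-\lambda_1)d_i=0$.
\end{itemize}
The coefficients $(\lambda_i-\lambda_1)d_i$ are nonzero for $i\ge 2$, so this is a shorter nontrivial relation, a contradiction (unless $k=1$, where $v_1d_1=0$ is already impossible). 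Centrality of the $\lambda_i$ is the crucial point: it is what lets the argument go through over a noncommutative $D$.

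\emph{Conclusion.} The space $D^n$ has right dimension $n$, so $k\le n$. Hence at most $n$ elements $\lambda\in F$ make $A-\lambda\mathrm{I}_n$ singular, and $q>n$ leaves at least one good $\lambda$.

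The main point needing care is the eigenvector reformulation: singularity of a matrix over $D$ must be expressed as the existence of a nonzero right kernel vector, with invertibility equivalent to trivial kernel. This is standard for finite-dimensional vector spaces over division rings, since injective implies bijective by dimension count.
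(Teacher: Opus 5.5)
Your proposal is correct and follows essentially the same route as the paper. Both arguments show that eigenvectors $v$ with $Av=v\lambda$ for distinct central $\lambda$ are right $D$-linearly independent, so at most $n$ values of $\lambda\in F$ make $A-\lambda\mathrm{I}_n$ singular, and $q>n$ finishes. The differences are cosmetic: you use a shortest-relation argument where the paper eliminates one vector at a time, and you count directly where the paper argues by contradiction with $m=q$.
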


				\begin{proof}
					Assume, toward a contradiction, that $A - \lambda \mathrm{I}_n$ is singular for every $\lambda \in F$. Then, for each $\lambda \in F$ we can choose a nonzero column vector $v_\lambda \in \mathrm{M}_{n\times 1}(D)$ such that $$(A - \lambda \mathrm{I}_n) v_\lambda = 0,$$ which is equivalent to $
					A v_\lambda = \lambda v_\lambda.
					$ Since $\lambda \in F$ lies in the center of $D$, it commutes with every element of $D$, and thus $
					\lambda v_\lambda = v_\lambda \lambda.
					$
					
					Now pick distinct scalars $\lambda_1, \dots, \lambda_m \in F$ with $m \ge 2$, and consider the associated nonzero vectors $
					v_{\lambda_1}, \dots, v_{\lambda_m} \in \mathrm{M}_{n\times 1}(D).
					$ 	We claim these vectors are right $D$-linearly independent.
					
					Suppose
					\begin{equation}\label{lin-rel}
						v_{\lambda_1} r_1 + \cdots + v_{\lambda_m} r_m = 0
					\end{equation}
					for some $r_1, \dots, r_m \in D$. Applying $A$ to Eq.\eqref{lin-rel} and using $A v_{\lambda_i} = -\lambda_i v_{\lambda_i}$ gives
					\begin{equation}
						0 = A\left(\sum_{i=1}^m v_{\lambda_i} r_i \right)
						= \sum_{i=1}^m (\lambda_i v_{\lambda_i}) r_i
						= \sum_{i=1}^m v_{\lambda_i} (\lambda_i r_i). \label{new}
					\end{equation}
					Subtracting $\lambda_1$ times Eq.\eqref{lin-rel} from Eq.\eqref{new} yields
					\[
					\sum_{i=1}^m v_{\lambda_i} (\lambda_i - \lambda_1) r_i = 0.
					\]
					For $i > 1$, $\lambda_i - \lambda_1 \in F$ is a nonzero central element of $D$, hence invertible. Setting $s_i = (\lambda_i - \lambda_1) r_i$ for $i>1$, we obtain
					\[
					\sum_{i=2}^m v_{\lambda_i} s_i = 0.
					\]
					The same elimination argument can now be repeated: successively remove one vector at a time until only a single term remains, which must be zero. Tracing back, we find all $r_i = 0$. Thus  $v_{\lambda_1}, \dots, v_{\lambda_m}$
					are right $D$-linearly independent.
					
					In particular, if we take $m = q$, we produce $q$ right $D$-linearly independent vectors in $\mathrm{M}_{n\times 1}(D)$. But this $D$-module has dimension $n$, so it cannot contain more than $n$ independent vectors. Therefore $q \le n$, contradicting our assumption $q > n$.	It follows that there must exist $\lambda \in F$ for which $A - \lambda \mathrm{I}_n$ is invertible.
				\end{proof}
				
				\begin{remark}
					Lemma~\ref{lemma in} fails if $q \le n$. Here is a counterexample. Let $F$ be the finite field with $q$ elements, and consider the polynomial
					\[
					g(x) = x^q - x \in F[x].
					\]
					Since $F\setminus\{0\}$ is a cyclic group under multiplication of $F$, it follows that $g(\lambda) = 0$ for every $\lambda \in F$. Let $C \in \mathrm{M}_q(F)$ be the companion matrix of $g(x)$. By construction,
					\[
					\det(\lambda \mathrm{I}_q - C) = g(\lambda) = 0
					\]
					for all $\lambda \in F$, so $C - \lambda \mathrm{I}_q$ is singular for every $\lambda$.
					
					If $n = q$, take $A = C$ to obtain a counterexample. If $n > q$, extend $C$ by a zero block:
					\[
					A = \begin{pmatrix}
						C & 0 \\[3pt]
						0 & 0_{n-q}
					\end{pmatrix} \in \mathrm{M}_n(F) \subseteq \mathrm{M}_n(D).
					\]
					Then $A - \lambda \mathrm{I}_n$ remains singular for all $\lambda \in F$.
				\end{remark}
				
				\begin{remark}
					A key point in the proof of Lemma~\ref{lemma in} is that we are working in the column space
					\(\mathrm{M}_{n\times 1}(D)\) with its natural structure as a \emph{right \(D\)-module}.
					The argument relies on \emph{right \(D\)-linear independence}, not on \(F\)-linear independence.
					Indeed, if one were to work over \(F\) instead, the dimension would be
					\[
					\dim_F \mathrm{M}_{n\times 1}(D) =
					\begin{cases}
						n \cdot \dim_F D& \text{if } \dim_F D < \infty, \\
						\infty& \text{if } \dim_F D = \infty,
					\end{cases}
					\]
					so in the infinite-dimensional case there would be no restriction on the number of
					\(F\)-linearly independent columns.
					In contrast, as a right \(D\)-module, \(\mathrm{M}_{n\times 1}(D)\) has dimension exactly \(n\),
					which is what makes the counting argument work.
				\end{remark}
				
				We are now in a position to present the proof of Theorem~\ref{q>n}.

				\begin{proof}[Proof of Theorem~\ref{q>n}]
					We are guided by the idea in the proof of \cite[Theorem 4.4]{Pa_Bre_25}, making some adjustments for our setting.  By Theorem~\ref{recent}, we may restrict to the case where $D$ is noncommutative and $F$ is finite.  We argue by induction on $n$, proving the stronger claim:  Every matrix in $\mathrm{M}_n(D)$ is expressible as a product of two commutators, with the first factor being invertible.
					
					Let $A \in \mathrm{M}_n(D)$.  We begin with the central case. First, if $A$ lies in the center of $\mathrm{M}_n(D)$, then $A \in \mathrm{M}_n(F)$. By Theorem~\ref{recent}, $A$ is already a product of two commutators in $\mathrm{M}_n(F)$, with the first one invertible, and hence also in $\mathrm{M}_n(D)$. Now, we use reduction to the noncentral case. Assume now that $A$ is noncentral.	The base case $n = 2$ is covered by Theorem~\ref{recent}. We continue to the  inductive step.
					
					Let $n \ge 3$ and suppose the claim holds for all smaller sizes. By Lemma~\ref{Rowen}, $A$ is similar to a block matrix
					$
					\begin{pmatrix}
						0 & B \\
						C & E
					\end{pmatrix},
					$
					where $B \in \mathrm{M}_{1 \times (n-1)}(D)$, $C \in \mathrm{M}_{(n-1) \times 1}(D)$, and $E \in \mathrm{M}_{n-1}(D)$. As the desired conclusion is preserved under similarity, we may, without loss of generality, assume that $A$ already has this block form.
					
					We next set up the first commutator.  Because $D$ is noncommutative, there exists a nonzero commutator $d = [d_1, d_2] \in D$ where $d_1,d_2\in D$. By the inductive hypothesis, we may write
					$
					E = [E_1, E_2] \cdot [E_3, E_4],
					$
					where $E_1,E_2,E_3,E_4\in\mathrm{M}_{n-1}(D)$ and  $[E_1, E_2]$ is nonsingular. Then
					\[
					A =
					\begin{pmatrix}
						0 & B \\
						C & [E_1, E_2][E_3, E_4]
					\end{pmatrix}
					=
					\underbrace{\begin{pmatrix}
							[d_1, d_2] & 0 \\
							0 & [E_1, E_2]
					\end{pmatrix}}_{\text{first factor}}
					\cdot
					\underbrace{\begin{pmatrix}
							0 & d^{-1}B \\
							[E_1, E_2]^{-1}C & [E_3, E_4]
					\end{pmatrix}}_{\text{second factor}}.
					\]
					The first factor is indeed a commutator:
					\[
					\begin{pmatrix}
						[d_1, d_2] & 0 \\
						0 & [E_1, E_2]
					\end{pmatrix}
					=
					\left[
					\begin{pmatrix}
						d_1 & 0 \\
						0 & E_1
					\end{pmatrix},
					\begin{pmatrix}
						d_2 & 0 \\
						0 & E_2
					\end{pmatrix}
					\right].
					\] Note that the first factor is nonsingular, as promised.
					
					We now write the second factor as a commutator.
					Since $q > n$, Lemma~\ref{lemma in} guarantees the existence of $\lambda \in F$ such that $E_3 - \lambda \mathrm{I}_{n-1}$ is nonsingular. For this choice of $\lambda$, it is not difficult to verify that
					\begin{eqnarray*}
						&&	\begin{pmatrix}
							0 & d^{-1}B \\
							[E_1, E_2]^{-1}C & [E_3, E_4]
						\end{pmatrix}\\
						&=&
						\left[
						\begin{pmatrix}
							\lambda & 0 \\
							0 & E_3
						\end{pmatrix},
						\begin{pmatrix}
							0 & -d^{-1}B(E_3 - \lambda \mathrm{I}_{n-1})^{-1} \\
							(E_3 - \lambda \mathrm{I}_{n-1})^{-1}[E_1, E_2]^{-1}C & E_4
						\end{pmatrix}
						\right].
					\end{eqnarray*}
					
					Finally, we have expressed $A$ as a product of two commutators, with the first commutator invertible. This completes the inductive step, and hence the proof.
				\end{proof}
				
				\subsection{Products of commutators in division rings}
				
				We gather information about $D$ by examining the question of whether every element of a division ring \( D \) can be written as a product of two commutators in \( D \). The first known example of a division ring in which every element is itself a commutator was constructed by Harris in \cite{Pa_Ha_58}. This line of inquiry was further developed by Makar-Limanov in \cite{Pa_Ma_89}. More recently, Lichtman showed in \cite{Pa_Li_05} that the phenomenon observed by Harris is not rare. In particular, he proved that any division ring \( D \) can be embedded into a larger division ring \( K \) in which every element is a commutator \cite[Corollary]{Pa_Li_05}. This implies that given any division ring \( D \), there exists an extension division ring \( K \supseteq D \) such that every element of \( K \) can be written as a product of two commutators within \( K \).  Besides, it turns out that this question is also raised explicitly in \cite[Question~5.9]{Pa_Ga_25}. Recently in \cite{Pa_JaKe_25}, Jang and Ke gave an affirmative answer in the case where \( D \) is a skew Laurent series division ring over a field. In particular, it is shown in \cite[Theorem]{Pa_JaKe_25} that if $D$ is a skew Laurent series division ring over a field, then every element of $D$ can be expressed as a product of two commutators.

				Relating to skew Laurent series division rings, we proceed with the following observation. As shown in \cite[Proposition 2.3.5]{Bo_Cohn_95}, for any given field \( F \), one can construct a noncommutative division ring \( D \) whose center is precisely \( F \). It is well known that any division ring is a vector space over its center, and in Cohn's construction, the resulting division ring has infinite dimension over \( F \). A closer inspection of the proof reveals that \( D \) arises as a skew Laurent series division ring. Combining this insight with the recent result of Jang and Ke in \cite[Theorem]{Pa_JaKe_25}, which shows that every element in such a division ring is a product of two commutators, we obtain the following:
				
				\begin{theorem}\label{exist}
					Let \( F \) be a field. Then there exists a noncommutative division ring \( D \) with center \( F \), such that \( D \) is infinite-dimensional over \( F \), and every (respectively, nonzero) element of \( D \) can be written as a product of two (respectively, noncentral) commutators in~\( D \).
				\end{theorem}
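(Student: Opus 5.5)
The plan is to take $D$ to be Cohn's division ring from \cite[Proposition~2.3.5]{Bo_Cohn_95}. Given the field $F$, adjoin indeterminates to get $K=F(t_i : i\in\mathbb{Z})$ and let $\sigma$ be the shift automorphism $t_i\mapsto t_{i+1}$. Set $D=K((x;\sigma))$, the skew Laurent series division ring with $xa=\sigma(a)x$. The first step is to check, following Cohn's proof, that the center of $D$ is exactly $F$. A central series $\sum a_jx^j$ must commute with $x$, which forces $\sigma(a_j)=a_j$. It must also commute with every $a\in K$, which forces $a_j(a-\sigma^j(a))=0$. The fixed field of $\sigma$ on $K$ is $F$, and $\sigma^j\neq\mathrm{id}$ for $j\neq 0$, so only $a_0\in F$ survives. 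Since $\sigma\ne \mathrm{id}$, $D$ is noncommutative.

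For infinite dimensionality over $F$, I would observe that $K\subseteq D$ and $K$ is already infinite-dimensional over $F$. Alternatively, the powers $x^j$, $j\in\mathbb{Z}$, are $F$-linearly independent.

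Next I would apply \cite[Theorem]{Pa_JaKe_25}: since $D$ is a skew Laurent series division ring over a field, every element of $D$ is a product of two commutators. This gives the first assertion.

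For the refinement, let $a\in D$ be nonzero with $a=[b_1,b_2][b_3,b_4]$. Then both factors are nonzero. The remaining point is to make both factors noncentral. If $[b_1,b_2]=\alpha\in F^{\times}$, I would rewrite $a=\alpha[b_3,b_4]=[\alpha b_3,b_4]$, so $a$ is a single commutator. The factor $[b_3,b_4]=\alpha^{-1}a$ can be handled similarly. This reduces the problem to writing a nonzero element $c$, which is itself a commutator, as a product of two noncentral commutators.

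To do this, I would write $c=u\cdot(u^{-1}c)$ with $u$ a noncentral commutator such that $u^{-1}c$ is a noncentral commutator. Here I would use the structure of $D$. The commutators $[a,x]=(a-\sigma(a))x$ with $a\in K$ realize $kx$ for every $k$ in the image of $\sigma-1$, and this image is a large $F$-subspace of $K$. Also $[x^{-1}k,x]$-type expressions give elements of many degrees. I would pick such $u$ of suitable degree so that the Jang--Ke result applies to the quotient. An alternative route is to apply Jang--Ke to $\alpha^{-1}a\cdot$ a suitable noncentral factor. This step, guaranteeing noncentrality of both factors, is the main obstacle.

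The fallback is to inspect the Jang--Ke construction directly. Their factorization typically produces factors with nonzero $x$-degree, such as $x$-shifted terms, and such factors are automatically noncentral, since the center $F$ sits in degree $0$. The plan is to argue via a degree or valuation count that at least one factor has nonzero valuation. If so, the other factor has valuation $v(a)$ minus that valuation. I would then adjust by multiplying one factor by $x^{\pm1}$-related commutators if needed.
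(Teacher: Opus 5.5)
\textbf{The gap is the noncentral refinement, which the proposal does not prove.} Your construction of $D=K((x;\sigma))$, your check that $Z(D)=F$ and $\dim_F D=\infty$, and your appeal to \cite[Theorem]{Pa_JaKe_25} for the first assertion all coincide with the paper's argument (Cohn's construction \cite[Proposition~2.3.5]{Bo_Cohn_95} plus Jang--Ke).

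The specific failures in your plan for the refinement are these.
\begin{itemize}
\item Reducing to the case where $a$ is a single commutator gains nothing. To write $c=u\cdot(u^{-1}c)$ you need $u^{-1}c$ to be a \emph{single} noncentral commutator. Jang--Ke only gives a product of two, so you would end up with three factors.
\item The valuation fallback fails as stated. Valuations add, so when $v(a)\neq 0$ one factor does have nonzero valuation. But the other may have valuation $0$, and the valuation-zero elements include all of $F=Z(D)$. When $v(a)=0$, both factors may have valuation $0$.
\item ``Adjusting'' a factor by multiplying it by further commutators changes the number of factors, since a product of commutators need not be a commutator.
\end{itemize}
The paper itself settles this point only by appealing to the specific factorization in \cite[Section~2.3]{Pa_JaKe_25}. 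Its factors are noncentral when $D$ is infinite-dimensional over its center, an observation confirmed by Jang. You name this inspection but do not carry it out.

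\textbf{The missing idea:} for your $D$ the whole statement is elementary once you take commutators with a fixed element of $K$ rather than with $x$.
\begin{itemize}
\item For $z=\sum_i z_ix^i$ one has $[t_0,z]=\sum_i z_i(t_0-t_i)x^i$. Since $t_0-t_i\neq 0$ for $i\neq 0$, the set $\{[t_0,z] : z\in D\}$ is exactly the set of series with vanishing $x^0$-coefficient.
\item Given $a\neq 0$ with $v(a)=m$, set $j=\min(m,0)-1$ and write $a=x^j\cdot(x^{-j}a)$.
\item Both factors have zero $x^0$-coefficient, because $x^{-j}a$ is supported in degrees $\ge m-j\ge 1$. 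Hence both are of the form $[t_0,z]$; for instance $x^j=[t_0,(t_0-t_j)^{-1}x^j]$.
\item Their valuations $j\le -1$ and $m-j\ge 1$ are nonzero, so neither factor lies in $F$.
\end{itemize}
Together with $0=[t_0,0]\,[t_0,0]$, this proves the theorem for Cohn's $D$ without invoking Jang--Ke at all. Your observation that $[K,x]$ gives $(k-\sigma(k))x$ was too restrictive: it only produces coefficients in $\mathrm{Im}(1-\sigma)$. Fixing the first entry as an element moved by every $\sigma^i$ with $i\neq 0$, and varying the second, gives everything.
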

				
				Here, noncentral commutators refer to commutators that do not belong to the center of the division ring. Although \cite[Theorem]{Pa_JaKe_25} only states that every element is a product of two commutators, a closer inspection of the construction in \cite[Section 2.3]{Pa_JaKe_25} reveals that these commutators are in fact noncentral in the case of a skew Laurent series division ring which is infinite-dimensional over its center. We are grateful to Hau-Yuan Jang  (coauthor of \cite{Pa_JaKe_25}) for confirming this observation.

				In Theorem~\ref{exist}, the division ring \( D \) constructed is noncommutative and infinite-dimensional over its center, which is exactly \( F \). However, there are also noncommutative division rings that are finite-dimensional over some field, although their center may not coincide with the given field \( F \). Some of these examples still arise from skew Laurent series division rings, provided certain conditions are imposed on the underlying field automorphism. See \cite{Pa_JaKe_25} in detail.
				
				On the other hand, not all such division rings need to come from skew Laurent series constructions. For example, it has been shown in \cite[Theorem 5.1]{Pa_DaDuSo_24} and \cite[Theorem 3.1]{Pa_DaDuSo_25} that in any quaternion division algebra over a Pythagorean field, every nonzero element can be written as a product of two noncentral commutators.

				As noted in Theorem~\ref{recent}, the remaining case to consider is when \( A \) is nonsingular, the center \( F \) is a finite field, and the division ring \( D \) has infinite dimension over \( F \). According to Theorem~\ref{exist}, such division rings do exist. In particular, there are noncommutative division rings \( D \) with finite center $F$ that are infinite-dimensional over \( F \) and in which every nonzero element can be written as a product of two noncentral commutators. With this in mind, we now proceed under the additional assumption that \( D \) admits such a decomposition for all its elements, thereby allowing us to give an affirmative answer to Question~\ref{que} in this setting.

				\begin{theorem}\label{mainthm}
					Let $D$ be a division ring with center $F$, and let $n\geq 2$ be an integer such that either
					\begin{enumerate}[\rm (i)]
						\item  the field $F$ has at least three elements, or
						\item the field $F$ has exactly two elements  and $n$ is odd.
					\end{enumerate}		If  every nonzero element of $D$ can be expressed as a product of  two noncentral commutators in $D$, then any matrix in $\mathrm{M}_n(D)$ can be written as a product of two  commutators.
				\end{theorem}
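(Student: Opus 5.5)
By Theorem~\ref{recent} we only need the case $n\ge 3$, $A$ nonsingular, $D\ne F$ and $F$ finite; singular and scalar matrices are already covered there. We follow the inductive scheme of the proof of Theorem~\ref{q>n} but replace Lemma~\ref{lemma in}, which needs $q>n$, by an invariant carried through the induction. The strengthened claim is that every nonsingular $A\in\mathrm{M}_n(D)$ can be written as $A=[X_1,X_2][X_3,X_4]$ where:
\begin{enumerate}[\rm (a)]
\item $[X_1,X_2]$ is invertible;
\item $X_3-\lambda\mathrm{I}_n$ is invertible for every $\lambda$ in a prescribed large subset of $F$.
\end{enumerate}
The base case $n=1$ uses the hypothesis on $D$. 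Write $0\ne d=[a,b][c,e]$ with both commutators noncentral. Then $[a,b]\ne 0$ is invertible, and $c$ is noncentral because $[c,e]\ne 0$. Hence $c-\lambda\ne 0$ is invertible for \emph{every} $\lambda\in F$.

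For the inductive step, Lemma~\ref{Rowen} lets us assume $A=\begin{pmatrix}0&B\\ C&E\end{pmatrix}$. Since $A$ is nonsingular we have $C\ne 0$, so we may arrange that $E$ is nonsingular, possibly after a further similarity fixing the zero corner; this has to be checked. We factor $E=[E_1,E_2][E_3,E_4]$ by induction and reuse verbatim the factorization
$$A=\begin{pmatrix}[d_1,d_2]&0\\0&[E_1,E_2]\end{pmatrix}\begin{pmatrix}0&d^{-1}B\\ [E_1,E_2]^{-1}C&[E_3,E_4]\end{pmatrix}.$$
We then write the second factor as $[\mathrm{diag}(\lambda,E_3),\,\cdot\,]$ with the explicit matrix from the proof of Theorem~\ref{q>n}. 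This only requires that $E_3-\lambda\mathrm{I}_{n-1}$ be invertible for the one chosen $\lambda\in F$, which invariant (b) supplies.

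The new third matrix $X_3=\mathrm{diag}(\lambda,E_3)$ loses invertibility of $X_3-\lambda\mathrm{I}$ only at the single value $\lambda$. To stop the set of bad values from growing, we do not peel off one row at a time. Instead we decompose $\{1,\dots,n\}$ into blocks whose ``corner'' entries in $X_3$ are either a noncentral element $c$ from the base case (no bad value at all) or an $F$-scalar. We alternate two values $\lambda_0\ne\lambda_1$, and each peeling step only needs an eigenvalue avoided by the previous $E_3$.

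Concretely, the invariant we would propagate is: $X_3-\lambda\mathrm{I}$ is invertible for all $\lambda\in F$ except possibly one value, which we control.
\begin{itemize}
\item When $|F|\ge 3$ we can always pick $\lambda\in F$ different from the current bad value. The new bad value is then $\lambda$ itself, since $E_3-\lambda'\mathrm{I}$ is invertible for every $\lambda'$ other than the old bad value. The invariant therefore persists, and induction from $n=1$ gives case~(i).
\item When $F=\mathrm{GF}(2)$ only the values $0$ and $1$ exist, and a single bad value already leaves exactly one good value; this forces a parity constraint. We would handle it by peeling off two rows at a time, using a $2\times 2$ block built from $\mathrm{diag}(c,c')$ with noncentral $c,c'$ to reset the invariant to ``no bad value''. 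With the $1\times1$ base case this covers all odd $n$, which is case~(ii).
\end{itemize}

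The main obstacle is the step just described: verifying that the bad set really stays of size at most one, in particular that $\mathrm{diag}(\lambda,E_3)-\lambda'\mathrm{I}$ is invertible exactly when $\lambda'\ne\lambda$ and $E_3-\lambda'\mathrm{I}$ is invertible. Secondarily, we must make the two-row peeling step for $\mathrm{GF}(2)$ work via a Sylvester-type equation $\mu X-XE_3=B$ with noncentral $\mu$. We would first try to avoid that equation by choosing the $2\times2$ corner block to be a product of two commutators supplied directly by Theorem~\ref{recent}(ii).
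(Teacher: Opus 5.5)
\textbf{There is a genuine gap: the invariant you propagate does not survive the peeling step.}

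Write $\mathrm{bad}(X)=\{\lambda\in F\mid X-\lambda\mathrm{I}\text{ is singular}\}$. You correctly note that $\mathrm{diag}(\lambda,E_3)-\lambda'\mathrm{I}_n$ is invertible exactly when $\lambda'\neq\lambda$ and $E_3-\lambda'\mathrm{I}_{n-1}$ is invertible. That says $\mathrm{bad}(\mathrm{diag}(\lambda,E_3))=\{\lambda\}\cup\mathrm{bad}(E_3)$, not $\{\lambda\}$; the old bad value is not discarded.

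Since $\lambda$ must lie outside $\mathrm{bad}(E_3)$ for $E_3-\lambda\mathrm{I}_{n-1}$ to be invertible, the bad set grows by exactly one element at every step. It is a similarity invariant, so the conjugations do not help either. Starting from $\emptyset$ at size $1$, you have $m-1$ bad values at size $m$, so in general the scheme only reaches $n\le|F|+1$. That is a slightly sharper form of the counting in Lemma~\ref{lemma in}, not a way around it.

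The proposed fixes do not work:
\begin{itemize}
\item Alternating two values $\lambda_0\neq\lambda_1$ is impossible from size $3$ on.
\item No $2\times2$ block built from noncentral $c,c'$ can ``reset'' anything, because the bad set of a block diagonal sum is the union of the bad sets of its blocks.
\item The two-row step for $\mathrm{GF}(2)$ would also need to solve $\mu X-XE_3=B$ with $E_3$ arbitrary, where Lemma~\ref{lem Bo} gives no leverage.
\end{itemize}

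Separately, the reduction ``we may arrange that $E$ is nonsingular'' is false. Take a noncentral involution $A$, e.g.\ $\mathrm{diag}(1,\dots,1,-1)$ in characteristic $\neq2$. Any similar matrix $\begin{pmatrix}0&B\\ C&E\end{pmatrix}$ squares to $\mathrm{I}_n$, which forces $BC=1$ and $BE=0$, so $E$ is singular. This error is repairable: the factorization never uses nonsingularity of $A$, and $0=[a,b][c,c]$ gives a base case. The growth of the bad set is not repairable within this scheme.

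\textbf{What the paper does instead.} It uses no induction and no finite-field counting.
\begin{itemize}
\item It picks nonzero $x_1,\dots,x_{n-1}\in F$ with zero sum (Lemma~\ref{choose}). This is exactly where hypotheses (i)/(ii), including the parity condition for $\mathrm{GF}(2)$, enter.
\item It applies Lemma~\ref{LHU} with $h_i=x_i^2$.
\item It uses the hypothesis on $D$ only once, to write the corner entry $h_n=h_n'h_n''$ with $h_n',h_n''$ noncentral commutators.
\item Then $P^{-1}AP=(LH')(H''U)$, where $H'=\mathrm{diag}(x_1,\dots,x_{n-1},h_n')$ and $H''=\mathrm{diag}(x_1,\dots,x_{n-1},h_n'')$.
\item The leading $(n-1)\times(n-1)$ triangular blocks have trace zero, hence are commutators by Theorem~\ref{triangular trace zero}.
\item Lemma~\ref{cheo khoi} with $p(x)=\prod_i(x-x_i)$ splits off the corner. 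This $p$ annihilates those triangular blocks but is invertible at the noncentral $h_n',h_n''$.
\end{itemize}
Noncentrality of the single corner entry is what replaces your attempt to dodge eigenvalues in $F$.
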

	
				To proceed with the proof of Theorem~\ref{mainthm}, we first present a sequence of auxiliary lemmas. When dealing with nonsingular matrices, the following result will be a key tool.
				
				\begin{lemma} {\rm \cite[Lemma~2.1]{Egorchenkova}} \label{LHU}
					Let \( D \) be a division ring with center $F$ and let \( n\geq2 \) be an integer. Suppose that $h_1,h_2,\ldots,h_{n-1}\in D\setminus\{0\}$. If $A\in\mathrm{M}_n(D)$ is nonsingular and not in \( \{ \lambda \mathrm{I}_n \mid \lambda \in F \} \) where $\mathrm{I}_n$ is the identity matrix in $\mathrm{M}_n(D)$, then there exists an nonsingular matrix \( P \in \mathrm{M}_n(D) \) such that
					$P^{-1} A P = L H U,$
					where \( L \in \mathrm{LT}_n(D) \), \( U \in \mathrm{UT}_n(D) \), and \( H = \mathrm{diag}(h_1,h_2,\ldots,h_{n-1},h_n) \) for some nonzero element \( h_n \in D\).
				\end{lemma}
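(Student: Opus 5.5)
The plan is to prove the statement by induction on $n$, peeling off one diagonal entry at a time with a Schur-complement (block Gaussian elimination) step.

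\emph{First step.} The starting point is the standard fact that a matrix $A\in\mathrm{M}_n(D)$ for which every nonzero column $v$ satisfies $Av\in vD$ must equal $\lambda\mathrm{I}_n$ with $\lambda\in F$. Since $A$ is not of this form, we can pick $v$ with $v$ and $Av$ right $D$-linearly independent. Given $h_1\ne0$, put $e_1=v$ and $e_2=Av-vh_1$, which are still independent, and extend to a basis $e_1,\dots,e_n$ of $\mathrm{M}_{n\times1}(D)$. In this basis $A$ becomes a similar matrix whose first column is $(h_1,1,0,\dots,0)^T$, so its $(1,1)$-entry equals $h_1\neq 0$.

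\emph{Block factorization.} Because the $(1,1)$-entry is invertible, block elimination gives
\[
P_1^{-1}AP_1=L_1\,(h_1\oplus A')\,U_1 ,
\]
where $L_1\in\mathrm{LT}_n(D)$ and $U_1\in\mathrm{UT}_n(D)$ differ from $\mathrm{I}_n$ only in the first column (respectively, first row), and $A'\in\mathrm{M}_{n-1}(D)$ is the Schur complement. Since $A$ is nonsingular, $A'$ is nonsingular too. For $n=2$ we are done: $A'=h_2$ is simply whatever nonzero element results.

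\emph{Inductive step.} For $n\ge3$, apply the induction hypothesis to $A'$ with $h_2,\dots,h_{n-1}$, obtaining $Q^{-1}A'Q=L'H'U'$. Conjugate by $T=1\oplus Q$. Then $T^{-1}L_1T$ is still lower unitriangular, because only its first column changes, and $T^{-1}U_1T$ is still upper unitriangular. Multiplying out yields
\[
T^{-1}P_1^{-1}AP_1T=\bigl(T^{-1}L_1T\,(1\oplus L')\bigr)\,(h_1\oplus H')\,\bigl((1\oplus U')\,T^{-1}U_1T\bigr),
\]
which has the required shape $LHU$.

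\emph{Main obstacle.} The induction hypothesis needs $A'$ not to lie in $\{\mu\mathrm{I}_{n-1}\mid\mu\in F\}$, and this is where I expect the real difficulty. To handle it I would exploit the freedom in the choices: the vector $v$, the vector $e_2$ (one may use $e_2=(Av-vh_1)c$ for a suitable $c\ne 0$, giving first column $(h_1,c^{-1},0,\dots)^T$), and the remaining basis vectors.

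I would first compute how $A'$ changes when $e_2$ is rescaled or $e_3$ is replaced by $e_3+e_2d$. Such a change alters $A'$ by a nonzero rank-one-type perturbation, so a central scalar $A'=\mu\mathrm{I}_{n-1}$ would be destroyed while nonsingularity of $A'$ is preserved (nonsingularity is automatic from that of $A$).

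If this direct perturbation fails in some degenerate case, my fallback is different. If $A'=\mu\mathrm{I}$ is forced for every admissible choice, then $A-\mu\mathrm{I}_n$ would have rank at most one on a large family of bases. I would then derive from this that $A$ itself is a central scalar, contradicting the hypothesis.
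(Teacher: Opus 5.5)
Your architecture is sound: normalizing the first column to $(h_1,1,0,\dots,0)^T$, splitting off the pivot by block elimination, recursing on the Schur complement $A'$, and reassembling with $T=1\oplus Q$. The paper gives no proof of its own here; it simply quotes Egorchenkova's Lemma~2.1.

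The gap is the step you flag yourself, and the remedy you propose cannot work. Rescaling $e_2$, or replacing $e_3$ by $e_3+e_2d$, is a change of basis by a matrix $1\oplus R$ with $R\in\mathrm{GL}_{n-1}(D)$. It fixes $e_1$ and the complement $W=e_2D+\cdots+e_nD$, and the Schur complement just becomes $R^{-1}A'R$. So $A'=\mu\mathrm{I}_{n-1}$ with $\mu\in F$ is never destroyed this way; no rank-one perturbation occurs. The bad situation genuinely arises: for $A=\mathrm{diag}(a,\mu,\mu)$ over a field with $a\neq\mu$, $v=(1,1,0)^T$, $h_1=a$ and $e_3=(0,0,1)^T$, one gets $A'=\mu\mathrm{I}_2$. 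Your fallback is not an argument either, and its premise is off. $A'=\mu\mathrm{I}_{n-1}$ only says that $A-\mu\mathrm{I}_n$ has rank at most one on $W$, hence rank at most two on $D^n$, not at most one.

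What is needed is to move $W$ while keeping $e_1=v$. For example, replace $e_3$ by $e_3+e_1d$. This preserves the first column but changes the first row $\phi$ of $P^{-1}$ to $\phi-d\phi_3$, where $\phi_3$ is the third row. Call a row admissible if it arises this way, i.e.\ $\phi v=1$ and $\phi Av=h_1$. Using only these two relations, a direct computation gives
\[
A'=\mu\mathrm{I}_{n-1}\iff A-\mu\mathrm{I}_n=Av\,h_1^{-1}\,\phi A-\mu\,v\phi .
\]
Suppose two admissible rows $\phi\neq\phi'$ both produced central scalars $\mu,\mu'$, and subtract the two identities.
\begin{itemize}
\item If $\mu=\mu'$, you get $Av\,h_1^{-1}(\phi-\phi')A=\mu\,v(\phi-\phi')$. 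This is impossible: the left side is nonzero with column space $AvD$, while $vD\cap AvD=0$.
\item If $\mu\neq\mu'$, you get $(\mu'-\mu)\mathrm{I}_n$ equal to a matrix with column space inside $vD+AvD$. This is impossible because $n\ge3$.
\end{itemize}
Hence at most one choice of $d$ is bad. Taking $d\in\{0,1\}$ therefore gives a nonsingular $A'\notin\{\lambda\mathrm{I}_{n-1}\mid\lambda\in F\}$, to which the induction hypothesis applies.
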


				Motivated by the structure of triangular matrices in Lemma~\ref{LHU}, we now examine how such matrices can be expressed as commutators. A matrix $(a_{jk}) \in \mathrm{M}_n(R)$ is said to have \emph{zero trace} if the sum of its diagonal entries, $a_{11} + \cdots + a_{nn}$, equals zero. The next result, which is known (see, e.g., \cite[Theorem~4]{Pa_Ka_14}), confirms that every triangular matrix with zero trace arises as a commutator.
				
				\begin{theorem}
					\label{triangular trace zero}
					Let $R$ be a unital associative ring and let $n \geq 2$ be an integer. Then every triangular matrix in $\mathrm{M}_n(R)$ with zero trace is a commutator.
				\end{theorem}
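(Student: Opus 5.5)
We will show that every lower triangular matrix $T=(t_{jk})\in\mathrm{M}_n(R)$ with $t_{11}+\cdots+t_{nn}=0$ can be written as $[X,Y]$. The upper triangular case then follows by symmetry: apply the transpose-type anti-automorphism, or equivalently conjugate by the permutation matrix reversing the order of the basis, which carries upper triangular matrices to lower triangular ones and commutators to commutators. No commutativity of $R$ is available, so the construction must be explicit and must avoid any division.

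The plan is to take $X$ to be the shift matrix $X=\sum_{j=1}^{n-1}e_{j+1,j}$, whose entries $1$ are central in $R$. We then look for $Y$ lower triangular. For $Y=(y_{jk})$ a direct computation gives
$$
(XY-YX)_{jk}=y_{j-1,k}-y_{j,k+1},
$$
with the conventions $y_{0,k}=0$ and $y_{j,n+1}=0$.

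Imposing $[X,Y]=T$ gives a system $y_{j-1,k}-y_{j,k+1}=t_{jk}$. Along each diagonal, that is, fixing $j-k$, this is a telescoping recurrence. On the main diagonal ($j=k$) it reads $y_{j-1,j}-y_{j,j+1}=t_{jj}$, which involves the superdiagonal of $Y$. It is therefore cleaner to allow $Y$ to have a superdiagonal and to solve diagonal by diagonal.

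Concretely, for the $d$-th subdiagonal of $T$ ($j-k=d\ge 0$) we need entries of $Y$ on the $(d-1)$-th subdiagonal, where $d-1=-1$ means the superdiagonal. The recurrence $y_{j-1,k}=t_{jk}+y_{j,k+1}$ can be solved by backward substitution starting from the boundary value $y_{n,n+1}=0$ (outside the matrix). Each diagonal is of length $n-d$ in $T$ and of length $n-d+1$ in $Y$, so the system is solvable for $d\ge1$. For $d=0$ the superdiagonal of $Y$ has only $n-1$ entries while there are $n$ equations, and the boundary conditions $y_{0,1}=0$ and $y_{n,n+1}=0$ both hold. Summing the equations gives exactly the compatibility condition $\sum_j t_{jj}=0$, which is where the trace hypothesis enters.

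The main point to check carefully is this bookkeeping on the main diagonal, together with verifying that the entries of $YX$ do not interfere across diagonals. Since $X$ is central-entried, $[X,Y]$ shifts each diagonal of $Y$ by exactly one and commutes with left and right ring multiplication, so noncommutativity of $R$ causes no trouble. Upper triangular $T$ is then handled by the reversal symmetry noted above, or by using $X=\sum_j e_{j,j+1}$ directly.
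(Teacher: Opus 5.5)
Your proof is correct. It supplies an argument the paper does not give: the paper takes this statement directly from \cite[Theorem~4]{Pa_Ka_14} without proof.

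With $X=\sum_{j=1}^{n-1}e_{j+1,j}$ one has $[X,Y]_{jk}=y_{j-1,k}-y_{j,k+1}$ over any unital ring. So the $d$-th diagonal of $[X,Y]$ depends only on the $(d-1)$-th diagonal of $Y$. Taking $Y$ zero strictly above the superdiagonal makes $[X,Y]$ lower triangular. For each $d\ge 1$ the chain of equations has one more unknown than equations, so it is always solvable. For $d=0$, the two boundary conventions $y_{0,1}=y_{n,n+1}=0$ leave exactly the condition $\mathrm{tr}\,T=0$.

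In closed form, the following choice works:
$y_{i,i-d+1}=\sum_{j=i+1}^{n}t_{j,j-d}$ for $0\le d\le n-1$ and $\max(d,1)\le i\le n-1$, with all other entries of $Y$ equal to $0$.

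Compared with the bare citation, your route gives a self-contained, division-free certificate. It shows that only addition in $R$ is used and pinpoints where the trace hypothesis enters (the $j=1$ equation on the main diagonal).

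Two small slips should be fixed.

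First, the transpose is not an anti-automorphism of $\mathrm{M}_n(R)$ when $R$ is noncommutative. It is an anti-isomorphism $\mathrm{M}_n(R)\to\mathrm{M}_n(R^{\mathrm{op}})$. It therefore works only if you apply the lower triangular case over $R^{\mathrm{op}}$, and it is not ``equivalent'' to the reversal conjugation. The clean choice is $A\mapsto JAJ$ with $J=\sum_{j}e_{j,n+1-j}=J^{-1}$. This is an inner automorphism, it reverses the order of the diagonal entries (so it preserves the trace), and it interchanges upper and lower triangular matrices.

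Second, for $d\ge 1$ the backward substitution does not start at $y_{n,n+1}$. The last entry $y_{n,n-d+1}$ of the $(d-1)$-th diagonal of $Y$ is a genuine free unknown, which you may set to $0$.
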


				To apply Theorem~\ref{triangular trace zero} effectively, we next establish that a trace-zero diagonal matrix with nonzero entries can indeed be constructed under mild assumptions on the field.
				
				\begin{lemma}\label{choose}
					Let $F$ be a field and let $n > 1$ be an integer. Then there exist nonzero elements  $x_1, x_2, \dots, x_n \in F$ such that $x_1 + x_2 + \cdots + x_n = 0$ in either of the following cases:
					\begin{enumerate}[\rm (i)]
						\item The field $F$ has at least three elements;
						\item The field $F$ has exactly two elements  and $n$ is even.
					\end{enumerate}
				\end{lemma}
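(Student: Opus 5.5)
We prove Lemma~\ref{choose} by explicit construction, treating the two cases separately.

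\emph{Case (ii).} If $F=\mathrm{GF}(2)$ and $n$ is even, the only nonzero element is $1$. We take $x_1=\cdots=x_n=1$; then $x_1+\cdots+x_n=n\cdot 1=0$ because $n$ is even. This choice is forced, which also explains why odd $n$ must be excluded over $\mathrm{GF}(2)$.

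\emph{Case (i).} Assume $|F|\ge 3$ and fix an element $a\in F\setminus\{0,1\}$. We split according to the parity of $n$.

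If $n$ is even, we pair entries as $1,-1,1,-1,\dots$. These are nonzero and sum to zero.

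If $n$ is odd, then $n\ge 3$, and it suffices to find three nonzero elements with sum zero. The remaining $n-3$ entries (an even number) can then be filled with pairs $1,-1$. For the triple, take $x_1=1$, $x_2=a$, $x_3=-(1+a)$. The first two are nonzero, and $x_3\neq 0$ provided $a\neq -1$.

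The only point needing care is the choice of $a$, namely avoiding $a=-1$. If $\mathrm{char}\,F\neq 2$, we have $-1\neq 1$, so we need $a\notin\{0,1,-1\}$. This is available when $|F|\ge 4$. When $|F|=3$, so $F=\mathrm{GF}(3)$, we instead use the triple $1,1,1$, which sums to $3=0$.

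If $\mathrm{char}\,F=2$, then $-1=1$, so any $a\notin\{0,1\}$ works. The triple is then $1,a,1+a$, all nonzero.

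This completes the construction in every case; no earlier result of the paper is needed.
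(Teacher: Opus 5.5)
Your proof is correct, but in case (i) it uses a different decomposition from the paper. The paper treats all $|F|\ge 3$ and all $n$ at once. It sets $x_1=\cdots=x_{n-2}=1$, $x_{n-1}=a$ and $x_n=-((n-2)+a)$, where $a$ only has to avoid the at most two values $0$ and $-(n-2)$. So no parity or characteristic split is needed, and your triple $1,a,-(1+a)$ is exactly this construction for $n=3$.

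Your parity split does buy some transparency. The pairs $(1,-1)$ work over every field, and over $\mathrm{GF}(2)$ they are precisely your case (ii). This shows that the hypothesis $|F|\ge 3$ is needed only for odd $n$.

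On the other hand, your characteristic split and the separate $\mathrm{GF}(3)$ case come only from requiring $a\neq 1$ at the outset. The triple needs just $a\notin\{0,-1\}$, which is always possible when $|F|\ge 3$. Over $\mathrm{GF}(3)$ this forces $a=1$ and yields your triple $1,1,1$.

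The paper's written proof carries the same superfluous condition $a\neq 1$. Taken literally together with $a\neq -(n-2)$, it rules out every element of $\mathrm{GF}(3)$ when $3\mid n$. Your explicit treatment of $\mathrm{GF}(3)$ happens to cover exactly that slip.
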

				
				\begin{proof}
					If $F$ has only two elements, say $F = \{0,1\}$, then the only nonzero element is $1$, and any $x_i \ne 0$ must be equal to $1$. In this case, the sum $x_1 + \dots + x_n$ is equal to $n \cdot 1$, which is $0$ if and only if $n$ is even. Hence, the result holds in this case precisely when $n$ is even.
					
					Now assume $F$ has at least three elements. Then  $F\setminus\{0\}$ contains at least two distinct elements. Choose $1$ and another nonzero element $a \ne 1$. Set
					$x_1 = \dots = x_{n-2} = 1$ and $x_{n-1} = a.$
					Then the partial sum is
					$S = (n-2)\cdot1 + a = n - 2 + a.$
					We avoid the unique $a$ that would make $S = 0$ by choosing $a \ne -(n-2)$. This is possible since $F$ has more than two elements. Then define
					$x_n = -\bigl((n - 2) + a\bigr).$
					By construction, $x_n \ne 0$ and the full sum satisfies
					$x_1 + \cdots + x_n = 0.$
					This completes the proof.
				\end{proof}

				The subsequent key step is to determine whether the matrix $\begin{pmatrix}
					B & \alpha \\
					0 & a
				\end{pmatrix}$
				is similar to
				$\begin{pmatrix}
					B & 0 \\
					0 & a
				\end{pmatrix},$	where \( B \in \mathrm{M}_n(D) \), \( \alpha \in \mathrm{M}_{n \times 1}(D) \), and \( a \in D \). To address this question, suppose there exists \( y \in \mathrm{M}_{n \times 1}(D) \) such that
				\[
				\begin{pmatrix}
					\mathrm{I}_n & y \\
					0 & 1
				\end{pmatrix}
				\begin{pmatrix}
					B & \alpha \\
					0 & a
				\end{pmatrix}
				\begin{pmatrix}
					\mathrm{I}_n & y \\
					0 & 1
				\end{pmatrix}^{-1}
				=
				\begin{pmatrix}
					B & 0 \\
					0 & a
				\end{pmatrix}.
				\]
				A straightforward computation shows that this is equivalent to the condition $ \alpha = By - ya.$ Hence, the problem reduces to the existence of a solution \( y \in \mathrm{M}_{n \times 1}(D) \) to the equation $By - ya = \alpha.$
				This is a special case of the Sylvester equation over a division ring. A sufficient condition for the solvability of this equation is provided by the following result of Bolotnikov.
				
				\begin{lemma} {\rm \cite[Lemma 3.1]{Bo_Bo_22}} \label{lem Bo}
					Let $D$ be a division ring with center $F$, and let \( A \in \mathrm{M}_n(D) \) and \( B \in \mathrm{M}_m(D) \), where \( n, m \) are positive integers. Suppose there exists a polynomial \( p \) in one variable with coefficients in $F$ such that \( p(A) = 0 \) and \( p(B) \) is invertible. Then, for any \( C \in \mathrm{M}_{n \times m}(D) \), the Sylvester equation
					$AX - XB = C$
					has a unique solution \( X \in \mathrm{M}_{n \times m}(D) \).
				\end{lemma}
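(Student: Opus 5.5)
The statement is Bolotnikov's lemma on the Sylvester equation $AX - XB = C$. I would prove it by viewing the map $T\colon X \mapsto AX - XB$ on the space $\mathrm{M}_{n\times m}(D)$ as an $F$-linear operator. Write $L_A(X) = AX$ and $R_B(X) = XB$. These are $F$-linear endomorphisms of $\mathrm{M}_{n\times m}(D)$, and they commute because $(AX)B = A(XB)$. Hence $T = L_A - R_B$. Since $p$ has coefficients in the center $F$, we have $p(L_A) = L_{p(A)}$ and $p(R_B) = R_{p(B)}$. Thus $p(L_A) = 0$, while $p(R_B)$ is the invertible operator $X \mapsto X\,p(B)$, with inverse $X \mapsto X\,p(B)^{-1}$.

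The key step is the commuting-operator identity. In the commutative polynomial ring $F[s,t]$, $s - t$ divides $p(s) - p(t)$, so $p(s) - p(t) = (s-t)\,q(s,t)$ for some $q \in F[s,t]$. Substituting the commuting operators $s = L_A$ and $t = R_B$ is legitimate, because they commute and $F$ is central. This gives
\[
p(L_A) - p(R_B) = T\, q(L_A, R_B) = q(L_A, R_B)\, T .
\]
The left side equals $-R_{p(B)}$, which is invertible. Set $S = -q(L_A,R_B)\,R_{p(B)}^{-1}$. Since $R_{p(B)}$ commutes with $L_A$ and $R_B$, and hence with $q(L_A,R_B)$, we get $TS = ST = \mathrm{id}$.

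Therefore $T$ is bijective. Existence follows by taking $X = S(C)$, and uniqueness follows from the injectivity of $T$.

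I expect the only delicate point to be the justification that commutativity holds. $L_A$ and $R_B$ commute by associativity. Applying polynomials to these operators respects the algebra structure only because the coefficients lie in $F$, so scalar multiplication by them commutes with both left and right multiplication. Noncentral coefficients would break the argument, which explains the hypothesis $p \in F[x]$. No finite-dimensionality is needed, since the inverse $S$ is written down explicitly.
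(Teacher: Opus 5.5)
Your argument is correct and complete. The paper does not prove this lemma; it quotes it from \cite[Lemma 3.1]{Bo_Bo_22} and remarks that it is a special case of Cohn's \cite[Lemma 2.3]{Pa_Co_73} for algebras over a common field.

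What you supply is a self-contained proof by the standard commuting-operator device. You factor $p(s)-p(t)=(s-t)q(s,t)$ in $F[s,t]$ and evaluate at the commuting $F$-linear maps $L_A$ and $R_B$. This yields the explicit two-sided inverse $S=-q(L_A,R_B)R_{p(B)}^{-1}$ of $T=L_A-R_B$. You use nothing about matrices beyond associativity and the centrality of $F$, so the same argument proves Cohn's version verbatim: $ax-xb=c$ is uniquely solvable in any $(R,S)$-bimodule over $F$-algebras $R,S$ on which $F$ acts centrally, whenever $p(a)=0$ and $p(b)$ is a unit.

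The citation buys the paper brevity, while your version makes the hypotheses transparent. Your remark that no finiteness is needed is the relevant point here. The map $T$ is $F$-linear but neither left nor right $D$-linear, and $\mathrm{M}_{n\times m}(D)$ is infinite-dimensional over $F$ exactly when $\dim_F D=\infty$. That is precisely the case in which the paper invokes the lemma, through Lemma~\ref{cheo khoi} in the proof of Theorem~\ref{mainthm}. In that case a dimension-count shortcut (injective $\Rightarrow$ surjective) is unavailable, whereas your explicit inverse gives existence and uniqueness at once.
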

				
				\begin{remark}
					Lemma~\ref{lem Bo} is a particular case of a more general result due to Cohn in \cite[Lemma 2.3]{Pa_Co_73}, which addresses algebras over a common field. However, since we are working specifically with matrices, we state the result in this more accessible matrix form.
				\end{remark}
				
				By applying Lemma~\ref{lem Bo}, we obtain the following criterion for similarity:
				
				\begin{lemma}\label{cheo khoi}
					Let $D$ be a division ring with center $F$, and let \( B \in \mathrm{M}_n(D) \), \( \alpha \in \mathrm{M}_{n \times 1}(D) \), and \( a \in D \). If there exists a polynomial \( p \) with coefficients in $F$ such that \( p(B) = 0 \) and \( p(a) \) is invertible, then $\begin{pmatrix}
						B & \alpha \\
						0 & a
					\end{pmatrix}$ is similar to
					$\begin{pmatrix}
						B & 0 \\
						0 & a
					\end{pmatrix}.$
				\end{lemma}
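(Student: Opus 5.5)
The plan is to reduce the similarity question to solving a Sylvester equation, using the computation recorded just before Lemma~\ref{lem Bo}. Conjugating by the block unitriangular matrix $\begin{pmatrix}\mathrm{I}_n & y\\ 0 & 1\end{pmatrix}$, whose inverse is $\begin{pmatrix}\mathrm{I}_n & -y\\ 0 & 1\end{pmatrix}$, we get
\[
\begin{pmatrix}\mathrm{I}_n & y\\ 0 & 1\end{pmatrix}\begin{pmatrix}B & \alpha\\ 0 & a\end{pmatrix}\begin{pmatrix}\mathrm{I}_n & -y\\ 0 & 1\end{pmatrix}=\begin{pmatrix}B & \alpha+ya-By\\ 0 & a\end{pmatrix}.
\]
So it suffices to find $y\in\mathrm{M}_{n\times 1}(D)$ with $By-ya=\alpha$.

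To find such a $y$, I will apply Lemma~\ref{lem Bo} with the following choices: $A:=B\in\mathrm{M}_n(D)$, the second matrix taken to be $a\in\mathrm{M}_1(D)=D$ (so $m=1$), and $C:=\alpha\in\mathrm{M}_{n\times 1}(D)$. By hypothesis the polynomial $p$ has coefficients in $F$, satisfies $p(B)=0$, and $p(a)$ is invertible in $D$. These are exactly the hypotheses of Lemma~\ref{lem Bo}, so the equation $BX-Xa=\alpha$ has a (unique) solution $X=y\in\mathrm{M}_{n\times1}(D)$.

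Substituting this $y$ makes the upper-right block $\alpha+ya-By$ vanish. Hence the conjugate is $\begin{pmatrix}B&0\\0&a\end{pmatrix}$, which gives the claimed similarity via an explicit invertible matrix.

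The only point needing care is an identification issue rather than a real obstacle. One must check that "$p(a)$ invertible" for $a\in D$, viewed as a $1\times1$ matrix, agrees with the hypothesis of Lemma~\ref{lem Bo}; this holds since $\mathrm{M}_1(D)=D$. One must also check that evaluating a central-coefficient polynomial is unambiguous, which holds because the coefficients commute with everything. The sign conventions in the conjugation must match the equation $By-ya=\alpha$, and the displayed product above confirms that they do.
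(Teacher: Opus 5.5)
Your proposal is correct and follows the paper's argument: conjugating by $\begin{pmatrix}\mathrm{I}_n & y\\ 0 & 1\end{pmatrix}$ reduces the similarity to solving the Sylvester equation $By-ya=\alpha$. That equation is then solved by Lemma~\ref{lem Bo} with $m=1$, the $1\times 1$ matrix $a$ in the second slot, and $C=\alpha$.
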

				
				With the foundational steps completed, we now turn to the proof of Theorem~\ref{mainthm}
				
				\begin{proof}[Proof of Theorem~\ref{mainthm}]
					Let $A \in \mathrm{M}_n(D)$. By Theorem~\ref{recent}, it suffices to consider the case when $A$ is nonsingular. If $A = \lambda \mathrm{I}_n$ for some $\lambda \in F\setminus\{0\}$, then since every element of $D$ is a product of two noncentral commutators in $D$, it follows that $\lambda = [a,b]\cdot [c,d]$ for some $a,b,c,d \in D$, and hence $A = [a \mathrm{I}_n, b \mathrm{I}_n]\cdot[c \mathrm{I}_n, d \mathrm{I}_n]$, as desired. Otherwise, assume $A \notin \{ \lambda \mathrm{I}_n \mid \lambda \in F \}$.
					
					Again by Theorem~\ref{recent}, it remains to address the case where $D$ is infinite-dimensional over its center $F$, and $n \geq 3$. Taking Lemma~\ref{choose} into account, we may choose nonzero elements $x_1, x_2, \dots, x_{n-1} \in F$ such that $x_1 + x_2 + \cdots + x_{n-1} = 0$. Define $h_i = x_i^2$ for $i \in \{1, \dots, n-1\}$. By Lemma~\ref{LHU}, there exists a nonsingular matrix $P \in \mathrm{M}_n(D)$ such that
					$
					P^{-1} A P = LHU,
					$ where $L \in \mathrm{LT}_n(D)$, $U \in \mathrm{UT}_n(D)$, and $H = \mathrm{diag}(h_1, \dots, h_{n-1}, h_n)$ for some nonzero $h_n \in D$.
					
					Since every element of $D$ is a product of two noncentral commutators in $D$, we can write $h_n = h_n' h_n''$, where $h_n', h_n'' \in D \setminus F$ are noncentral commutators. In what follows, we define the diagonal matrices $H' = \mathrm{diag}(x_1, \dots, x_{n-1}, h_n')$ and $H'' = \mathrm{diag}(x_1, \dots, x_{n-1}, h_n'')$, and set $L_1 = L H'$, $U_1 = H'' U$. Then, it is not difficult to verify  $
					P^{-1} A P = L_1 U_1.
					$ We can decompose $L_1$ and $U_1$ as:
					$$
					L_1 = \begin{pmatrix}
						L_2 & 0 \\
						\ell & h_n'
					\end{pmatrix}, \quad
					U_1 = \begin{pmatrix}
						U_2 & \mathbf{u} \\
						0 & h_n''
					\end{pmatrix},
					$$ where $L_2, U_2 \in \mathrm{M}_{n-1}(D)$ are lower and upper triangular matrices, respectively, with $x_1, \dots, x_{n-1}$ on their diagonals, and  $\mathbf{u}\in\mathrm{M}_{(n-1)\times 1}(D),\ell\in\mathrm{M}_{1\times (n-1)}(D)$. Moreover, since $x_1 + \cdots + x_{n-1} = 0$, it follows from Theorem~\ref{triangular trace zero} that $L_2$ and $U_2$ are additive commutators: $L_2 = [L_3, L_4]$ and $U_2 = [U_3, U_4]$ for some $L_3,L_4,U_3,U_4\in\mathrm{M}_{n-1}(D)$.
					
					Now, consider the polynomial $p(x) = (x - x_1)(x - x_2) \cdots (x - x_{n-1})$ in the variable $x$ with coefficients in $F$. This polynomial vanishes on both $L_2$ and $U_2$, but evaluates to an invertible element at both $h_n'$ and $h_n''$, since these are noncentral. By Lemma~\ref{cheo khoi}, $L_1$ is similar to $L_2 \oplus (h_n')$, and $U_1$ is similar to $U_2 \oplus (h_n'')$. Thus, there exist nonsingular matrices $P_1, P_2 \in \mathrm{M}_n(D)$ such that:
					$$
					P_1^{-1} L_1 P_1 = L_2 \oplus (h_n'), \quad P_2^{-1} U_1 P_2 = U_2 \oplus (h_n'').
					$$ Since $h_n' = [h_{n_1}', h_{n_2}']$ and $h_n'' = [h_{n_1}'', h_{n_2}'']$ for some $h_{n_1}',h_{n_2}',h_{n_1}'',h_{n_2}''\in D$, we can write:
					$$
					L_2 \oplus (h_n') = [L_3 \oplus (h_{n_1}'), L_4 \oplus (h_{n_2}')], \quad
					U_2 \oplus (h_n'') = [U_3 \oplus (h_{n_1}''), U_4 \oplus (h_{n_2}'')].
					$$ Conjugating back via $P_1$ and $P_2$, we obtain: \begin{eqnarray*}
						L_1 &=& [P_1 (L_3 \oplus (h_{n_1}')) P_1^{-1}, P_1 (L_4 \oplus (h_{n_2}')) P_1^{-1}],\\
						U_1 &=& [P_2 (U_3 \oplus (h_{n_1}'')) P_2^{-1}, P_2 (U_4 \oplus (h_{n_2}'')) P_2^{-1}].	
					\end{eqnarray*} Therefore, putting everything together, we conclude that
					\begin{eqnarray*}
						A&=&PL_1U_1P^{-1}=PL_1P^{-1}\cdot PU_1P^{-1}\\
						&=&[PP_1(L_3\oplus(h_{n_1}'))(PP_1)^{-1},PP_1(L_4\oplus (h_{n_2}'))(PP_1)^{-1}]\\
						&& [PP_2(U_3\oplus(h_{n_1}''))(PP_2)^{-1},PP_2(U_4\oplus (h_{n_2}''))(PP_2)^{-1}]
					\end{eqnarray*} is a product of two commutators in $\mathrm{M}_n(D)$, as claimed.
				\end{proof}
				
				\subsection{Algebraicity of division rings}
				
			  In Theorem~\ref{recent}, the finite-dimensional case was already addressed in \cite[Corollary~4.5]{Pa_Bre_25}, which is a consequence of  \cite[Theorem~4.4]{Pa_Bre_25}.
				Here, we again draw on \cite[Theorem~4.4]{Pa_Bre_25} to extend the conclusion of  \cite[Corollary~4.5]{Pa_Bre_25} from finite-dimensional division rings to a broader  class, namely, algebraic division rings.  Recall that a division ring $D$ is called \emph{algebraic} if every element $a \in D$ is algebraic over  its center $F$; that is, there exists a nonzero polynomial $f$ in one variable with coefficients in $F$ such that $f(a) = 0$.

				\begin{theorem} \label{algebraic}
					If $D$ is an algebraic division ring and $n\geq 2$ is an integer, then every matrix in $\mathrm{M}_n(D)$ can be expressed as a product of two commutators in $\mathrm{M}_n(D)$.
				\end{theorem}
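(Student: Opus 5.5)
By Theorem~\ref{recent}, every singular matrix is already a product of two commutators. So are all matrices when $D=F$, when $F$ is infinite, and when $\dim_F D<\infty$. It therefore remains to treat the case where $D$ is a noncommutative algebraic division ring with finite center $F$. Here the plan is to show that this case is in fact vacuous.

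The key step is a classical theorem of Jacobson type. Suppose $D$ is algebraic over a finite field $F$, and let $a\in D$. Then $F(a)$ is a commutative subfield that is algebraic over the finite field $F$, and $a$ has finite degree over $F$. Hence $F(a)$ is a finite field, so $a^{q^k}=a$ for some $k\geq 1$, where $q=|F|$. Thus $D$ satisfies Jacobson's condition that for each $a$ there is $n(a)>1$ with $a^{n(a)}=a$. By Jacobson's commutativity theorem, $D$ is commutative, and so $D=F$.

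Alternatively, one can argue more elementarily as in the proof of Wedderburn's little theorem. Take $a\in D\setminus F$. Then $F(a)$ is a finite field, and by the Skolem--Noether-type argument in Herstein's proof there is $b\in D$ with $bab^{-1}=a^{i}\neq a$. The division subring generated by $a$ and $b$ over $F$ is then finite. Wedderburn's theorem makes it commutative, which is a contradiction.

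Once $D=F$ is established, case (i) of Theorem~\ref{recent} (Botha's theorem) gives that every matrix in $\mathrm{M}_n(D)$ is a product of two commutators. The argument is then organized as follows. If $F$ is infinite, or if $\dim_F D<\infty$, apply cases (v)/(vi) of Theorem~\ref{recent}. Otherwise $F$ is finite, and Jacobson's theorem forces $D=F$, so case (i) applies.

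The main obstacle is justifying commutativity when $F$ is finite. I would cite Jacobson's theorem directly, as it is standard (Herstein, \emph{Noncommutative Rings}, Theorem 3.1.2). Its hypothesis follows from algebraicity together with the finiteness of finite extensions of a finite field.
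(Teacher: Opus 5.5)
Your proposal is correct and follows essentially the same route as the paper: reduce via Theorem~\ref{recent} to a noncommutative $D$ with finite center, then invoke Jacobson's theorem to show this case cannot occur. The paper cites Jacobson's theorem in the form ``a division ring algebraic over a finite field is commutative'' (Lam, (13.11)), while your derivation of $a^{q^k}=a$ from the finiteness of $F(a)$ is the standard bridge to the equivalent $x^{n(x)}=x$ form.
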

				
				\begin{proof}
					If $D$ is a field, the result follows from Theorem~\ref{recent}. We now consider the case where $D$ is a noncommutative division ring. Let $F$ denote the center of $D$. If $F$ is infinite, then the conclusion follows from Theorem~\ref{recent}. On the other hand, if $F$ is finite, then by Jacobson's theorem \cite[(13.11) Theorem]{Bo_Lam_91}, $D$ must be commutative, which is a contradiction. This completes the proof.
				\end{proof}
				
				In the work \cite{Pa_Mad_00}, Mahdavi-Hezavehi pointed out that Jacobson \cite{Bo_Ja_56} had already raised the following question: if $D$ is an algebraic division ring with center $F$, must every matrix in $\mathrm{M}_n(D)$ be algebraic over $F$? To address this problem, Mahdavi-Hezavehi outlined a possible strategy (see \cite[Corollaries~5.16 and~5.18]{Pa_Mad_00}).
				
				Independently of this line of thought, it is known from \cite[Theorem~3.2]{Pa_Co_73} that every matrix in $\mathrm{M}_n(D)$ can be similar to a block diagonal form consisting of an algebraic part and a transcendental part over $F$. Recall that a matrix $A \in \mathrm{M}_n(D)$ is called \emph{algebraic} over $F$ if it satisfies a polynomial equation in one variable with coefficients in $F$. On the other hand, if $f(A)$ is nonsingular for every nonzero polynomial $f$ in one variable with coefficients in $F$, then $A$ is said to be \emph{transcendental} over $F$.
				
				For the transcendental case, it is shown in \cite[Corollary, Section~3]{Pa_Co_73} that such a matrix is necessarily cyclic, and hence similar to a companion matrix. Moreover, \cite[Corollary~1, Section~4]{Pa_Co_73} further implies that there is a suitable extension division ring $K$ of $D$ such that every transcendental matrix is similar to a scalar matrix $\lambda \mathrm{I}_n$ with $\lambda \in K$.
				
			Turning to the algebraic setting, results of Djokovi\'c \cite[Proposition~6 and Theorem~7]{Pa_Dra_85} show that if a division ring \( D \) contains an algebraic closure of its center \( F \), then every matrix in \( \mathrm{M}_n(D) \) that is algebraic over \( F \) admits a generalized Jordan canonical form in \( \mathrm{M}_n(D) \). In particular, under this assumption, every matrix in \( \mathrm{M}_n(D) \) possesses a generalized Jordan normal form. Here, a generalized Jordan normal form in \( \mathrm{M}_n(D) \) refers to a matrix of the form
			\[
			J_{n}(\alpha,\beta) =
			\begin{pmatrix}
				\alpha & \beta & 0 & \cdots & 0 \\
				0 & \alpha & \beta & \cdots & 0 \\
				0 & 0 & \alpha & \ddots & \vdots \\
				\vdots & \vdots & \ddots & \ddots & \beta \\
				0 & 0 & \cdots & 0 & \alpha
			\end{pmatrix}
			\in \mathrm{M}_n(D).
			\]

			By the way, division rings \( D \) that contain an algebraic closure of their center \( F \) are well understood in the finite-dimensional case. Indeed, it is shown in \cite[(15.9) Theorem, Page 243]{Bo_Lam_91} that  if \( \dim_F D < \infty \), then \( D \) is necessarily a quaternion division algebra over \( F \), and \( F \) must be a real closed field. The infinite-dimensional case, where \( \dim_F D = \infty \), remains open.

			Moreover, it is shown in \cite[Theorem~7]{Pa_Dra_85} that if \( \alpha \) is separable over the center \( F \), then one may take \( \beta = 1 \). This observation suggests that, instead of assuming that \( D \) contains an algebraic closure of \( F \), it is natural to consider the case where \( F \) itself is algebraically closed. In this situation, for every matrix \( A \in \mathrm{M}_n(D) \), there exists \( P \in \mathrm{GL}_n(D) \) such that
			\[
			P^{-1} A P = \bigoplus_{i=1}^s J_{m_i}(\alpha_i,1),
			\]
			for some positive integers \( m_1, \dots, m_s \) with \( m_1 + \cdots + m_s = n \).
			
			This form is more convenient than the general version of the generalized Jordan normal form. If a block \( J_{m_i}(\alpha_i,1) \) has size \( m_i \ge 2 \), then the desired conclusion follows from a result of Botha. The main difficulty arises when there exists a block of size one, i.e., \( J_{m_i}(\alpha_i,1) = \alpha_i \). In this case, the problem reduces to determining whether \( \alpha_i \) can be expressed as a product of two additive commutators in \( D \). 	This leads to  characterize division rings \( D \) in which every element is a product of two additive commutators.

			Although Theorem~\ref{exist} establishes the existence of such division rings - and, in fact, shows that one can construct examples whose center is any prescribed field - the general structure of these division rings remains unclear.

			These considerations suggest a natural approach to Question~\ref{que}, linking the algebraicity of division rings with the structure theory of matrices over them.

				\subsection{Finitary matrices}
				
				The only remaining case about the field $F$ not yet settled in Theorem~\ref{mainthm} is when the field $F$ has exactly two elements and the matrix size $n$ is even. This particular scenario poses a challenge due to the restriction on  $n$, which prevents the direct use of our earlier techniques. By the way, this also suggests we shift our attention to a broader context, namely the algebra of finitary matrices. In this framework, we can invoke Theorem~\ref{recent} directly to complete the analysis.
				
				For the reader's convenience, we briefly recall the construction of the finitary matrix algebra and defer its proof to a later section. Let $D$ be a division ring. For each positive integer $n$, there is a natural embedding of $\mathrm{M}_n(D)$ into $\mathrm{M}_{n+1}(D)$, defined by mapping a matrix $A$ to $A \oplus (0)$, where the zero matrix is appended as a new row and column. This produces an ascending chain of matrix algebras:
				\[
				\mathrm{M}_1(D) \subseteq \mathrm{M}_2(D) \subseteq \cdots \subseteq \mathrm{M}_n(D) \subseteq \cdots.
				\]
				Taking the union of this chain yields the \emph{finitary matrix algebra} over $D$, denoted by $\mathrm{M}_\infty(D)$:
				\[
				\mathrm{M}_\infty(D) = \bigcup_{n \geq 1} \mathrm{M}_n(D).
				\]
				Each element of $\mathrm{M}_\infty(D)$ is a countably infinite matrix with only finitely many nonzero entries. In particular, every such element can be expressed as $A \oplus 0$ for some finite matrix $A \in \mathrm{M}_n(F)$, where $n \geq 1$ and $0$ denotes the infinite zero matrix. When necessary, we may assume $n \geq 2$ without loss of generality.
				
				With this setting established, we now turn to the following result concerning finitary matrix algebras.
				
				\begin{theorem}\label{finitary1}
					If $D$ is a division ring, then every element in $\mathrm{M}_\infty(D)$ can be expressed as a product of two commutators in $\mathrm{M}_\infty(D)$.
				\end{theorem}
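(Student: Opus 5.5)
Let $X \in \mathrm{M}_\infty(D)$. We first realize $X$ inside a finite corner: by definition $X = A \oplus 0$ for some $A \in \mathrm{M}_n(D)$. Since we are free to enlarge $n$ by appending zero rows and columns, we may take $n \geq 2$. The idea is to apply Theorem~\ref{recent}(iv) to a \emph{singular} finite matrix obtained by padding $A$ with one extra zero row and column. That case holds for every division ring $D$, with no restriction on the center, so it avoids the difficulty with $|F|=2$ and $n$ even that Theorem~\ref{mainthm} could not handle.

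Concretely, set $A' = A \oplus (0) \in \mathrm{M}_{n+1}(D)$. This matrix is singular, since its last row is zero. By Theorem~\ref{recent}(iv), applied with $n+1 \geq 3$, there are $B_1, B_2, B_3, B_4 \in \mathrm{M}_{n+1}(D)$ with
\[
A' = [B_1, B_2]\,[B_3, B_4].
\]

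Next we embed into $\mathrm{M}_\infty(D)$. The map $\iota : \mathrm{M}_{n+1}(D) \to \mathrm{M}_\infty(D)$, $Y \mapsto Y \oplus 0$, is a non-unital ring homomorphism: it preserves sums and products because block-diagonal multiplication with a zero block is componentwise. Hence $\iota([Y,Z]) = [\iota(Y), \iota(Z)]$ for all $Y, Z$, and
\[
X = \iota(A') = [\iota(B_1), \iota(B_2)]\,[\iota(B_3), \iota(B_4)]
\]
is a product of two commutators in $\mathrm{M}_\infty(D)$.

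There is essentially no obstacle beyond checking that $\iota$ is multiplicative and that $A \oplus 0$ and $A' \oplus 0$ are the same element of $\mathrm{M}_\infty(D)$. Both facts are immediate from the construction of $\mathrm{M}_\infty(D)$ as a directed union. The only subtle point is that Theorem~\ref{recent}(iv) really covers singular matrices over an arbitrary division ring, including those with center $\mathrm{GF}(2)$. That is exactly how it is stated, drawn from \cite[Proposition 4.7]{Pa_Bre_25}, so the argument is uniform in $D$. If one preferred not to rely on (iv), a fallback would be to pad $A$ to a size $m$ that avoids the exceptional case in Theorem~\ref{mainthm}, but this needs extra hypotheses on $D$, so the singular-padding route is the one to use.
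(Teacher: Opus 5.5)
Your proposal is correct and follows essentially the same route as the paper. Both arguments pad the finite block with a zero row and column to make it singular, apply Theorem~\ref{recent}(iv) to write it as a product of two commutators in the finite matrix ring, and transfer that factorization to $\mathrm{M}_\infty(D)$ via the multiplicative embedding $Y \mapsto Y \oplus 0$.
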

				
				\begin{proof}
					Take any matrix \( A \in \mathrm{M}_\infty(D) \). Then, we can write \( A = A' \oplus 0 \) for some \( A' \in \mathrm{M}_n(D) \) with \( n \geq 2 \). If necessary, we may enlarge \( A' \) to make it singular by adding a zero row and column. Taking Theorem~\ref{recent} into account, \( A' \) can be expressed as a product of two commutators in \( \mathrm{M}_n(D) \), say \( A' = [B, C]\cdot [D,E] \) for some $B,C,D,E\in\mathrm{M}_n(D)$. It follows that \( A = [B \oplus 0, C \oplus 0]\cdot[D \oplus 0, E \oplus 0] \), as required.
				\end{proof}
				
				\begin{remark}
					An alternative proof of Theorem~\ref{finitary1} can be obtained by expressing matrices as products of two nilpotent matrices.
					Let $A \in \mathrm{M}_\infty(D)$ be arbitrary. We may write $A$ in the form $A = A' \oplus 0$ for some $A' \in \mathrm{M}_n(D)$ with $n \ge 2$.
					Next, by adjoining an extra zero row and column, define $A'' = A' \oplus (0)$.
					A straightforward computation shows that
					\[
					A'' =
					\begin{pmatrix}
						0 & A' \\
						0 & 0
					\end{pmatrix}
					\begin{pmatrix}
						0 & \cdots & \cdots & 0 \\
						1 & \ddots &        & \vdots \\
						& \ddots & \ddots & \vdots \\
						&        & 1      & 0
					\end{pmatrix}.
					\]
					By Theorem~\ref{triangular trace zero}, the matrix $A''$ can be written as a product of two commutators in $\mathrm{M}_{n+1}(D)$; that is,
					$A'' = [B, C] \cdot [D, E]$
					for some $B, C, D, E \in \mathrm{M}_{n+1}(D)$.
					Consequently,
					$$
					A = [B \oplus 0,\, C \oplus 0] \cdot [D \oplus 0,\, E \oplus 0],
					$$
					which completes the argument.
				\end{remark}
				
				\section{On the images of decomposable multilinear polynomial maps}
				
				The problem of expressing elements as products of two additive commutators can be placed in a broader perspective. Indeed, it naturally leads to asking whether the image of the multilinear polynomial $f = [x_1, x_2][x_3, x_4]$
				coincides with the entire algebra. This question is closely related to the L’vov--Kaplansky conjecture, which predicts that the image of any multilinear polynomial evaluated on a matrix algebra always forms a vector space.
				
				Motivated by the desire to reduce the complexity of multilinear polynomials whenever possible, it is natural to consider their factorization. This brings us to the following guiding question: if a multilinear polynomial \( p \) is reducible, or more precisely decomposable, does this impose any structure on its image? In this work, we adopt the term \textit{decomposable}, following terminology used in the study of decomposable word maps (see \cite[Page 610]{Pa_Gnu_20}). It is worth noting that the theory of word maps develops in parallel with the study of images of noncommutative polynomials and shares several conceptual similarities. More precisely, we say that a multilinear polynomial is \textit{decomposable} if it can be written as a product of two multilinear polynomials whose sets of variables are disjoint.
				
				One reason for focusing on \textit{products} of multilinear polynomials comes from a simple but useful structural observation: when two multilinear polynomials involve disjoint sets of variables, their product remains multilinear, and its set of variables is exactly the union of the original ones. This elementary remark supports the intuition that the phenomenon predicted by the L’vov-Kaplansky conjecture - that \( p(\mathrm{M}_n(F)) \) is a vector space - may be more common than initially expected. More generally, one can show that \textit{multilinearity is preserved under factorization}: if a multilinear polynomial factors as \( p = p_1 p_2 \cdots p_k \), with none of the factors constant, then each \( p_i \) must itself be multilinear. Conversely, the product of multilinear polynomials with disjoint variable sets is again multilinear.
				
				A natural weakening of the L’vov-Kaplansky conjecture is the so-called Mesyan conjecture \cite[Conjecture 11]{Pa_Me_13}. It states that if \( F \) is a field, \( n \ge 2 \), and \( p \in F\langle x_1, \ldots, x_m \rangle \) is a multilinear polynomial with \( n \ge m-1 \), then the image \( p(\mathrm{M}_n(F)) \) contains all trace-zero matrices in $\mathrm{M}_n(F)$. The term ``weakening'' reflects the fact that a positive solution to the L’vov-Kaplansky conjecture would immediately imply the validity of the Mesyan conjecture. Consequently, any counterexample to the Mesyan conjecture would also disprove the L’vov-Kaplansky conjecture. Furthermore, the paper \cite{Pa_Fa_22} presents formulates a broader version of the Mesyan conjecture (see \cite[Conjecture 3.3]{Pa_Fa_22}). Although the conjecture remains open in general, a related version has been confirmed for finitary matrices over infinite fields; see \cite[Corollary~1.2]{Pa_Vitas_21}.
				
				We now turn to a result of Vitas concerning another version of the Mesyan conjecture in the setting of finitary matrices over an infinite field; see \cite[Corollary~1.2]{Pa_Vitas_21}.
				
				\begin{theorem} \label{Vitas}
					Let \( F \) be an infinite field and let \( p \in F\langle \mathcal{X} \rangle \) be a multilinear polynomial such that \( p(\mathrm{M}_\infty(F)) \neq \{0\} \). Then the image \( p(\mathrm{M}_\infty(F)) \) contains all trace-zero matrices in \( \mathrm{M}_\infty(F) \).
				\end{theorem}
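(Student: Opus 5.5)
The plan is to exploit two features of $\mathrm{M}_\infty(F)$ that are absent in $\mathrm{M}_n(F)$: unlimited room for block constructions, and invariance of the image under conjugation and direct sums. First I would record three elementary closure properties of $p(\mathrm{M}_\infty(F))$, all coming from multilinearity.
\begin{enumerate}[\rm (i)]
\item It is invariant under conjugation by invertible finitary-plus-identity matrices.
\item It is closed under multiplication by scalars of $F$.
\item It is closed under orthogonal direct sums: if $a_1,\dots,a_m$ are supported on a set of indices $S$ and $b_1,\dots,b_m$ on a disjoint set $T$, then $p(a_1+b_1,\dots,a_m+b_m)=p(a)+p(b)$, because mixed monomials vanish.
\end{enumerate}
Next, since $p\ne 0$ on $\mathrm{M}_\infty(F)$ and $p$ is multilinear, there are matrix units $e_{i_1j_1},\dots,e_{i_mj_m}$ on which $p$ is nonzero. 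By re-indexing, the value is either a nonzero multiple of some $e_{kl}$ with $k\ne l$, or a nonzero diagonal matrix of trace zero.

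The core step is to pass from matrix units to arbitrary trace-zero matrices by a tensoring trick. I would write $\mathrm{M}_\infty(F)\cong \mathrm{M}_N(F)\otimes \mathrm{M}_\infty(F)$ and, for a fixed $A\in \mathrm{M}_N(F)$ of trace zero, substitute variables of the form $B_r\otimes e_{i_rj_r}$. Multilinearity then gives
$$p(B_1\otimes e_{i_1j_1},\dots,B_m\otimes e_{i_mj_m})=\sum_{\sigma}\lambda_\sigma\, B_{\sigma(1)}\cdots B_{\sigma(m)}\otimes e_{i_{\sigma(1)}j_{\sigma(1)}}\cdots e_{i_{\sigma(m)}j_{\sigma(m)}}.$$
By choosing the unit pattern so that only a controlled set of orderings survives, and by taking all but one or two of the $B_r$ equal to the identity, I aim to show that the image contains $X\otimes e_{kl}$ for arbitrary $X$, or $[X,Y]\otimes e_{kk}$, or $XY\otimes e_{kk}-YX\otimes e_{ll}$. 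Two further reductions then apply. By the classical fact that a trace-zero matrix over a field is similar to one with zero diagonal, and is a commutator $[X,Y]$, property (i) reduces the problem to a single normal form. Combined with (iii) and the extra indices, the aim is to realize $A\oplus 0$ itself.

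The main obstacle is exactly this last realization. The image is not obviously additive, and the natural outputs look like $A\otimes e_{kl}$, which is square-zero, or $XY\oplus(-YX)$, rather than $A\oplus 0$. What I would try first is the following. Write $A=[X,Y]$ and use the infinite field to choose $X$ (after similarity) with suitable spectral genericity. Then pick the decomposition so that $XY\oplus(-YX)$ is conjugate in a larger $\mathrm{M}_{2N}(F)$ to $A\oplus 0$ plus a piece that can be cancelled by an orthogonal direct-sum summand supplied by (iii). The infinitude of $F$ should also be used through a Zariski-density argument: the set of matrices conjugate to the desired form is dense, and polynomial identities in the parameters of $X$ and $Y$ that would obstruct the conjugacy must fail for some choice of parameters. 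If this cancellation does not go through directly, the fallback is to invoke the Brešar--Klep result that the linear span of $p(\mathrm{M}_\infty(F))$ is a Lie ideal, hence contains all trace-zero matrices. I would then upgrade ``span'' to ``image'' by writing a finite sum of values as a single value, using disjoint blocks together with a conjugation that merges the blocks. This merging step is where the real work lies.
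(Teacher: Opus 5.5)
The paper does not prove this statement; it quotes it from Vitas (Corollary~1.2 of the cited work). Your sketch therefore has to stand on its own, and as written it does not. Properties (i)--(iii) and the tensor identity are fine. But the step that carries all the content is left open: obtaining a prescribed trace-zero $A\oplus 0$ as a \emph{single} value of $p$. Both mechanisms you propose for it fail.

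\begin{itemize}
\item \textbf{What the tensor trick gives.} With a staircase of matrix units it robustly gives only $\lambda X\otimes e_{kl}$ with $k\neq l$, i.e.\ square-zero matrices. Once the units close into a cycle, every rotation of the cycle survives. The value is then a direct sum over all starting vertices, with coefficients $\lambda_\sigma$ you do not control, so nothing like $[X,Y]\otimes e_{kk}$ comes out in general. (Also, a diagonal value on matrix units need not have trace zero: $x_1x_2$ at $(e_{12},e_{21})$ gives $e_{11}$.)
\item \textbf{Cancellation.} This cannot work, because (iii) only appends blocks on disjoint support. If $V\oplus W$ is similar to $A\oplus 0$, then the elementary divisors of $V$ form a sub-multiset of those of $A\oplus 0$. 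For $V=XY\oplus(-YX)$ the nonzero spectrum is symmetric under $\mu\mapsto-\mu$. So for $A=\mathrm{diag}(1,1,-2)$ in characteristic $0$ this forces $V=0$ and $W$ similar to $A\oplus 0$, which is circular.
\item \textbf{The fallback.} It fails for the same reason. In general $v_1\oplus v_2$ is not similar to $v_1+v_2$, so disjoint blocks plus conjugation do not turn elements of the Bre\v{s}ar--Klep span into values. Passing from the span to the image is precisely the L'vov--Kaplansky-type difficulty, i.e.\ the theorem itself.
\item \textbf{Zariski density.} The remark does not help: density of a set says nothing about membership of a given matrix.
\end{itemize}

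What is missing is a substitution that outputs a prescribed matrix directly. One such substitution is as follows.
\begin{itemize}
\item Put diagonal matrices $D_r=\mathrm{diag}(d^{(r)}_1,\dots,d^{(r)}_n)$ in every variable except $x_s$, and a zero-diagonal $X$ in $x_s$. Write $\mathbf d_i=(d^{(r)}_i)_{r\neq s}$. Then $p(\dots)_{ij}=\phi_s(\mathbf d_i,\mathbf d_j)X_{ij}$, where $\phi_s(\mathbf u,\mathbf v)=\sum_\sigma\lambda_\sigma\prod_{r\text{ before }x_s}u_r\prod_{r\text{ after }x_s}v_r$.
\item If some $\phi_s\neq 0$, the infinitude of $F$ gives $\mathbf d_1,\dots,\mathbf d_n$ with $\prod_{i\neq j}\phi_s(\mathbf d_i,\mathbf d_j)\neq 0$. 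Setting $X_{ij}=A_{ij}/\phi_s(\mathbf d_i,\mathbf d_j)$ then realizes every zero-diagonal $A$.
\item Combine this with the zero-diagonal normal form you quote. It is valid for non-scalar matrices, and every nonzero finitary matrix is non-scalar in a large enough block. This settles all such $p$.
\end{itemize}
However, there are $p$ for which every $\phi_s$ vanishes identically. An example is $[x_1,x_2][x_3,x_4]$, which is zero whenever three of its arguments are diagonal. That degenerate case is where a genuinely new idea, exploiting the extra room in $\mathrm{M}_\infty(F)$, is required. Nothing in your proposal addresses it.
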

				
				Building on Theorem \ref{Vitas}, we obtain the following theorem.
				
				\begin{theorem} \label{Vitas1}
					Let \( F \) be an infinite field and let  \( p = p_1 p_2 \in F\langle \mathcal{X} \rangle \), where \( p_1, p_2 \in F\langle \mathcal{X} \rangle \) are multilinear polynomials in disjoint sets of variables such that \( p_1(\mathrm{M}_\infty(F)) \neq \{0\} \) and \( p_2(\mathrm{M}_\infty(F)) \neq \{0\} \). Then
					$p(\mathrm{M}_\infty(F)) = \mathrm{M}_\infty(F).$
				\end{theorem}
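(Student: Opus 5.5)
The plan is to write an arbitrary $A\in \mathrm{M}_\infty(F)$ as a product $XY$ of two trace-zero finitary matrices, with $X$ and $Y$ supported on disjoint-enough blocks. Then Theorem~\ref{Vitas} shows that each factor is a value of the corresponding polynomial: $X=p_1(a_1,\dots,a_k)$ and $Y=p_2(b_1,\dots,b_l)$ with $a_i,b_j\in \mathrm{M}_\infty(F)$. Since $p_1$ and $p_2$ involve disjoint sets of variables, we may substitute the $a_i$ into the variables of $p_1$ and the $b_j$ into those of $p_2$ independently. This gives $p(a_1,\dots,a_k,b_1,\dots,b_l)=XY=A$. The inclusion $p(\mathrm{M}_\infty(F))\subseteq \mathrm{M}_\infty(F)$ is trivial, so only this factorization is needed.

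For the factorization, fix $A$ and write $A=A'\oplus 0$ with $A'\in \mathrm{M}_n(F)$. Embed in $\mathrm{M}_{2n}(F)$, using the trick from the Remark after Theorem~\ref{finitary1}:
$$
\begin{pmatrix} A' & 0\\ 0 & 0\end{pmatrix}=\begin{pmatrix} 0 & A'\\ 0 & 0\end{pmatrix}\begin{pmatrix} 0 & 0\\ \mathrm{I}_n & 0\end{pmatrix}.
$$
Both factors are strictly block-triangular, hence nilpotent, and in particular trace zero. Padding each with the infinite zero block gives trace-zero elements $X,Y$ of $\mathrm{M}_\infty(F)$ with $XY=A$. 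Here $\mathrm{I}_n$ denotes the identity of $\mathrm{M}_n(F)$, so everything stays within finitary matrices.

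It remains to apply Theorem~\ref{Vitas} to $p_1$ and to $p_2$ separately. This is permitted because each $p_i$ is multilinear with $p_i(\mathrm{M}_\infty(F))\neq\{0\}$ and $F$ is infinite. Hence $X\in p_1(\mathrm{M}_\infty(F))$ and $Y\in p_2(\mathrm{M}_\infty(F))$.

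The main point of care, rather than a real obstacle, is the bookkeeping of variables. We must check that $p(x_1,\dots,x_{k+l})=p_1(x_{i_1},\dots)\,p_2(x_{j_1},\dots)$ with the index sets disjoint, so that the evaluations of $p_1$ and $p_2$ can be chosen independently. This is exactly what the disjointness hypothesis provides; without it the factorization $XY$ could not be realized by a single evaluation of $p$.
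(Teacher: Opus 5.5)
Your proof is correct. It differs from the paper's only in how $A$ is split into two trace-zero finitary factors; the concluding step is the same (Theorem~\ref{Vitas} applied to $p_1$ and $p_2$ separately, then evaluation on disjoint sets of variables).

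The paper stays in the original size. It invokes Botha's theorem \cite[Theorem 4.1]{Pa_Botha_97}, which writes every $A'\in\mathrm{M}_n(F)$ as $A'=BC$ with $B,C$ trace-zero matrices in $\mathrm{M}_n(F)$ (equivalently, two commutators), and then pads $B$ and $C$ with zeros. You instead use the spare room in $\mathrm{M}_\infty(F)$: passing to $\mathrm{M}_{2n}(F)$ gives an explicit factorization of $A'\oplus 0$ into two square-zero matrices, so no deep factorization theorem is needed. The paper uses essentially this device only in the Remark after Theorem~\ref{finitary1}.

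The paper's route is size-preserving, which is what a fixed-size analogue would require. For example, it would yield $p(\mathrm{M}_n(F))=\mathrm{M}_n(F)$ if each $p_i(\mathrm{M}_n(F))$ were known to contain all trace-zero matrices of $\mathrm{M}_n(F)$; your doubling trick does not survive in fixed size.

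Your route is elementary, and its factorization step works verbatim over any unital ring, with nilpotent factors. Over a division ring $D$ with infinite center $F$, you could combine it with Lemma~\ref{luy linh} (nilpotent matrices are similar to direct sums of blocks $J_m(0)$, whose entries lie in $F$). Together with the invariance of $p_i(\mathrm{M}_\infty(D))$ under conjugation, this would give a considerably shorter proof of Theorem~\ref{Vitas2} than the paper's $LHU$-based argument.
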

				
				\begin{proof}
					Let \( A \in \mathrm{M}_\infty(F) \). Then there exists an integer \( n > 1 \) and a matrix \( A' \in \mathrm{M}_n(F) \) such that
					$
					A =
					\begin{pmatrix}
						A' & 0 \\
						0 & 0
					\end{pmatrix}.
					$
					By \cite[Theorem 4.1]{Pa_Botha_97}, we can write \( A' = BC \) for some matrices \( B, C \in \mathrm{M}_n(F) \) with trace zero. It follows that
					$
					\begin{pmatrix}
						B & 0 \\
						0 & 0
					\end{pmatrix}
					$ and $
					\begin{pmatrix}
						C & 0 \\
						0 & 0
					\end{pmatrix}
					$
					are trace-zero matrices in \( \mathrm{M}_\infty(F) \), and hence
					\[
					A =
					\begin{pmatrix}
						A' & 0 \\
						0 & 0
					\end{pmatrix}
					=
					\begin{pmatrix}
						B & 0 \\
						0 & 0
					\end{pmatrix}
					\begin{pmatrix}
						C & 0 \\
						0 & 0
					\end{pmatrix}.
					\]
					By Theorem \ref{Vitas}, we have
					\[
					\begin{pmatrix}
						B & 0 \\
						0 & 0
					\end{pmatrix} \in p_1(\mathrm{M}_\infty(F)),
					\qquad
					\begin{pmatrix}
						C & 0 \\
						0 & 0
					\end{pmatrix} \in p_2(\mathrm{M}_\infty(F)).
					\]
					Therefore, \( A \in p(\mathrm{M}_\infty(F)) \), which completes the proof.
				\end{proof}
				
				In the setting of noncommutative division rings, the situation becomes considerably more delicate. Nevertheless, we are able to obtain results that parallel those established in the commutative case.
				
				\begin{theorem}\label{Vitas2}
					Let \( D \) be a division ring with center \( F \), and let \( p \in F\langle \mathcal{X} \rangle \). Assume that \( F \) is infinite and that \( p = p_1 p_2 \), where \( p_1, p_2 \in F\langle \mathcal{X} \rangle \) are multilinear polynomials in disjoint sets of variables such that \( p_1(\mathrm{M}_\infty(F)) \neq \{0\} \) and \( p_2(\mathrm{M}_\infty(F)) \neq \{0\} \). Then
					\[
					p(\mathrm{M}_\infty(D)) = \mathrm{M}_\infty(D).
					\]
				\end{theorem}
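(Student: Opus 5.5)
We follow the proof of Theorem~\ref{Vitas1}, replacing Botha's factorization over a field by a factorization over $D$. Let $A\in \mathrm{M}_\infty(D)$ and write $A=A'\oplus 0$ with $A'\in\mathrm{M}_n(D)$ and $n\ge 2$. As in Theorem~\ref{finitary1}, we enlarge $A'$ by a zero row and column so that it is singular. Then Theorem~\ref{recent}(iv), or alternatively the nilpotent factorization in the remark after Theorem~\ref{finitary1} combined with Theorem~\ref{triangular trace zero}, gives $A'=BC$ where $B,C\in\mathrm{M}_{n}(D)$ are additive commutators. Padding with zeros, $A=(B\oplus 0)(C\oplus 0)$, and each factor is a commutator in $\mathrm{M}_\infty(D)$. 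Since $p_1$ and $p_2$ have disjoint variables, it suffices to show that every commutator of $\mathrm{M}_\infty(D)$ lies in $p_i(\mathrm{M}_\infty(D))$ for $i=1,2$.

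The key step is an analogue of Theorem~\ref{Vitas} over $D$: if $q\in F\langle\mathcal X\rangle$ is multilinear with $q(\mathrm{M}_\infty(F))\ne\{0\}$, then $q(\mathrm{M}_\infty(D))$ contains every commutator $[X,Y]$ of $\mathrm{M}_\infty(D)$. The trace-zero matrices of Theorem~\ref{Vitas} are not available over $D$, so we work with commutators instead. The plan is to first reduce a commutator $[X,Y]\in\mathrm{M}_m(D)$, up to similarity over $D$, to a normal form built from $F$-rational pieces. Lemma~\ref{Rowen} conjugates a noncentral matrix so that its $(1,1)$ entry is $0$. Iterating this together with the space available in $\mathrm{M}_\infty$, a commutator $T\oplus 0$ can be conjugated to a matrix with zero diagonal. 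Then we write it as a sum $\sum_{i\neq j} d_{ij}e_{ij}$ with $d_{ij}\in D$.

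For zero-diagonal matrices we use the standard technique behind Vitas's theorem, namely substitution of matrix units. Since $q$ is multilinear and not an identity of $\mathrm{M}_\infty(F)$, there are matrix units at which $q$ evaluates to a nonzero multiple $\lambda e_{ij}$ with $i\ne j$, or more generally to a nonzero off-diagonal pattern. Because $F$ is central in $D$, multilinearity lets us place arbitrary coefficients $d\in D$ on one variable, which yields every $d\,e_{ij}$. To obtain a full zero-diagonal matrix in a single evaluation rather than as a sum, we would adapt Vitas's construction. His argument for $F$ infinite produces, in a block $\mathrm{M}_N(F)$, evaluations equal to a generic zero-diagonal (or cyclic-shift type) matrix $\sum_i e_{i,i+1}$. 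Conjugating and scaling one variable by a diagonal matrix with entries in $D$ should then realize $\sum_i d_i e_{i,i+1}$. Finally, every matrix over $D$ with a suitable zero-diagonal form is similar to a companion-like matrix of that type (Lemma~\ref{Rowen} applied repeatedly), possibly after enlarging the size inside $\mathrm{M}_\infty(D)$.

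The main obstacle is exactly this transfer of Theorem~\ref{Vitas} from $F$ to $D$. The difficulty is that coefficients from $D$ can be injected into only one variable without breaking the evaluation, while multilinearity over $D$ fails in the remaining variables. The first approach we would try is to choose the evaluation so that the relevant variable appears with a factor such as $D_d=\mathrm{diag}(d_1,\dots,d_N)$ that commutes past the $F$-matrix units only in a controlled way. Alternatively, we would use that $p_i(\mathrm{M}_\infty(F))\supseteq$ trace-zero matrices over $F$. Then we would try to factor $A'$ directly as $A'=B\,C$ with $B\in\mathrm{M}_N(F)$ of trace zero, which places $B\oplus0$ in $p_1(\mathrm{M}_\infty(F))$, and with $C$ similar to some $C_0\in\mathrm{M}_N(F)$ of trace zero via a $D$-similarity $C=P C_0 P^{-1}$. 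Since $p_2$ is evaluated with conjugated $F$-arguments, $C\oplus 0\in p_2(\mathrm{M}_\infty(D))$. Arranging this factorization, with the enlargement of $N$ exploited to make $B$ an invertible trace-zero scalar-type matrix over $F$, is where we expect most of the work to lie.
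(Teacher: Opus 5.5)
\textbf{There is a genuine gap: the key step is never proved, and the route sketched toward it fails.}

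After writing $A=(B\oplus 0)(C\oplus 0)$ with $B,C$ arbitrary commutators from Theorem~\ref{recent}(iv), you need a Vitas-type statement over $D$: every commutator of $\mathrm{M}_\infty(D)$ lies in $q(\mathrm{M}_\infty(D))$. This is much stronger than the theorem requires. It is not proved in your proposal, and it is not available in the paper.

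Your proposed normal form for it is false. Suppose $D$ is noncommutative and $0\neq d=[a,b]$. Then $de_{11}=[ae_{11},be_{11}]$ is a commutator. For any invertible $P$, the diagonal entries of $P(de_{11})P^{-1}$ are $P_{i1}\,d\,(P^{-1})_{1i}$. These cannot all vanish, because $\sum_i (P^{-1})_{1i}P_{i1}=1$. So no amount of room in $\mathrm{M}_\infty(D)$ conjugates $de_{11}$ to a zero-diagonal matrix.

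The other pieces of the sketch do not close the gap either:
\begin{itemize}
\item Every matrix $\sum_i d_ie_{i,i+1}$ is nilpotent, so matching against such matrices can only handle nilpotent commutators.
\item A non-scalar diagonal matrix over $D$ placed in one argument does not commute past the other $F$-arguments.
\item The ``scalar-type'' $B$ in your fallback cannot work. Then $C$ is a scalar multiple of $A'$, which need not be algebraic over $F$ (take $A'=he_{11}$ with $h$ transcendental over $F$). Anything $D$-similar to a matrix over $F$ is algebraic over $F$.
\end{itemize}

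\textbf{How to close it.} The fallback framework is the right one; what is missing is a one-line factorization. In $\mathrm{M}_{2n}(D)$,
\[
\begin{pmatrix} A' & 0\\ 0 & 0\end{pmatrix}=\begin{pmatrix} 0 & \mathrm{I}_n\\ \mathrm{I}_n & 0\end{pmatrix}\begin{pmatrix} 0 & 0\\ A' & 0\end{pmatrix}.
\]
The first factor has zero diagonal and entries in $F$. Padded by zeros, it lies in $p_1(\mathrm{M}_\infty(F))\subseteq p_1(\mathrm{M}_\infty(D))$ by Theorem~\ref{Vitas}.

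The second factor squares to zero. By Lemma~\ref{luy linh} it is $D$-similar to a direct sum of blocks $J_m(0)$, which is a trace-zero matrix over $F$ and so lies in $p_2(\mathrm{M}_\infty(F))$. Since $p_2$ has central coefficients, $p_2(\mathrm{M}_\infty(D))$ is closed under conjugation, so it contains the second factor. Disjointness of the variables then gives $A\in p(\mathrm{M}_\infty(D))$.

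\textbf{Comparison with the paper.} The paper reaches the same kind of reduction by a longer path:
\begin{itemize}
\item it splits $A'$, up to similarity, into invertible and nilpotent parts (Lemma~\ref{k kha nghich});
\item it writes the invertible part as $LHU$ (Lemma~\ref{LHU});
\item it rescales by a diagonal matrix with entries in $F(h)$ so that both factors have trace zero;
\item it diagonalizes via Lemma~\ref{BiSo} and applies Theorem~\ref{Vitas} over the subfield $F(h)$.
\end{itemize}
Neither route needs any statement about commutators over $D$, only similarity to trace-zero matrices over a commutative subfield. The block factorization above also avoids the case analysis and the distinctness hypotheses of Lemma~\ref{BiSo}.
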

				
				To establish Theorem \ref{Vitas2}, we first require the following auxiliary result.
				
				\begin{lemma}\label{BiSo}
					Let \( D \) be a division ring and let \( n >1  \) be a positive integer. Suppose that \( U \in \mathrm{UT}_n(D) \cup \mathrm{LT}_n(D) \) and \( H = \mathrm{diag}(a_1, \ldots, a_n) \in \mathrm{M}_n(D) \), where \( a_1, \ldots, a_{n-1} \in Z(D) \) and the elements \( a_1, \ldots, a_n \) are pairwise distinct. Then there exist \( P, Q \in \mathrm{GL}_n(D) \) such that
					$
					P^{-1} H U P = H = Q^{-1} U H Q.
					$
				\end{lemma}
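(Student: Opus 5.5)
The plan is to prove a slightly more general statement by induction on $n$. The claim is: \emph{every upper or lower triangular matrix $T \in \mathrm{M}_n(D)$ with diagonal $(a_1,\ldots,a_n)$, where $a_1,\ldots,a_{n-1}\in Z(D)$ and the $a_i$ are pairwise distinct, is similar to $H=\mathrm{diag}(a_1,\ldots,a_n)$.}

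This suffices for the lemma. If $U\in\mathrm{UT}_n(D)\cup\mathrm{LT}_n(D)$, then both $HU$ and $UH$ are triangular of the same type as $U$. Their diagonal entries are $a_i\cdot 1=1\cdot a_i=a_i$. So the claim gives $P,Q\in\mathrm{GL}_n(D)$ with $P^{-1}HUP=H=Q^{-1}UHQ$.

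The induction peels off the \emph{central} diagonal entries one at a time from the top-left corner. This avoids Lemma~\ref{cheo khoi}, whose hypotheses are awkward when the noncentral entry $a_n$ sits on the "wrong" side. The base case $n=1$ is trivial, since then $T=(a_1)=H$.

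For $n\ge2$ in the upper triangular case, write
\[
T=\begin{pmatrix} a_1 & r\\ 0 & T'\end{pmatrix},
\]
where $T'\in\mathrm{M}_{n-1}(D)$ is upper triangular with diagonal $(a_2,\ldots,a_n)$ and $r\in\mathrm{M}_{1\times(n-1)}(D)$. Conjugating by $\begin{pmatrix}1& y\\0&\mathrm{I}_{n-1}\end{pmatrix}$ kills $r$ exactly when $a_1y-yT'=r$ (up to sign conventions). Because $a_1$ is central, this equation reads $y(a_1\mathrm{I}_{n-1}-T')=r$. The matrix $a_1\mathrm{I}_{n-1}-T'$ is triangular with diagonal entries $a_1-a_i\neq0$ for $i\ge2$, by distinctness. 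A triangular matrix over a division ring with nonzero diagonal is invertible, by back substitution. Hence $y=r(a_1\mathrm{I}_{n-1}-T')^{-1}$ works, and $T$ is similar to $(a_1)\oplus T'$.

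Now $T'$ satisfies the hypotheses in size $n-1$: its first $n-2$ diagonal entries $a_2,\ldots,a_{n-1}$ are central, and all its entries are distinct. So by induction $T'$ is similar to $\mathrm{diag}(a_2,\ldots,a_n)$. Conjugating by $1\oplus P'$ then finishes this case.

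The lower triangular case is symmetric. Write $T=\begin{pmatrix} a_1&0\\ c&T'\end{pmatrix}$ and conjugate by $\begin{pmatrix}1&0\\x&\mathrm{I}_{n-1}\end{pmatrix}$. This leads to the equation $(a_1\mathrm{I}_{n-1}-T')x=\pm c$, which is solved by the same invertibility argument.

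The only point needing care is bookkeeping: the exact sign in the conjugation identity, and checking that $a_1$ commutes past $y$ or $x$. This is precisely where centrality of $a_1,\ldots,a_{n-1}$ is used. Note that $a_n$ is never peeled off, so its noncentrality causes no trouble.
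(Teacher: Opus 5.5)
Your proposal is correct and is essentially the paper's argument: induction on $n$, splitting off the central entry $a_1$ by a unipotent block conjugation whose off-diagonal block solves $y(a_1\mathrm{I}_{n-1}-T')=r$. The paper reorders this slightly, first diagonalizing the lower block by the induction hypothesis and then solving against the diagonal matrix. Your formulation for arbitrary triangular matrices also makes explicit the lower-triangular and $UH$ cases, which the paper only covers with ``it suffices''.
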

				
				\begin{proof}
					It suffices to consider the case \( A = H U \) with \( U \in \mathrm{UT}_n(D) \), and to argue by induction on \( n \). For \( n = 2 \), let
					$
					A =
					\begin{pmatrix}
						a_1 & y \\
						0 & a_2
					\end{pmatrix}.
					$
					Setting \( x = y(a_1 - a_2)^{-1} \), we obtain
					\[
					\begin{pmatrix}
						1 & x \\
						0 & 1
					\end{pmatrix}
					A
					\begin{pmatrix}
						1 & -x \\
						0 & 1
					\end{pmatrix}
					=
					\begin{pmatrix}
						a_1 & 0 \\
						0 & a_2
					\end{pmatrix}.
					\]
					
					Now assume \( n > 2 \) and that the statement holds for matrices of smaller size. We write
					$
					A =
					\begin{pmatrix}
						a_1 & \alpha \\
						0 & A'
					\end{pmatrix},
					$
					where \( A' \in \mathrm{M}_{n-1}(D) \) is upper triangular with diagonal entries \( a_2, \ldots, a_n \), and \( \alpha \) is a row vector.	Let \( A'' = \mathrm{diag}(a_2, \ldots, a_n) \). By the induction hypothesis, there exists \( P \in \mathrm{GL}_{n-1}(D) \) such that \( P^{-1} A' P = A'' \). Note that \( a_1 I_{n-1} - A'' \in \mathrm{GL}_{n-1}(D) \). 	Set \( x' = -\alpha P (a_1 I_{n-1} - A'')^{-1} \) and define
					\[
					P' =
					\begin{pmatrix}
						1 & 0 \\
						0 & P
					\end{pmatrix}
					\begin{pmatrix}
						1 & x' \\
						0 & I_{n-1}
					\end{pmatrix}.
					\]
					A direct computation shows that
					\[
					P'^{-1} =
					\begin{pmatrix}
						1 & -x' P^{-1} \\
						0 & P^{-1}
					\end{pmatrix}
					\quad \text{and} \quad
					P'^{-1} A P' =
					\begin{pmatrix}
						a_1 & 0 \\
						0 & A''
					\end{pmatrix}.
					\]
					This completes the proof.
				\end{proof}
				
				The next result concerns the similarity structure of nilpotent matrices.
				
				\begin{lemma} {\rm\cite[Lemma 3.2]{Pa_AbLe_21}}\label{luy linh}
					Let \( D \) be a division ring and let \( n\geq1 \) be an integer. If \( N \in \mathrm{M}_n(D) \) is a nilpotent matrix, then there exists an invertible matrix \( P \in \mathrm{GL}_n(D) \) such that
					\[
					P^{-1} N P = \bigoplus_{i=1}^s J_{m_i}(0),
					\]
					where \( s, m_1, \ldots, m_s \) are positive integers satisfying \( m_1 + \cdots + m_s = n \). Here,
					\[
					J_{m_i}(0) = \begin{pmatrix}
						0 & 1 & 0 & \cdots & 0 & 0 & 0 \\
						0 & 0 & 1 & \cdots & 0 & 0 & 0 \\
						0 & 0 & 0 & \cdots & 0 & 0 & 0 \\
						\vdots & \vdots & \vdots & \ddots & \vdots & \vdots & \vdots \\
						0 & 0 & 0 & \cdots & 0 & 1 & 0 \\
						0 & 0 & 0 & \cdots & 0 & 0 & 1 \\
						0 & 0 & 0 & \cdots & 0 & 0 & 0
					\end{pmatrix} \in \mathrm{M}_{m_i}(D).
					\]
				\end{lemma}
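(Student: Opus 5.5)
The statement to be proved is Lemma~\ref{luy linh}: every nilpotent $N\in\mathrm{M}_n(D)$ is similar to a direct sum of Jordan blocks $J_{m_i}(0)$. I would view $V=\mathrm{M}_{n\times 1}(D)$ as a right $D$-vector space of dimension $n$, with $N$ acting as a right $D$-linear endomorphism $T(v)=Nv$. The proof is then the standard Jordan normal form argument for nilpotent operators. That argument uses only linear algebra over a skew field: kernels, images, bases, and extension of independent sets to bases. All of these work for right vector spaces over $D$, because $T$ commutes with right scalar multiplication. A change of basis with basis matrix $P$ gives $P^{-1}NP$, so it suffices to find a basis of $V$ consisting of chains $v, Tv, \ldots, T^{m-1}v$ with $T^m v=0$.

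I would argue by induction on $n$; the case $n=1$ forces $N=0=J_1(0)$. Let $k$ be the nilpotency index. Consider the filtration $0=\ker T^0\subsetneq \ker T\subsetneq\cdots\subsetneq\ker T^k=V$ of right $D$-subspaces. I would build the chains from the top down. Choose a complement $W_k$ of $\ker T^{k-1}$ in $V$ and a basis of it. At the next level, note that $T(W_k)\subseteq\ker T^{k-1}$ meets $\ker T^{k-2}$ trivially. Hence $T(W_k)$ together with $\ker T^{k-2}$ spans a direct sum, and I choose $W_{k-1}$ so that $\ker T^{k-1}=\ker T^{k-2}\oplus T(W_k)\oplus W_{k-1}$. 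Continuing in this way yields the standard graded decomposition. The key fact used at each step is that $T$ is injective on any subspace meeting $\ker T^{j-1}$ trivially and sends it into $\ker T^{j-1}$ meeting $\ker T^{j-2}$ trivially. This holds verbatim over $D$.

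Collecting the chosen basis vectors $w$ of each $W_j$ together with their images $T^i w$ for $0\le i<j$ gives a basis of $V$. Ordering each chain as $T^{j-1}w,\ldots,Tw,w$ makes the matrix of $T$ on that chain equal to $J_j(0)$, with $1$'s on the superdiagonal. Taking $P$ to be the matrix whose columns are these basis vectors gives $P^{-1}NP=\bigoplus J_{m_i}(0)$. The entries are $0$ and $1$, which are central, so no left/right ambiguity arises in forming the block matrix.

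I expect the main obstacle to be the bookkeeping that verifies linear independence and spanning of the chain vectors. One checks that $V=\bigoplus_j\bigl(\ker T^{j}\ominus \ker T^{j-1}\bigr)$ dimensionwise, using the rank–nullity theorem for right $D$-linear maps. I would also check carefully that every construction uses right scalars only, so that noncommutativity of $D$ never intervenes.
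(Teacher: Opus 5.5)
The paper gives no proof of this lemma. It quotes it from \cite[Lemma 3.2]{Pa_AbLe_21} and uses it as a black box in the proof of Theorem~\ref{Vitas2}. Your proposal therefore supplies what the paper omits, and it is correct.

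The two observations that make the classical argument transfer to $D$ are exactly the ones you isolate:
\begin{enumerate}[\rm (i)]
\item The map $v\mapsto Nv$ commutes with \emph{right} scalar multiplication on $\mathrm{M}_{n\times 1}(D)$. Hence the kernels of powers of $T$ are right subspaces, and complements, bases and rank--nullity are all available.
\item The target matrix $\bigoplus J_{m_i}(0)$ has entries in $\{0,1\}\subseteq Z(D)$. So the column relations $Np_i=\sum_l p_l J_{li}$ assemble into $NP=PJ$, and $P$ is invertible because its columns form a right basis.
\end{enumerate}

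Two presentation points:
\begin{enumerate}[\rm (i)]
\item The announced induction on $n$ is never used; your top-down construction is direct.
\item After the first step, you must apply $T$ to the whole complement $U_{j}=T(U_{j+1})\oplus W_{j}$ of $\ker T^{j-1}$ in $\ker T^{j}$, not just to $W_j$, to build the next level. Injectivity of $T$ on $U_j$ is what makes $T$ send the chosen basis of $U_j$ to a basis of $T(U_j)$. That is exactly what yields independence of all the chain vectors.
\end{enumerate}
Your ``key fact'' also needs $j\ge 2$. If the nilpotency index is $1$, then $N=0$ and there is nothing to do.

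Compared with citing the literature, your route costs half a page, but it makes explicit that the noncommutativity of $D$ plays no role in this lemma.
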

				
				In other words, every nilpotent matrix over a division ring is similar to a direct sum of Jordan blocks corresponding to the eigenvalue zero, providing a canonical form that will be particularly useful in what follows.
				
				For non-invertible matrices, we rely on the following structural result.
				
				\begin{lemma} {\rm\cite[Theorem 15, Page 28]{Bo_Ja_43}}\label{k kha nghich}
					Let \( D \) be a division ring and let \( n > 1 \) be an integer. Suppose that \( A \in \mathrm{M}_n(D) \setminus \mathrm{GL}_n(D) \) is not nilpotent. Then there exist an invertible matrix \( P \in \mathrm{GL}_n(D) \) and a positive integer \( k < n \) such that
					\[
					P^{-1} A P = G \oplus N,
					\]
					where \( G \in \mathrm{GL}_{n-k}(D) \) and \( N \in \mathrm{M}_k(D) \) is a nilpotent matrix.
				\end{lemma}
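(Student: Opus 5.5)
The plan is the standard Fitting-decomposition argument applied to the linear map $x\mapsto Ax$ on the right $D$-vector space $V=\mathrm{M}_{n\times 1}(D)$. This map is right $D$-linear, since $A(vd)=(Av)d$. Consider the descending chain of right subspaces $V\supseteq AV\supseteq A^2V\supseteq\cdots$ and the ascending chain $\ker A\subseteq \ker A^2\subseteq\cdots$. Both stabilize because $\dim_D V=n<\infty$. Choose $r$ with $A^rV=A^{r+1}V=\cdots$ and $\ker A^r=\ker A^{r+1}=\cdots$; taking $r=n$ is enough.

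Set $W=A^rV$ and $K=\ker A^r$. Both are right $D$-subspaces, and both are invariant under $A$.

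\begin{itemize}
\item We show $W\cap K=0$. If $w=A^rv\in K$, then $A^{2r}v=0$, so $v\in\ker A^{2r}=\ker A^r$, and hence $w=0$.
\item We show $W+K=V$. For $v\in V$, we have $A^rv\in A^rV=A^{2r}V$, so $A^rv=A^{2r}u$ for some $u$. Then $v-A^ru\in K$.
\end{itemize}

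So $V=W\oplus K$. Rank–nullity for right $D$-spaces makes the dimension count consistent.

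On $W$, the map $A$ is surjective, since $AW=A^{r+1}V=W$. Finite-dimensionality then makes it injective, so $A|_W$ is invertible. On $K$, the map $A$ is nilpotent, because $A^r|_K=0$.

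Now choose a basis $w_1,\dots,w_{n-k}$ of $W$ and a basis $v_1,\dots,v_k$ of $K$. Let $P$ be the matrix with these columns; it is invertible because the columns form a basis of $V$. Since $AP=P(G\oplus N)$, the coordinate matrices $G$ of $A|_W$ and $N$ of $A|_K$ give $P^{-1}AP=G\oplus N$. Here $G$ is invertible and $N$ is nilpotent. One caveat: with column vectors and scalars acting on the right, the coordinates must be written with coefficients on the right, so that $Aw_j=\sum_i w_i g_{ij}$. I would take care with this bookkeeping.

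It remains to check $0<k<n$.

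\begin{itemize}
\item Since $A$ is singular, $\ker A\neq 0$, so $K\ne 0$ and $k\ge1$.
\item If $k=n$, then $K=V$, so $A^r=0$ and $A$ is nilpotent, which is excluded. Hence $W\ne0$ and $k<n$.
\end{itemize}

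The main obstacle is only ensuring the linear-algebra facts hold over a noncommutative division ring: that dimension is well defined for right $D$-spaces, and that an injective endomorphism of a finite-dimensional space is surjective and conversely. Both are standard. Working with right modules, as in the remark after Lemma~\ref{lemma in}, avoids any issue.
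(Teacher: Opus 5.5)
Your proposal is correct. It is the standard Fitting decomposition $V=A^{r}V\oplus\ker A^{r}$ for the right-linear map $x\mapsto Ax$ on column vectors, which is exactly the content of the result from Jacobson that the paper cites in place of a proof. Your right-scalar bookkeeping and your verification that $0<k<n$ are both sound.
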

				
				This result shows that any non-invertible, non-nilpotent matrix can be decomposed-up to similarity-into an invertible part and a nilpotent part, a structure that will play a key role in our subsequent arguments.
				
				With Proposition \ref{BiSo} in hand, we are ready to prove Theorem \ref{Vitas2}.
				
				\begin{proof}[Proof of Theorem {\rm \ref{Vitas2}}]
					Let \( A \in \mathrm{M}_\infty(D) \) be arbitrary. Then there exists an integer \( n > 1 \) and a matrix \( A' \in \mathrm{M}_n(D) \) such that
					$
					A =
					\begin{pmatrix}
						A' & 0 \\
						0 & 0
					\end{pmatrix}.
					$
					We may assume that \( A' \notin \mathrm{GL}_n(D) \) and that \( n > 2 \). Throughout the proof, we will use the fact that the image is invariant under similarity. First,	if \( A' \) is nilpotent, then the conclusion follows from Lemma \ref{luy linh} and Theorem \ref{Vitas}. Otherwise, assume now that \( A' \) is not nilpotent. By Lemma \ref{k kha nghich}, there exist \( P \in \mathrm{GL}_n(D) \) and a positive integer \( k < n \) such that
					\[
					P^{-1} A' P =
					\begin{pmatrix}
						A_1 & 0 \\
						0 & A_2
					\end{pmatrix},
					\]
					where \( A_1 \in \mathrm{GL}_{n-k}(D) \) and \( A_2 \in \mathrm{M}_k(D) \) is nilpotent.
					
					In what follows,	if \( A_1 \in \{ \lambda \mathrm{I}_{n-k} \mid \lambda \in F \} \), then the conclusion again follows from Lemma~\ref{luy linh} and Theorem \ref{Vitas}. Otherwise, taking Lemma \ref{LHU} into account, there exist matrices \( Q_1 \in \mathrm{GL}_{n-k}(D) \), \( U \in \mathrm{LT}_{n-k}(D) \), \( V \in \mathrm{UT}_{n-k}(D) \), and an element \( h \in D\setminus\{0\} \) such that
					\[
					Q_1^{-1} A_1 Q_1 = U H V,
					\]
					where \( H = \mathrm{diag}(1, \ldots, 1, h) \in \mathrm{GL}_{n-k}(D) \). On the other hand, by Lemma \ref{luy linh}, there exists \( Q_2 \in \mathrm{GL}_k(D) \) such that \( Q_2^{-1} A_2 Q_2 \) is a strictly upper triangular matrix with diagonal entries equal to \( 0 \) and all other entries in \( \{0,1\} \).	Set
					$
					Q = P
					\begin{pmatrix}
						Q_1 & 0 \\
						0 & Q_2
					\end{pmatrix}.
					$
					Then
					\[
					Q^{-1} A' Q =
					\begin{pmatrix}
						U H & 0 \\
						0 & I_k
					\end{pmatrix}
					\begin{pmatrix}
						V & 0 \\
						0 & A_2'
					\end{pmatrix}.
					\]	Since \( F \) is infinite, we can choose \( b_1, \ldots, b_{n-k} \in F \setminus \{0\} \) and \( b_{n-k+1}, \ldots, b_n \in F(h) \setminus \{0\} \) such that
					\[
					b_1 + \cdots + b_{n-k} = 0
					\]
					and
					\[
					b_1^{-1} + \cdots + b_{n-k-1}^{-1} + h b_{n-k}^{-1} + b_{n-k+1}^{-1} + \cdots + b_n^{-1} = 0.
					\] Here, $F(h)$ denotes the field extension of $F$ by adjoining $h$. We note that this extension remains naturally embedded in $D$. Let \( B = \mathrm{diag}(b_1, \ldots, b_n) \). Then both
					$
					\begin{pmatrix}
						U H & 0 \\
						0 & I_k
					\end{pmatrix}
					B^{-1}
					$ and
					$B
					\begin{pmatrix}
						V & 0 \\
						0 & A_2'
					\end{pmatrix}
					$
					have trace zero. Hence,
					\[
					A' =
					Q
					\begin{pmatrix}
						U H & 0 \\
						0 & I_k
					\end{pmatrix}
					B^{-1}
					Q^{-1}
					\cdot
					Q B
					\begin{pmatrix}
						V & 0 \\
						0 & A_2'
					\end{pmatrix}
					Q^{-1}.
					\] By Proposition \ref{BiSo}, the matrices $	Q
					\begin{pmatrix}
						U H & 0 \\
						0 & I_k
					\end{pmatrix}
					B^{-1}
					Q^{-1}$ and $Q B
					\begin{pmatrix}
						V & 0 \\
						0 & A_2'
					\end{pmatrix}
					Q^{-1}$ are similar to diagonal matrices \( B_2 \) and \( B_1 \), respectively. Moreover, since \( p_i(\mathrm{M}_\infty(F)) \neq \{0\} \), it follows that \( p_i(\mathrm{M}_\infty(F(h))) \neq \{0\} \) for \( i = 1,2 \). By Theorem \ref{Vitas},
					\[
					\begin{pmatrix}
						B_i & 0 \\
						0 & 0
					\end{pmatrix}
					\in p_i(\mathrm{M}_\infty(F)) \subseteq p_i(\mathrm{M}_\infty(D)),
					\] so  the sets \( p_1(\mathrm{M}_\infty(D)) \) and \( p_2(\mathrm{M}_\infty(D)) \) contain the corresponding conjugates. Therefore, it implies that $A \in p(\mathrm{M}_\infty(D)),$
					as required.
				\end{proof}

				Continuing along the same line as Theorem \ref{Vitas2}, we arrive at the following result.
				
				\begin{theorem}\label{Waring1}
					Let \( D \) be a division ring with center \( F \), and let \( p \in F\langle \mathcal{X} \rangle \) be a polynomial. Suppose that \( p = p_1 p_2 \), where \( p_1, p_2 \in F\langle \mathcal{X} \rangle \) are multilinear polynomials in disjoint sets of variables such that
					$p_1(\mathrm{M}_2(F)),\; p_2(\mathrm{M}_2(F)) \notin \left\{ \{0\},\; Z(\mathrm{M}_2(F)) \right\}.$
					Then $\mathrm{M}_2(D) = p(\mathrm{M}_2(D)).$
				\end{theorem}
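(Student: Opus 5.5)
The plan is to show that every $A\in\mathrm{M}_2(D)$ factors as $A=BC$ with $B$ and $C$ each similar to a trace-zero \emph{diagonal} matrix over a suitable subfield, and then to cover such diagonal matrices by the images of $p_1$ and $p_2$.

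First I would identify the images of the factors over $F$. Since $p_i$ is multilinear and $p_i(\mathrm{M}_2(F))$ is neither $\{0\}$ nor $Z(\mathrm{M}_2(F))$, the Kanel-Belov--Malev--Rowen classification for $2\times 2$ matrices (over $F$ quadratically closed enough, or its refinements) shows that $p_i(\mathrm{M}_2(F))$ contains $\mathrm{sl}_2(F)$. Here I plan to rely on the fact that a multilinear non-PI, non-central polynomial on $\mathrm{M}_2$ has image $\mathrm{sl}_2$ or $\mathrm{M}_2$. To be safe with small or non-closed fields, I would only use the weaker statement that $p_i(\mathrm{M}_2(K))$ contains every matrix $\mathrm{diag}(c,-c)$ and every nonzero nilpotent, for every field $K\supseteq F$ inside $D$. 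These matrices are similar to $e_{11}-e_{22}$ (when $2c\neq0$) or to $e_{12}$. This can be argued directly: the image of a multilinear polynomial is closed under conjugation and under scalar multiplication, and it contains a nonzero trace-zero noncentral element. Passing from $F$ to a subfield $K=F(h)$ preserves this, since multilinear identities and central polynomials extend under scalar extension.

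Next I would factor an arbitrary $A\in\mathrm{M}_2(D)$. If $A$ is singular, Theorem~\ref{recent}(iv) gives $A$ as a product of two commutators. More concretely, $A$ is similar to $\begin{pmatrix}0&x\\0&y\end{pmatrix}$ or to a nilpotent Jordan block, and I would write it as $e_{12}\cdot C$ or $C\cdot e_{12}$ with $C$ of trace zero that is nilpotent or diagonalizable. If $A$ is nonsingular and central, $A=\lambda\mathrm{I}_2=\mathrm{diag}(1,-1)\,\mathrm{diag}(\lambda,-\lambda)$ in characteristic $\neq2$, and in characteristic $2$ I would use products of nilpotent-type elements such as $(e_{12}+e_{21})^2=\mathrm{I}_2$, scaled by $\lambda$. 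If $A$ is nonsingular and noncentral, Lemma~\ref{LHU} with $h_1=1$ gives $A\sim LHU$ with $H=\mathrm{diag}(1,h)$. I would then insert $B=\mathrm{diag}(b_1,b_2)$ with $b_i\in F(h)$, exactly as in the proof of Theorem~\ref{Vitas2}, so that both $LHB^{-1}$ and $BU$ have trace zero. Working in the field $F(h)$ makes this a solvable pair of equations, except when $h\in F$ with $|F|$ small, which needs a separate check. Lemma~\ref{BiSo}, or a direct computation, then shows that each factor is similar to $\mathrm{diag}(c,-c)$ or is a nonzero nilpotent, and so lies in $p_i(\mathrm{M}_2(F(h)))\subseteq p_i(\mathrm{M}_2(D))$.

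I expect the main obstacle to be the delicate small cases: characteristic $2$, where $\mathrm{diag}(c,-c)$ is central, and fields with two or three elements. In these the trace-zero normalisation fails or forces the factors to be scalar. For these cases I would first try to replace diagonal factors by nilpotent or $e_{12}+e_{21}$-type factors, which are noncentral of trace zero in every characteristic, and to check by hand that every element of $\mathrm{M}_2(D)$ is a product of two such matrices up to similarity.
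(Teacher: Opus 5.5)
Your main case does not go through. For nonsingular noncentral $A\sim LHU$ with $H=\mathrm{diag}(1,h)$, the factors $LHB^{-1}$ and $BU$ have diagonals $(b_1^{-1},hb_2^{-1})$ and $(b_1,b_2)$. With $b_1,b_2\in F(h)$, trace zero forces $b_2=-b_1$ and then $(1-h)b_1^{-1}=0$, i.e.\ $h=1$. The device from the proof of Theorem~\ref{Vitas2} worked only because the finitary setting supplies extra diagonal slots (the padded singular/nilpotent block); in size $2$ there is no slack.

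More fundamentally, no factorization into matrices that are nilpotent or similar to $\mathrm{diag}(c,-c)$ can work in general. Take $D=F=\mathbb{Q}$ and $A=\mathrm{diag}(1,2)$. A product of two such matrices is either singular or has determinant $(cc')^2$, a square, while $\det A=2$. So non-split trace-zero factors such as $\begin{pmatrix}0&a\\1&0\end{pmatrix}$ are indispensable. You therefore need the full inclusion $\mathrm{sl}_2(K)\subseteq p_i(\mathrm{M}_2(K))$ for every subfield $K\supseteq F$ of $D$ (Malev's theorem over arbitrary fields, which is what the paper invokes), not your weakened version. Your ``direct'' argument for the weakened version is also insufficient. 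Conjugating and scaling a single noncentral trace-zero element only yields its own similarity class up to scalars, so it does not give both the nilpotents and the split diagonal matrices.

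The missing idea is an explicit factorization in which each factor has trace zero and lies, up to one conjugation, in $\mathrm{M}_2(K)$ for a commutative subfield $K=F(\alpha)\subseteq D$. This matters because trace is not a similarity invariant in $\mathrm{M}_2(D)$ for noncommutative $D$. The same issue affects the trace-zero factor $C$ in your singular case, whose entries need not commute.

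The paper writes $A=\begin{pmatrix}a&b\\c&d\end{pmatrix}$ as follows:
\begin{itemize}
\item if $b=c=0$, as $\begin{pmatrix}0&a\\1&0\end{pmatrix}\begin{pmatrix}0&d\\1&0\end{pmatrix}$;
\item if $b\neq0$, as a trace-zero matrix over $F(b^{-1}a-db^{-1})$ times a conjugate of a trace-zero matrix over $F(cb-db^{-1}ab)$;
\item if $c\neq0$, by analogous formulas.
\end{itemize}
It then applies Malev's theorem together with similarity invariance of the images. This treats singular, central and all other matrices uniformly, with the field-theoretic subtleties absorbed into the cited theorem. The small-field and characteristic-$2$ case analysis you anticipated as the main obstacle never arises.
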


				\begin{proof}
					Let
					$
					A =
					\begin{pmatrix}
						a & b \\
						c & d
					\end{pmatrix}
					\in \mathrm{M}_2(D),
					$
					where \( a, b, c, d \in D \). We consider several cases. 	If \( b = c = 0 \), then \( A \) is diagonal and can be written as
					\[
					A =
					\begin{pmatrix}
						a & 0 \\
						0 & d
					\end{pmatrix}
					=
					\underbrace{
						\begin{pmatrix}
							0 & a \\
							1 & 0
						\end{pmatrix}
					}_{\in \mathrm{M}_2(F(a))}
					\cdot
					\underbrace{
						\begin{pmatrix}
							0 & d \\
							1 & 0
						\end{pmatrix}
					}_{\in \mathrm{M}_2(F(d))}.
					\] We note that $F(a)$ is the field extension of $F$ generated by adjoining $a$. If \( b \neq 0 \), then \( A \) admits the decomposition
					\[
					A =
					\underbrace{
						\begin{pmatrix}
							-1 & 0 \\
							b^{-1}a - db^{-1} & 1
						\end{pmatrix}
					}_{\in \mathrm{M}_2(F(b^{-1}a - db^{-1}))}
					\begin{pmatrix}
						b^{-1} & 0 \\
						b^{-1}a & 1
					\end{pmatrix}^{-1}
					\underbrace{
						\begin{pmatrix}
							0 & -1 \\
							cb - db^{-1}ab & 0
						\end{pmatrix}
					}_{\in \mathrm{M}_2(F(cb - db^{-1}ab))}
					\begin{pmatrix}
						b^{-1} & 0 \\
						b^{-1}a & 1
					\end{pmatrix}.
					\]	If \( c \neq 0 \), then similarly we have
					\[
					A =
					\begin{pmatrix}
						1 & -ac^{-1} \\
						0 & c^{-1}
					\end{pmatrix}^{-1}
					\underbrace{
						\begin{pmatrix}
							0 & ac^{-1}dc - bc \\
							1 & 0
						\end{pmatrix}
					}_{\in \mathrm{M}_2(F(ac^{-1}dc - bc))}
					\begin{pmatrix}
						1 & -ac^{-1} \\
						0 & c^{-1}
					\end{pmatrix}
					\underbrace{
						\begin{pmatrix}
							1 & c^{-1}d - ac^{-1} \\
							0 & -1
						\end{pmatrix}
					}_{\in \mathrm{M}_2(F(c^{-1}d - ac^{-1}))}.
					\]
					
					In each case, the matrix \( A \) can be expressed as a product of matrices lying in matrix algebras over suitable field extensions of \( F \). By \cite[Theorem 1]{Pa_Ma_14}, together with the invariance of polynomial images under similarity, we obtain the desired conclusion.
				\end{proof}
				
				\section{Additive $p$-commutators}\label{sec po}
				
				In an algebra $R$ over a field $F$, additive commutators measure the extent to which multiplication in $R$ fails to be commutative, and they play an important role in many results concerning the images of noncommutative polynomials. More recently, a nonlinear viewpoint-referred to as \textit{polynomial commutators}-has been explored in \cite{Pa_Du_25-1}. From this perspective, one studies expressions of the form $p(ab) - p(ba)$, where $p \in F[x]$ is a polynomial in the variable $x$ with coefficients in a field $F$. When the algebra $R$ is non-unital, we implicitly assume that $p$ has no constant term so that both $p(ab)$ and $p(ba)$ remain well defined. This idea can be traced back to the work of T.~J.~Laffey and T.~T.~West \cite{Pa_LaWe_93}, who considered in the setting of matrix algebras over fields. In this section, we place the results obtained so far in the present paper within the broader framework of polynomial commutators.
				
				As a key technique in \cite{Pa_Du_25-1}, a straightforward substitution shows that if
				\[
				p(x) = \beta_0+\beta_1x +\cdots +\beta_ x^m,
				\]
				where $m \ge 1$ is an integer and $\beta_0, \beta_1,\ldots,\beta_m \in F$, then
				\[
				p[a,b] = \beta_1[a,b] +\beta_2((ab)^2 - (ba)^2) + \cdots +\beta_m((ab)^m - (ba)^m),
				\]
				which is called an {\it additive $p$-commutator}. We notice that every additive $p$-commutator is an additive commutator.
				Indeed, for $a, b\in R$, we have
				\begin{equation}
					p[a,b] =\sum_{k=1}^m\beta_k((ab)^k - (ba)^k)=\sum_{k=1}^m\big[\beta_ka, (ba)^{k-1}b\big]=\left[a, \sum_{k=1}^m\beta_k(ba)^{k-1}b\right],\label{add}
				\end{equation}
				as desired. For the case that $R=\text{M}_n(F)$, where $n>1$ and $F$ is a field of characteristic zero,
				Laffey and West proved that every additive commutator is an additive $p$-commutator (see \cite{Pa_LaWe_93}). Let
				$$
				p[R, R]:=\{p(ab)-p(ba)\mid a, b\in R\}.
				$$
				In this case, $p[R, R]=[R, R]^{[1]}$. In particular, $p[R, R]^+=[R, R]$.

First, we need to clarify the close relationship between polynomials $p(x)\in F[x]$ and $p(xy)-p(yx)\in F\langle x, y \rangle$.
The following is well-known. We refer the reader to the book \cite{Bei_96} for the notion of algebras satisfying a generalized polynomial identity (i.e., GPI).

\begin{lemma}\label{lem5}
Let $R$ be an algebra over a field $F$, and let $L$ be a field extension of $F$. If $F$ is infinite, then R and
$R\otimes_FL$ satisfy the same GPIs with coefficients in $R$.
\end{lemma}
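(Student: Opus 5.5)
The plan is to reduce the statement to the linear independence of monomials over $F$ and then lift along $L$, using that $F$ is infinite. Fix a GPI $\Phi(x_1,\ldots,x_k)$ with coefficients in $R$. Concretely, $\Phi$ is an element of the free product $R *_F F\langle \mathcal{X}\rangle$, i.e.\ a finite sum of generalized monomials $r_0 x_{i_1} r_1 x_{i_2}\cdots x_{i_s} r_s$. Regard $\Phi$ also as an element of $(R\otimes_F L)*_L L\langle \mathcal{X}\rangle$ via $r\mapsto r\otimes 1$.

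The implication ``$R\otimes_F L$ satisfies $\Phi$ $\Rightarrow$ $R$ satisfies $\Phi$'' is immediate. The map $R\to R\otimes_F L$, $r\mapsto r\otimes 1$, is an injective $F$-algebra homomorphism because $L$ is free over $F$. Evaluating $\Phi$ at $r_i\otimes 1$ gives $\Phi(r_1,\ldots,r_k)\otimes 1$, so vanishing in $R\otimes_F L$ forces $\Phi(r_1,\ldots,r_k)=0$.

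For the converse, assume $R$ satisfies $\Phi$. I would first linearize to the homogeneous pieces. Since $F$ is infinite, a Vandermonde argument applies: replace $x_j$ by $\lambda x_j$ for $\lambda\in F$ and use that $\Phi$ has finite degree in each variable. This shows that each multihomogeneous component $\Phi_{(d_1,\ldots,d_k)}$ (homogeneous of degree $d_j$ in $x_j$) is again a GPI of $R$. So we may assume $\Phi$ is multihomogeneous.

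Take elements $u_j=\sum_{t} a_{jt}\otimes \ell_t\in R\otimes_F L$, with $\ell_1,\ldots,\ell_N$ part of an $F$-basis of $L$ and finitely many $t$ overall. Introduce commuting indeterminates $\xi_{jt}$ and consider $\Phi\big(\sum_t a_{1t}\xi_{1t},\ldots,\sum_t a_{kt}\xi_{kt}\big)$ in $R\otimes_F F[\xi]$. Expanding this is a polynomial in the $\xi$'s whose coefficients are elements $c_\mu\in R$. Each $c_\mu$ is a partial linearization of $\Phi$ evaluated at the $a_{jt}$.

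The key step is to show every $c_\mu$ vanishes. The evaluation map $F[\xi]\ni g\mapsto$ the function $F^{M}\to R$ is injective on $R\otimes_F F[\xi]$ when $F$ is infinite, since a polynomial vanishing on all of $F^M$ is zero. By hypothesis, every specialization $\xi_{jt}\mapsto\lambda_{jt}\in F$ gives $\Phi(\sum_t\lambda_{1t}a_{1t},\ldots)=0$. Hence the polynomial vanishes identically, so all $c_\mu=0$. Now specialize $\xi_{jt}\mapsto \ell_t$ instead, via the $F$-algebra map $R\otimes_F F[\xi]\to R\otimes_F L$. This yields $\Phi(u_1,\ldots,u_k)=\sum_\mu c_\mu\otimes \ell^\mu=0$, which is exactly what we need. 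Here we use that $L$ is commutative, so the $\ell_t$ commute with each other and with $R\otimes 1$. This makes $\xi\mapsto \ell$ a well-defined algebra homomorphism compatible with the generalized monomials. This step makes the earlier multihomogeneous reduction unnecessary, so I may skip it.

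The main obstacle is bookkeeping. I need to make precise that substituting scalar-linear combinations into a generalized monomial $r_0x_{i_1}r_1\cdots$ yields a polynomial in the central commuting variables $\xi$ with coefficients in $R$. I also need that $R\otimes_F F[\xi]\cong R[\xi]$ embeds into functions $F^M\to R$. The latter holds by the standard argument: fix an $F$-basis of $R$, reduce to finitely many coordinates, and use the infinitude of $F$ one variable at a time.
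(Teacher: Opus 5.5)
The paper does not prove this lemma. It states it as well known and points to Beidar--Martindale--Mikhalev \cite{Bei_96}, so there is no competing argument to compare with. Your proof is correct, self-contained, and is the standard argument behind that citation.

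The easy direction needs only that $r\mapsto r\otimes 1$ is an injective $F$-algebra map.

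For the converse, consider the element $\Phi\bigl(\sum_t a_{1t}\xi_{1t},\ldots,\sum_t a_{kt}\xi_{kt}\bigr)\in R\otimes_F F[\xi]$.
\begin{itemize}
\item Every specialization $\xi\mapsto\lambda\in F^M$ kills it. These specializations are $F$-algebra maps to $R$ sending $r\otimes 1\mapsto r$, and $R$ satisfies $\Phi$.
\item Because $F$ is infinite, this forces the element to be $0$. Your coordinate argument in an $F$-basis of $R$ proves this.
\item The algebra map $\mathrm{id}_R\otimes\varepsilon$, with $\varepsilon\colon F[\xi]\to L$ given by $\xi_{jt}\mapsto \ell_t$, carries the element to $\Phi(u_1,\ldots,u_k)$. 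This map exists because $L$ is commutative.
\end{itemize}
As you observed, the multihomogeneous reduction is unnecessary, and the $\ell_t$ need not be part of a basis.

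Two formal points you could add for completeness; neither affects your argument:
\begin{itemize}
\item If ``satisfies a GPI'' means satisfying a nonzero identity, note that a nonzero $\Phi$ stays nonzero in $(R\otimes_F L)*_L L\langle\mathcal{X}\rangle\cong (R*_F F\langle\mathcal{X}\rangle)\otimes_F L$. This is what makes ``the same GPIs'' meaningful.
\item For nonunital $R$, allow generalized monomials to have empty coefficients, i.e., take coefficients in $\widetilde R$.
\end{itemize}
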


Let $R$ be a prime algebra over a field $F$. Then $F$ is contained in $C$, the extended centroid of $R$. For any $f(x_1,\ldots,x_n)\in F\langle\mathcal{X} \rangle$, $f(R)=0$ if and only if $f(RC)=0$ (see \cite[Theorem~2]{chuang1988}). As a Consequence, $f$ is central-valued on $R$ if and only if $f$ is central-valued on $RC$. These observations allow us to conduct our research in a more general context of prime rings, rather than in the context of prime algebras.

\begin{proposition}\label{pro4}
Let $R$ be a noncommutative prime ring with extended centroid $C$, and let $p \in C[x]$ be a nonconstant polynomial. If
$p(xy)-p(yx)\in C\langle x, y \rangle$ is central-valued on $RC$, then $p(x)$ is also central-valued on $RC$ except when $R\cong {\rm M}_2({\rm GF}(2))$,
					\[
	p(R)^+=\{0, 1, \begin{pmatrix}
						1 & 1 \\
						1 & 0
					\end{pmatrix}, \begin{pmatrix}
						0 & 1 \\
						1 & 1
					\end{pmatrix}\}\ \ \text{\rm and}\ \ p[R, R]^+=\{0, 1\}.
					\]
\end{proposition}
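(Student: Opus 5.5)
The plan is to reduce to matrix algebras over a field and then exploit the substitution $b=a^{-1}c$ for invertible $a$, which turns $p(ab)-p(ba)$ into $p(c)-a^{-1}p(c)a$.

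\emph{Reduction to matrices.} Assume $p(xy)-p(yx)$ is central-valued on $RC$. Then $[p(x_1x_2)-p(x_2x_1),x_3]$ is a polynomial identity of $RC$. The polynomial $p$ is nonconstant, and I expect (but still have to verify) that this identity is nontrivial. If so, Posner's theorem shows that $RC$ is a finite-dimensional central simple $C$-algebra.

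If $C$ is infinite, Lemma~\ref{lem5} lets me pass to $RC\otimes_C K\cong \mathrm{M}_n(K)$ with $K$ the algebraic closure of $C$. Both the hypothesis and the conclusion are polynomial identities with coefficients in $C$, so they transfer in both directions. Since $R$ is noncommutative, $n\ge 2$.

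If $C$ is finite, then $RC$ is a finite simple ring, so $RC=R\cong \mathrm{M}_n(\mathrm{GF}(q))$ with $n\ge2$ by Wedderburn's theorem. In both cases it suffices to prove the following: in $\mathrm{M}_n(K)$, $n\ge2$, if $p(xy)-p(yx)$ is central-valued then so is $p$, apart from the stated exception.

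\emph{Key substitution.} For invertible $a$ and arbitrary $c$, put $b=a^{-1}c$. Then $ab=c$ and $ba=a^{-1}ca$. Since $p$ is a polynomial in one variable, $p(a^{-1}ca)=a^{-1}p(c)a$. Writing $u=p(c)$, the hypothesis gives
$$u-a^{-1}ua=\lambda(a)\in K \quad\text{for every } a\in\mathrm{GL}_n(K),$$
or equivalently $[u,a]=\lambda(a)\,a$.

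Take $a=\mathrm I_n+e_{ij}$ with $i\ne j$. Then $[u,e_{ij}]=\lambda(\mathrm I_n+e_{ij})$.
\begin{itemize}
\item If $n\ge3$, the $(k,k)$-entry of $[u,e_{ij}]$ for $k\notin\{i,j\}$ is $0$, which forces $\lambda=0$.
\item If $n=2$, compare entries of $[u,e_{12}]=\lambda(\mathrm I_2+e_{12})$. Its $(1,1)$- and $(2,2)$-entries are $u_{21}$ and $-u_{21}$, so $u_{21}=\lambda=-\lambda$. When $\operatorname{char}K\neq2$ this gives $\lambda=0$.
\end{itemize}
Once $\lambda=0$, $u$ commutes with every elementary transvection. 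These span $\mathrm{M}_n(K)$ together with $\mathrm I_n$, so $u=p(c)$ is central. As $c$ was arbitrary, $p$ is central-valued.

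\emph{Characteristic two and the exception (main obstacle).} The remaining case is $n=2$ with $\operatorname{char}K=2$. Here I would also use $a=\mathrm I_2+e_{21}$ and diagonal invertible $a=\mathrm{diag}(1,t)$. With $t\ne 1$, the diagonal of $[u,a]$ vanishes while that of $\lambda a$ is $(\lambda,\lambda t)$, so $\lambda=0$. Such a $t$ exists whenever $|K|>2$, which covers every infinite $K$ and every finite field other than $\mathrm{GF}(2)$.

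So the only surviving case is $R\cong\mathrm{M}_2(\mathrm{GF}(2))$. There I would compute directly. Any $p\in\mathrm{GF}(2)[x]$ acts on the $16$ matrices through its residue modulo $x^4-x^2$ or a similar annihilating polynomial, so there are finitely many possibilities. I would list those $p$ for which $p[R,R]^+\subseteq\{0,1\}$ but $p$ is not central-valued, and check that for them $p(R)^+=\{0,1,\left(\begin{smallmatrix}1&1\\1&0\end{smallmatrix}\right),\left(\begin{smallmatrix}0&1\\1&1\end{smallmatrix}\right)\}\cong\mathrm{GF}(4)$ and $p[R,R]^+=\{0,1\}$. This finite verification, together with making sure the PI used in the reduction is genuinely nontrivial, is where I expect the real work to lie.
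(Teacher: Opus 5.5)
Your proposal is correct in its main line, and after the common first steps it follows a genuinely different route from the paper. The reduction to $\mathrm{M}_n(K)$ and your identity $[u,a]\in Ka$ (for $u=p(c)$ and $a$ a unit) are exactly the paper's opening moves; compare its Eq.~\eqref{eq:2}, $[p(\widehat R),u]\subseteq Ku$.

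\textbf{How the two routes differ.} The paper next applies Chuang's theorem (Theorem~\ref{thm4}) to $p$ on $\widehat R$. Either $[\widehat R,\widehat R]\subseteq p(\widehat R)^+$, and then a noncentral unit $u$ with $\widehat R=[\widehat R,\widehat R]+Ku$ gives $[\widehat R,u]=u^{-1}[\widehat R,u]\subseteq K$, a contradiction. Or $R\cong\mathrm{M}_2(\mathrm{GF}(2))$ with one of Chuang's two explicit groups $p(R)^+$, and the relation $[p(R),u^2]=0$ rules out one of them. You instead evaluate the identity at transvections and diagonal units and read off $\lambda=0$ entry by entry.
\begin{itemize}
\item Your argument is elementary and self-contained, and it shows transparently why only $n=2$, $|K|=2$ can be exceptional.
\item The paper's argument is shorter once Theorem~\ref{thm4} is available, and it inherits the shape of $p(R)^+$ in the exceptional case from Chuang's list. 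You must treat $\mathrm{M}_2(\mathrm{GF}(2))$ separately.
\end{itemize}

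\textbf{Small repairs.}
\begin{itemize}
\item The PI is nontrivial because the top-degree part $\beta_m\bigl((x_1x_2)^m-(x_2x_1)^m\bigr)$ of $p(x_1x_2)-p(x_2x_1)$ is nonzero, and bracketing with $x_3$ cannot cancel it.
\item When $C$ is finite, $RC=R$ holds because a finite prime ring is simple with identity.
\item Transvections together with $\mathrm{I}_n$ do not span $\mathrm{M}_n(K)$. They do generate it as an algebra ($e_{ii}=e_{ij}e_{ji}$), and that is all you need.
\item In characteristic $2$ with $n=2$, the diagonal test only gives that $u$ commutes with $e_{22}$, i.e.\ $u$ is diagonal. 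You must then return to $a=\mathrm{I}_2+e_{12}$. Now $[u,e_{12}]=(u_{11}-u_{22})e_{12}$ has zero diagonal, which forces $\lambda=0$ and $u_{11}=u_{22}$.
\end{itemize}

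\textbf{The real soft spot: $\mathrm{M}_2(\mathrm{GF}(2))$.} Reducing modulo $x^4-x^2$ does not work. Take $\omega=\bigl(\begin{smallmatrix}1&1\\1&0\end{smallmatrix}\bigr)$; then $\omega^4-\omega^2=\omega-\omega^2=1\neq0$, and $\omega$ is exactly where the exception lives. For example, $p=x^6+x^2=x^2(x+1)^4$ is divisible by $x^4-x^2$ over $\mathrm{GF}(2)$, yet $p(\omega)=\omega$, and this $p$ is a genuine exceptional case. The correct modulus is $x^2(x+1)^2(x^2+x+1)$.

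No enumeration is needed, though; your own identity settles the case.
\begin{itemize}
\item Over $\mathrm{GF}(2)$, $[u,a]=a$ means $a^{-1}ua=u+1$. Then $\det u=\det(u+1)=\det u+\operatorname{tr}u+1$, so $\operatorname{tr}u=1$.
\item Hence a value $u=p(c)$ of trace $0$ commutes with every unit. Since units span $R$ additively, such a value is central.
\item A value of trace $1$ is $\omega$, $\omega^2$, or one of the six mutually conjugate rank-one idempotents. The condition ``$[u,a]\in\{0,a\}$ for all units $a$'' is conjugation-invariant, and $[e_{11},\omega]=e_{12}+e_{21}\notin\{0,\omega\}$, so all six idempotents are excluded.
\item Therefore $p(R)\subseteq\{0,1,\omega,\omega^2\}$. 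As $\omega$ and $\omega^2$ are conjugate, non-centrality gives $p(R)^+=\{0,1,\omega,\omega^2\}$.
\item Finally, take $c$ with $p(c)=\omega$, $a=e_{12}+e_{21}$ and $b=a^{-1}c$. Then $p(ab)-p(ba)=\omega-\omega^2=1$, so $p[R,R]^+=\{0,1\}$.
\end{itemize}
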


\begin{proof}
Assume that
$p(xy)-p(yx)\in C\langle x, y \rangle$ is central-valued on $R$ but $p(x)$ is not central-valued on $R$.

Then $R$ is a prime PI-ring with $Z(R)\ne \{0\}$ (see \cite[Theorem 2]{rowen1973}).
In this case, its extended centroid $C$ is equal to the quotient field of $Z(R)$. Let $K$ be defined as $C$ if $C$ is a finite field and as the algebraic closure of $C$ if $C$ is an infinite field. Then $$\widehat R:=\text{\rm M}_n(K)\cong RC\otimes_CK,$$ where $\dim_CRC=n^2>1$ since $R$ is not commutative.
It follows from \cite[Theorem 6.4.1]{Bei_96} or \cite[Theorem 2]{chuang1988} that $R$ and $RC$ satisfy the same GPIswith coefficients in $RC$. Applying Lemma \ref{lem5}, we conclude that $p(xy)-p(yx)\in C\langle x, y \rangle$ is central-valued on $\widehat R$.

Let $u, w\in \widehat R$ with $u$ a unit. Then
\begin{equation}\label{eq:5}
p(w)-up(w)u^{-1}=p(w)-p(uwu^{-1})=p(u^{-1}(uw))-p((uw)u^{-1})\in K.
\end{equation}
That is,
$
[p(w), u]u^{-1}\in K.
$
Hence we have
\begin{equation}\label{eq:2}
[p(\widehat R), u]\subseteq Ku
\end{equation}
for any unit $u$ of $\widehat R$. Since $p(x)$ is not central-valued on $RC$, it is clear that $p(x)$ is not central-valued on $\widehat R$.
By Theorem \ref{thm4}, one of the following holds:

Case 1:\ $[\widehat R, \widehat R]\subseteq p(\widehat R)^+$. Choose a noncentral unit $u\in \widehat R$ such that $u\notin [\widehat R, \widehat R]$ (for instance, let  $u=\begin{pmatrix}
						1 & 1 \\
						1 & 0
					\end{pmatrix}$ if $n=2$ and $u=e_{11}+\sum_{j=2}^ne_{jk}$ where $k=n-j+2$ if $n>2$). Then
$\widehat R=[\widehat R, \widehat R]+Ku$. In view of Eq.\eqref{eq:2}, we have
$$
[\widehat R, u]=\big[[\widehat R, \widehat R]+Ku, u\big]=\big[[\widehat R, \widehat R], u\big]\subseteq Ku.
$$
Note that $u^{-1}\widehat R=\widehat R$. Thus
$$
[\widehat R, u]=[u^{-1}\widehat R, u]=u^{-1}[\widehat R, u]\subseteq K.
$$
This implies that $u\in K$, a contradiction.

Case 2:\ $R\cong {\rm M}_2({\rm GF}(2))$ and either
					\[
					p(R)^+=\{0, \begin{pmatrix}
						0 & 1 \\
						1 & 0
					\end{pmatrix}, \begin{pmatrix}
						1& 1 \\
						0 & 1
					\end{pmatrix},  \begin{pmatrix}
						1& 0 \\
						1 & 1
					\end{pmatrix}\} \text{ or }
					p(R)^+=\{0, 1, \begin{pmatrix}
						1 & 1 \\
						1 & 0
					\end{pmatrix}, \begin{pmatrix}
						0 & 1 \\
						1 & 1
					\end{pmatrix}\}.
					\]
In this case, $K=Z(R)=\text{\rm GF}(2)$ and $\widehat R=R$. In view of Eq.\eqref{eq:2}, we have
$$
[p(R), u^2]=\big[[p(R), u], u\big]=0
$$
for all units $u\in R$. If $p(R)^+=\{0, \begin{pmatrix}
						0 & 1 \\
						1 & 0
					\end{pmatrix}, \begin{pmatrix}
						1& 1 \\
						0 & 1
					\end{pmatrix},  \begin{pmatrix}
						1& 0 \\
						1 & 1
					\end{pmatrix}\}$, then
$$
[\begin{pmatrix}
						1& 1 \\
						0 & 1
					\end{pmatrix}, \begin{pmatrix}
						1& 1 \\
						1 & 0
					\end{pmatrix}^2]=\begin{pmatrix}
						1& 1 \\
						0 & 1
					\end{pmatrix}\ne 0,
$$
a contradiction. It follows that $p(R)^+=\{0, 1, \begin{pmatrix}
						1 & 1 \\
						1 & 0
					\end{pmatrix}, \begin{pmatrix}
						0 & 1 \\
						1 & 1
					\end{pmatrix}\}$. Clearly, $p[R, R]^+\subseteq p(R)^+\cap [R, R]\subseteq \{0, 1\}$.

We claim that $1\in p[R, R]^+$ and hence  $p[R, R]^+=\{0, 1\}.$
Otherwise, $p[R, R]^+=\{0\}$. That is, $p(xy)=p(yx)$ for all $x, y\in R$.
In view of Eq.\eqref{eq:5}, $[u, p(w)]=0$ for all $u, w\in R$ with $u$ a unit. Clearly, $R$ is additively generated by all units of $R$. This implies that $p(x)$ is cental-valued on $R$. This is a contradiction since the element $\begin{pmatrix}
						0& 1 \\
						1 & 0
					\end{pmatrix}\in p(R)^+$ is not central.
\end{proof}

\begin{corollary}\label{cor24}
Let $R$ be a prime ring with extended centroid $C$, and let $p \in C[x]$ be a nonconstant polynomial. If $|C|>2$, then
$p(xy)-p(yx)\in C\langle x, y \rangle$ is central-valued on $RC$ if and only if $p(x)$ is central-valued on $RC$.
\end{corollary}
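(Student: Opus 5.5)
Corollary~\ref{cor24} follows from Proposition~\ref{pro4} together with a direct argument for the converse; the two directions are handled separately.

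\emph{The direction ``$p(x)$ central-valued $\Rightarrow$ $p(xy)-p(yx)$ central-valued''.} Suppose $p(RC)\subseteq C$. For $a,b\in RC$ both $p(ab)$ and $p(ba)$ lie in $C$, so their difference lies in $C$. Hence $p(xy)-p(yx)$ is central-valued on $RC$. This needs no hypothesis on $|C|$. If $R$ is commutative, both polynomials are trivially central-valued, so from here on we assume $R$ is noncommutative.

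\emph{The direction ``$p(xy)-p(yx)$ central-valued $\Rightarrow$ $p(x)$ central-valued''.} Let $R$ be noncommutative and suppose $p(xy)-p(yx)$ is central-valued on $RC$. Proposition~\ref{pro4} gives that $p(x)$ is central-valued on $RC$, except when $R\cong \mathrm{M}_2(\mathrm{GF}(2))$. It remains to rule out this exception using $|C|>2$. If $R\cong \mathrm{M}_2(\mathrm{GF}(2))$, then $R$ is simple, so $RC=R$, and the extended centroid equals the center $Z(R)=\mathrm{GF}(2)$. Thus $|C|=2$, contradicting $|C|>2$. So the exceptional case cannot occur, and $p(x)$ is central-valued on $RC$.

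The only step needing care is the identification of the extended centroid of $\mathrm{M}_2(\mathrm{GF}(2))$ with $\mathrm{GF}(2)$. This holds because, for a simple unital ring, $Q_s(R)=R$ and $C=Z(R)$; this fact is already implicit in the proof of Proposition~\ref{pro4}, where it is noted that $K=Z(R)=\mathrm{GF}(2)$. Nothing else poses an obstacle.
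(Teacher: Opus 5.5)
Your proposal is correct and is the same argument the paper intends; the paper states the corollary without a separate proof, as a direct consequence of Proposition~\ref{pro4}. As you do, the exceptional case is excluded because the simple ring $\mathrm{M}_2(\mathrm{GF}(2))$ has extended centroid $Z(R)=\mathrm{GF}(2)$, and the converse direction and the commutative case are trivial.
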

				
				In light of Corollary~\ref{cor21}, we obtain a parallel phenomenon for polynomial commutators.
				
				\begin{theorem}\label{thm52}
Let $R$ be a simple ring with extended centroid $C$, and let $p \in C[x]$ be a nonconstant polynomial. If $p(x)$ is not central-valued on $R$, then one of the following holds:

(i)\ $p[R, R]^+=[R,R]$ except when
$$
R\cong {\rm M}_2({\rm GF}(2))\ \text{\rm and}\ p[R, R]^+=\{0, \begin{pmatrix}
						0 & 1 \\
						1 & 0
					\end{pmatrix}, \begin{pmatrix}
						1& 1 \\
						0 & 1
					\end{pmatrix},  \begin{pmatrix}
						1& 0 \\
						1 & 1
					\end{pmatrix}\}.
$$

(ii)\ $R\cong {\rm M}_2({\rm GF}(2))$,
$p(R)^+=\{0, 1, \begin{pmatrix}
						1 & 1 \\
						1 & 0
					\end{pmatrix}, \begin{pmatrix}
						0 & 1 \\
						1 & 1
					\end{pmatrix}\}$	
and $p[R, R]^+=\{0, 1\}$.
				\end{theorem}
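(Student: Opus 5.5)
Set $g(x,y):=p(xy)-p(yx)\in C\langle x,y\rangle$. The plan is to apply Theorem~\ref{thm4} to $g$ in place of $f$, with $I=R$, and then to use the identity \eqref{add} to get the reverse inclusion. Because $R$ is simple, $R=RC$, and the only nonzero ideal of $R$ is $R$ itself. So whenever Theorem~\ref{thm4} applies to $g$, it gives $[R,R]\subseteq g(R)^+=p[R,R]^+$. Equation~\eqref{add} shows that every element of $p[R,R]$ is an additive commutator, so $p[R,R]^+\subseteq[R,R]$. Together these give $p[R,R]^+=[R,R]$, which is conclusion (i).

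For this we first need $g$ not to be central-valued on $RC=R$. Note that $R$ is noncommutative, since otherwise $p(x)$ would be central-valued. Suppose $g$ is central-valued. Proposition~\ref{pro4} then says that $p$ is central-valued, which contradicts the hypothesis, unless we are in its exceptional case: $R\cong\mathrm{M}_2(\mathrm{GF}(2))$, $p(R)^+=\{0,1,\left(\begin{smallmatrix}1&1\\1&0\end{smallmatrix}\right),\left(\begin{smallmatrix}0&1\\1&1\end{smallmatrix}\right)\}$ and $p[R,R]^+=\{0,1\}$. That exceptional case is precisely alternative (ii). From now on we may therefore assume that $g$ is not central-valued on $R$.

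Next we apply Theorem~\ref{thm4} to $g$. Either we obtain a nonzero ideal $M$ with $[M,R]\subseteq g(R)^+$, hence $M=R$ and (i) holds, or $R\cong\mathrm{M}_2(\mathrm{GF}(2))$ and $g(R)^+$ is one of the two listed four-element groups. In the second situation we use $g(R)^+\subseteq[R,R]$ from \eqref{add}. Over $\mathrm{GF}(2)$, $[R,R]$ is the set of trace-zero matrices, a group of order $8$. It contains $1$ but not $\left(\begin{smallmatrix}1&1\\1&0\end{smallmatrix}\right)$, whose trace is $1$. So the group $\{0,1,\left(\begin{smallmatrix}1&1\\1&0\end{smallmatrix}\right),\left(\begin{smallmatrix}0&1\\1&1\end{smallmatrix}\right)\}$ cannot occur. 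Hence $g(R)^+$ must be the group $\{0,\left(\begin{smallmatrix}0&1\\1&0\end{smallmatrix}\right),\left(\begin{smallmatrix}1&1\\0&1\end{smallmatrix}\right),\left(\begin{smallmatrix}1&0\\1&1\end{smallmatrix}\right)\}$, all of whose elements have trace zero. This is exactly the exception stated in (i).

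The main obstacle is bookkeeping rather than substance. We must check that the two-variable polynomial $g$ really lies in $C\langle\mathcal X\rangle$ and has zero constant term, so that Theorem~\ref{thm4} applies verbatim. We must also make sure that the case split from Proposition~\ref{pro4} and the one from Theorem~\ref{thm4} fit together into exactly the two alternatives stated, in particular that the exceptional case in (i) is listed as an exception and not claimed to equal $[R,R]$. The trace computation over $\mathrm{GF}(2)$ that rules out the second four-element group is the only genuine verification required.
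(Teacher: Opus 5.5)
Your proposal is correct and follows the paper's proof essentially step for step. You split on whether $p(xy)-p(yx)$ is central-valued on $R$ and use Proposition~\ref{pro4} to land in alternative (ii) in the central-valued case; otherwise you combine Theorem~\ref{thm4} (with $M=R$ by simplicity) with the inclusion $p[R,R]^+\subseteq[R,R]$ from Eq.~\eqref{add}, ruling out the second exceptional group by a trace comparison, and your remarks that $R$ is noncommutative and that $[R,R]$ is the order-$8$ group of trace-zero matrices in $\mathrm{M}_2(\mathrm{GF}(2))$ only make explicit what the paper leaves implicit.
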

				
				\begin{proof}
Let $f(x,y):=p(xy)-p(yx)\in F\langle x, y\rangle$.

Case 1:\ $p(xy)-p(yx)\in F\langle x, y \rangle$ is not central-valued on $R$. Note from Eq.\eqref{add} that every additive $p$-commutator is an additive commutator. Hence, it follows that $$f(R)^+=p[R, R]^+\subseteq [R,R].$$ On the other hand, since $f(x,y)=p(xy)-p(yx)$ is not central-valued on $R$, Theorem~\ref{thm4} implies that $$[R,R]\subseteq f(R)^+=p[R, R]^+$$ and hence
					$p[R, R]^+=[R,R]$
					except when $R\cong {\rm M}_2({\rm GF}(2))$ and either
$$
p[R, R]^+=\{0, \begin{pmatrix}
						0 & 1 \\
						1 & 0
					\end{pmatrix}, \begin{pmatrix}
						1& 1 \\
						0 & 1
					\end{pmatrix},  \begin{pmatrix}
						1& 0 \\
						1 & 1
					\end{pmatrix}\}\ \text{and}\
p[R, R]^+=\{0, 1, \begin{pmatrix}
						1 & 1 \\
						1 & 0
					\end{pmatrix}, \begin{pmatrix}
						0 & 1 \\
						1 & 1
					\end{pmatrix}\}.
$$
A direct inspection shows that the latter set is not contained in $[R,R]$, for instance by comparing the traces of its elements. Hence this possibility cannot occur, and we are left with the first exceptional case.

Case 2:\ $p(xy)-p(yx)\in F\langle x, y \rangle$ is central-valued on $R$. It follows from Proposition \ref{pro4} that $R\cong {\rm M}_2({\rm GF}(2))$,
$p(R)^+=\{0, 1, \begin{pmatrix}
						1 & 1 \\
						1 & 0
					\end{pmatrix}, \begin{pmatrix}
						0 & 1 \\
						1 & 1
					\end{pmatrix}\}$ and $p[R, R]^+=\{0, 1\}$.
\end{proof}

\begin{corollary}\label{cor25}
Let $R$ be a simple ring with extended centroid $C$, and let $p \in C[x]$ be a nonconstant polynomial, which is not central-valued on $R$.
If $|C|>2$, then $p[R, R]^+=[R,R].$
\end{corollary}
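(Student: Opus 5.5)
The statement of Corollary~\ref{cor25} follows directly from Theorem~\ref{thm52}. The plan is to show that the hypothesis $|C|>2$ rules out both exceptional configurations listed there, so that only the generic alternative $p[R,R]^+=[R,R]$ of part (i) survives.

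First, since $R$ is simple, $R=RC$ is a $C$-algebra, as noted after Theorem~\ref{thm27}, and $p\in C[x]$ is nonconstant and not central-valued on $R$. Thus Theorem~\ref{thm52} applies verbatim. It yields either (i), namely $p[R,R]^+=[R,R]$ outside one exceptional case, or (ii). Both the exceptional case of (i) and the whole of (ii) require $R\cong \mathrm{M}_2(\mathrm{GF}(2))$.

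Second, we show that $R\cong \mathrm{M}_2(\mathrm{GF}(2))$ forces $|C|=2$. The ring $\mathrm{M}_2(\mathrm{GF}(2))$ is simple and finite-dimensional over its center $\mathrm{GF}(2)$. For a simple ring, and in particular a simple unital algebra, the extended centroid coincides with the center, because $Q_s(R)=R$ when $R$ is simple with $1$. Hence $C\cong Z(\mathrm{M}_2(\mathrm{GF}(2)))=\mathrm{GF}(2)$, which has exactly two elements. This contradicts $|C|>2$.

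Therefore neither exception can occur, and case (i) gives $p[R,R]^+=[R,R]$. The only point needing care is the identification of the extended centroid with the center for the finite simple ring $\mathrm{M}_2(\mathrm{GF}(2))$. We justify it via the standard fact that $C=Z(R)$ whenever $R$ is a simple unital ring (see \cite{Bei_96}), applied through the ring isomorphism, since the extended centroid is an isomorphism invariant.
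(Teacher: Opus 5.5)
Your proposal is correct and follows the paper's route: the paper gives no separate proof, treating the corollary as read off from Theorem~\ref{thm52}, and both of that theorem's exceptional cases require $R\cong \mathrm{M}_2(\mathrm{GF}(2))$. That isomorphism is ruled out by $|C|>2$, since for this simple unital ring the extended centroid is its center $\mathrm{GF}(2)$.
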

				
				Theorem~\ref{thm52} naturally leads to the following.
				
				\begin{lemma}\label{lem71}
				Let $R$ be a noncommutative prime ring with extended centroid $C$, and let $p \in C[x]$ be a nonconstant polynomial of degree $n>1$. If $C$ contains at least $n+1$ distinct elements, then $p(x)$  is not central-valued on $RC$.
				\end{lemma}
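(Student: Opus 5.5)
We argue by contradiction: assume $p(x)$ is central-valued on $RC$, so $p(a)\in C$ for every $a\in RC$. The plan is to find a noncentral element $a$ of $RC$ that is algebraic of degree $2$ over $C$ with a prescribed minimal polynomial. Then the relation $p(a)\in C$, combined with $1,a$ being $C$-independent, forces a polynomial divisibility. That divisibility will in turn force $p$ to take a constant value at $n+1$ distinct points, contradicting $\deg p=n$.

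\textbf{Step 1: idempotents.} Since $R$ is noncommutative prime, $RC$ is a noncommutative prime $C$-algebra. For the key element, take $a=e$ a nontrivial idempotent if one exists, or more flexibly $a=\lambda e+\mu(1-e)$ with $\lambda\ne\mu$ in $C$. Such idempotents need not exist in $RC$ itself. However, central-valuedness of $p$ is a polynomial identity condition: $[p(x),y]=0$ on $RC$.

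\textbf{Step 2: pass to matrices.} By \cite[Theorem 2]{rowen1973}, as in the proof of Proposition~\ref{pro4}, $R$ is then a prime PI-ring, and $RC$ embeds in $\widehat R=\mathrm{M}_m(K)$ with $m\ge 2$. Here $K$ is $C$ itself if $C$ is finite and the algebraic closure of $C$ otherwise. By Lemma~\ref{lem5} (or directly, since $[p(x),y]$ is a polynomial identity and $RC$ is a central simple algebra), $p$ is central-valued on $\widehat R$. The finite case needs care: a finite prime PI ring is simple, hence $\mathrm{M}_m(D)$ with $D$ a finite division ring, which is a field by Wedderburn. So $RC=\mathrm{M}_m(C)$ directly.

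\textbf{Step 3: diagonal evaluations.} In $\mathrm{M}_m(K)$, evaluate $p$ on $a=\mathrm{diag}(\lambda,\mu,\dots,\mu)$ with $\lambda,\mu\in C$. Then $p(a)=\mathrm{diag}(p(\lambda),p(\mu),\dots,p(\mu))$, and central-valuedness gives $p(\lambda)=p(\mu)$ for all $\lambda,\mu\in C$. Hence $p(x)-p(0)$ has at least $|C|\ge n+1$ roots in $C$, all elements of $C$. Since $p(x)-p(0)$ is nonzero of degree $n$, it has at most $n$ roots, a contradiction.

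\textbf{Main obstacle.} The delicate point is justifying the transfer in Step 2 from $RC$ to a full matrix algebra $\mathrm{M}_m(K)$ in which $C$-scalar diagonal matrices are available, particularly when $C$ is finite. I would handle that case via Wedderburn as above, and the infinite case via Lemma~\ref{lem5} with $L=K$. Note that the hypothesis $\deg p>1$ is not really needed beyond $p$ being nonconstant; the count $|C|\ge n+1$ is exactly what forces the contradiction.
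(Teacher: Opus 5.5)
Your proof is correct, but it takes a different route from the paper's. The paper stays inside $RC$. It substitutes $\lambda a$ with $\lambda\in C$, so that $\sum_{i=1}^n\lambda^i\beta_i a^i\in C$. A Vandermonde argument with $n$ distinct nonzero scalars then isolates the top term and gives $a^n\in C$ for all $a\in R$. Finally it invokes an external commutativity theorem for prime rings satisfying $[x^n,y]=0$.

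You instead use the structure theory of prime PI rings to split $RC$ as $\mathrm{M}_m(K)$ with $m\ge 2$. When $C$ is finite this comes from Wedderburn's little theorem; when $C$ is infinite it comes from base change together with Lemma~\ref{lem5}. Evaluating $p$ at $\mathrm{diag}(\lambda,\mu,\dots,\mu)$ with $\lambda,\mu\in C$ then shows that $p$ is constant on $C$. Hence $p(x)-p(0)$, which has degree $n$, would have $|C|\ge n+1$ roots, a contradiction.

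What your approach buys is that the deep commutativity theorem is replaced by an elementary root count. It also makes transparent exactly where $|C|\ge n+1$ enters, and that $n>1$ is superfluous. The cost is the splitting and base-change machinery and a case split on $|C|$. The paper's argument needs no base extension, but it hides the real work in the cited result on prime rings in which all $n$-th powers are central.

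Two small presentation points. In the finite case, say explicitly that $RC$ is finite because it is finite-dimensional over the finite field $C$ (Posner); that is what makes ``a finite prime ring is simple'' applicable. Also, the opening plan in Step~1 about degree-$2$ elements and polynomial divisibility adds nothing beyond what Step~3 does directly, so it can be dropped.
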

				
				\begin{proof}
Suppose, to the contrary, that $p(x)$  is central-valued on $RC$. Then $R$ must be a PI-algebra with $Z(R)\ne \{0\}$ (see  \cite[Theorem 2]{rowen1973}).
Thus $C$ contains at least $n+1$ distinct elements. Write $\displaystyle p(x)=\sum_{i=0}^n\beta_ix^i$, where $\beta_i\in F$ and $\beta_n\ne 0$.
					Then
					$$
					p(\lambda a) - \beta_0 =\sum_{i=1}^n\lambda^i\beta_ia^i\in C
					$$
					for all $a\in R$ and $\lambda\in C$.  Solving it by a Vandermonde matrix argument, we get $a^n\in C$ for all $a\in R$ (see \cite[Lemma 2.2]{bresar2009}). Then $x^ny=yx^n$ for all $x, y\in R$. By \cite[Theorem, p.19]{lee1996},  it follows that $[x, y]=0$ for all $x, y\in R$.
This implies the commutativity of $R$, a contradiction.
				\end{proof}
				
				\begin{theorem}\label{thm73}
					Let $R$ be a noncommutative prime algebra over a field $F$, and let $p \in F[x]$ be a nonconstant polynomial of degree $n\geq 1$.
					Assume that  either $Z(R)=\{0\}$ or $Z(R)$ contains at least $n+1$ distinct elements.
					Then
					$[M, R]\subseteq p[R,R]^+\subseteq [R, R]$ for some nonzero ideal $M$ of $R$. In addition, if $R$ is a simple ring, then
					$p[R,R]^+=[R, R]$.
				\end{theorem}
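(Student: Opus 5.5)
The plan is to apply Theorem~\ref{thm4} to the two-variable polynomial $f(x,y):=p(xy)-p(yx)$ with $I=R$, once we know that $f$ is not central-valued on $RC$. The inclusion $p[R,R]^+\subseteq[R,R]$ is immediate from Eq.~\eqref{add}, which writes every additive $p$-commutator as a single additive commutator $[a,\sum_k\beta_k(ba)^{k-1}b]$.

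Consider first $n=1$, i.e.\ $p(x)=\beta_0+\beta_1x$ with $\beta_1\ne0$. Then $p[a,b]=\beta_1[a,b]$, so $p[R,R]^+=\beta_1[R,R]$. Since $R$ is an $F$-algebra and $\beta_1\in F$ is invertible, this equals $[R,R]$, and we may take $M=R$.

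Now let $n\ge 2$. We want to show that $p(x)$ is not central-valued on $RC$, and then use Corollary~\ref{cor24} or Proposition~\ref{pro4} to transfer this to $f$.
\begin{itemize}
\item If $Z(R)=0$, then $R$ is not a PI-ring with nonzero center. By \cite[Theorem 2]{rowen1973}, as used in the proof of Lemma~\ref{lem71}, a nonconstant $p$ that is central-valued on $RC$ would force $Z(R)\ne0$. So $p$ is not central-valued.
\item If $|Z(R)|\ge n+1$, then $C\supseteq Z(R)$ has at least $n+1$ elements, and Lemma~\ref{lem71} gives directly that $p$ is not central-valued on $RC$.
\end{itemize}
In both cases, Proposition~\ref{pro4} shows that $f$ is not central-valued on $RC$ unless $R\cong{\rm M}_2({\rm GF}(2))$. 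That exception is excluded here: it has $|Z(R)|=2<n+1$ and $Z(R)\neq0$. (Strictly, Proposition~\ref{pro4} needs $p\in C[x]$, which holds since $F\subseteq C$.)

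With $f$ not central-valued on $RC$ and the exceptional ${\rm M}_2({\rm GF}(2))$ case ruled out (note also $n=1$ is handled, and for $n\ge2$ we have $|Z(R)|\geq 3$ or $Z(R)=0$), Theorem~\ref{thm4} gives a nonzero ideal $M$ with $[M,R]\subseteq f(R)^+=p[R,R]^+$. If $R$ is simple, then $M=R$, so $[R,R]\subseteq p[R,R]^+\subseteq[R,R]$ and equality holds.

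The main obstacle is the step in the case $Z(R)=0$. There we must make sure that central-valuedness of $p$ on $RC$ really forces $R$ to be PI with nonzero center; the citation used in Lemma~\ref{lem71} is what I would rely on, noting that $p(x)-\beta_0$ central-valued is a GPI-type condition. One also has to check that the hypothesis ``$Z(R)$ has $n+1$ elements'' is compatible with the $|C|$ condition needed for Lemma~\ref{lem71}; this is immediate since $Z(R)\subseteq C$.
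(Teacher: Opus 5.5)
Your proposal is correct and follows the paper's proof essentially step for step. You handle $n=1$ directly, show $p$ is not central-valued via Rowen's theorem and Lemma~\ref{lem71}, pass to $f=p(xy)-p(yx)$ via Proposition~\ref{pro4}, and conclude with Theorem~\ref{thm4}. The only differences are cosmetic: you split into the cases $Z(R)=0$ and $|Z(R)|\ge n+1$ where the paper runs a single contradiction argument, and you explicitly rule out the ${\rm M}_2({\rm GF}(2))$ exception in Proposition~\ref{pro4}, which the paper leaves implicit.
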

				
				\begin{proof}
 As before, we always have $p[R,R]^+\subseteq [R, R]$.
					Clearly, we may assume that $p$ has no constant term.
					For $n=1$, it is clear that $p[R,R]^+=[R, R]$. Thus we assume $n\geq 2$.

					We claim that $p(x)$  is not central-valued on $R$. Otherwise, $p(x)$  is central-valued on $R$ and hence $R$ is a prime PI-algebra.
Thus $Z(R)\ne 0$ (see \cite[Theorem 2 and Corollary 1]{rowen1973}). Note that $C$, the extended centroid of $R$, is the quotient field of $Z(R)$. Hence $|C|\geq n+1$.
By Lemma \ref{lem71}, $p(x)$  is not central-valued on $R$, a contradiction. This proves the claim.

Hence $p(xy)-p(yx)\in F\langle x, y \rangle$ is not central-valued on $R$ (see Proposition \ref{pro4}).
In view of Theorem \ref{thm4}, $[M, R]\subseteq p[R,R]^+$ for some nonzero ideal $M$ of $R$ since either $Z(R)=\{0\}$ or $Z(R)$ contains at least $n+1$ distinct elements. In addition, if $R$ is a simple ring, then $M=R$ and so
					$p[R,R]^+=[R, R]$. This completes the proof.
				\end{proof}				
				
				Herstein also obtained a closely related result concerning $\overline{[R,R]}$. He proved that if $R$ is a noncommutative simple ring, then $R=\overline{[R,R]}$ (see \cite[Corollary, p.~6]{Bo_Her_69} together with the remark immediately following it).  Combining this classical observation with Theorem~\ref{thm73}, we obtain the following.
				
				\begin{corollary}\label{cor23}
					If $R$ is a noncommutative simple $F$-algebra and  let $p \in F[x]$ be a nonconstant
					polynomial, then $p[R,R]^+=[R, R]$  and ${\overline {p[R,R]}}=R$ except when $|R|<\infty$.
				\end{corollary}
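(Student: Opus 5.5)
The plan is to deduce Corollary~\ref{cor23} from Theorem~\ref{thm73} and Herstein's result $R=\overline{[R,R]}$. Throughout, $R$ is a noncommutative simple $F$-algebra and $p\in F[x]$ is nonconstant of degree $n\ge 1$; we may drop the constant term of $p$ without changing $p[R,R]$. The first step is to check the hypothesis of Theorem~\ref{thm73} whenever $R$ is infinite. A simple ring is prime, so $R$ is a noncommutative prime $F$-algebra. Either $Z(R)=\{0\}$, or $Z(R)$ is a field: for simple unital $R$ the center is a field, and a nonzero center forces $R$ to be unital, since a nonzero central element $z$ gives $Rz=R$ and $z$ acts as a unit.

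If $Z(R)$ is an infinite field, it certainly contains at least $n+1$ elements, and Theorem~\ref{thm73} gives $p[R,R]^+=[R,R]$. The remaining case is that $Z(R)$ is a finite field. Here I would split according to whether $R$ is PI:
\begin{itemize}
\item If $R$ satisfies a polynomial identity, Posner/Kaplansky gives that $R$ is finite-dimensional over its center. Then $R$ is a finite algebra, which is exactly the excluded case $|R|<\infty$.
\item If $R$ is not PI, then the argument in the proof of Theorem~\ref{thm73} still applies. The only use of the hypothesis $|Z(R)|\ge n+1$ there was to show that $p(x)$ is not central-valued, and this also follows because central-valuedness of $p$ forces $R$ to be PI \cite{rowen1973}. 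Since $|Z(R)|\ge 2$, a finite center still causes trouble only in the $\mathrm{M}_2(\mathrm{GF}(2))$ exception; this exception is finite, and $p(xy)-p(yx)$ is non-central by Proposition~\ref{pro4}. Hence Theorem~\ref{thm4} applies with $M=R$, and we get $[R,R]\subseteq p[R,R]^+$.
\end{itemize}
Combined with the inclusion $p[R,R]^+\subseteq[R,R]$ from Eq.\eqref{add}, this gives $p[R,R]^+=[R,R]$ whenever $R$ is infinite.

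The second statement then follows formally. Since $p[R,R]\subseteq\overline{p[R,R]}$ and the latter is a subring, in particular an additive group, we have $[R,R]=p[R,R]^+\subseteq\overline{p[R,R]}$. This gives
\[
R=\overline{[R,R]}\subseteq\overline{p[R,R]}\subseteq R
\]
by Herstein's theorem \cite{Bo_Her_69}.

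The main obstacle I expect is the bookkeeping in the finite-center case, namely making sure that the only genuine failures are finite rings. This uses the PI dichotomy: a prime PI ring with finite center is finite-dimensional over that center, hence finite. It also uses Lemma~\ref{lem71} and Proposition~\ref{pro4}, whose only exception is $\mathrm{M}_2(\mathrm{GF}(2))$, again a finite ring. Once these pieces are in place, every infinite $R$ falls under Theorem~\ref{thm73} or its non-PI variant.
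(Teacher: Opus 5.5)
Your proposal is correct and follows essentially the paper's route. The paper argues by contradiction that $p(xy)-p(yx)$ is not central-valued on an infinite $R$: central-valuedness would make $R$ PI, hence finite-dimensional over an infinite center, contradicting Lemma~\ref{lem71} and Proposition~\ref{pro4}; it then invokes Theorem~\ref{thm73} and Herstein's theorem, which is the same PI dichotomy you organize as a case split on $Z(R)$. Your explicit rerun of the argument via Theorem~\ref{thm4} in the finite-center non-PI case, where the hypothesis of Theorem~\ref{thm73} on $Z(R)$ can fail, is slightly more careful than the paper's direct citation of Theorem~\ref{thm73}, which tacitly relies on that theorem's proof rather than its statement.
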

				
				\begin{proof}
					Assume that $|R|=\infty$. We claim that $p(xy)-p(yx)\in F\langle x, y \rangle$ is not central-valued on $R$.
					Otherwise, $R$ is a PI-algebra. In this case, $Z(R)\ne 0$ and $\dim_{Z(R)}R<\infty$ (see \cite[Theorem 2 and Corollary 1]{rowen1973}). Hence $Z(R)$ is an infinite field. In view of Lemma \ref{lem71}, $p(x)$  is not central-valued on $R$.
 By Proposition \ref{pro4}, the polynomial $p(xy)-p(yx)\in F\langle x, y \rangle$ is not central-valued on $R$, a contradiction.	Thus the claim is proved.
It follows from Theorem \ref{thm73} that $p[R,R]^+=[R, R]$ and hence, by Herstein's theorem,  ${\overline {p[R,R]}}=R$.
				\end{proof}

				\section{The phenomenon: $\mathrm{M}_n(D)=\mathrm{SL}_n(D)-\mathrm{SL}_n(D)$}\label{sec phe}
				
				A key starting point is \cite[Theorem~4.9]{Pa_PaSo_26}, which shows that every matrix over a centrally finite algebraically closed division ring can, under suitable assumptions, be expressed as a difference of two multiplicative commutators from images of polynomials.  A crucial ingredient in the proof is \cite[Lemma~4.7]{Pa_PaSo_26}, asserting that if $D$ is a division ring and $n > 1$, then for every matrix $A \in \mathrm{M}_n(D)$ there exist matrices $B, C \in \mathrm{SL}_n(D)$ such that $A = B - C$. This lemma, in fact, goes back to M.~Mahdavi-Hezavehi \cite[Lemma~5.1]{Pa_Mad_00}, who established the striking identity
				$\mathrm{M}_n(D) = \mathrm{SL}_n(D) - \mathrm{SL}_n(D)$
				for every integer $n \ge 2$. Beyond its intrinsic interest, this phenomenon has significant structural consequences. Building on it, Mahdavi-Hezavehi proved that any subgroup $G$ of finite index in $\mathrm{GL}_n(D)$ is contained in a maximal normal subgroup of finite index in $\mathrm{GL}_n(D)$. As an application, he answered two questions posed in \cite{Pa_Be_92} concerning $(G - G)$-rings over a division algebra of finite dimension over its centre. In light of this result, it is natural to ask whether the same equality remains valid in the case $n = 1$. Somewhat surprisingly, the answer is negative - even in one of the most classical noncommutative settings, namely the real quaternion division ring.

				To set the stage, we briefly recall some notation. As a set, the real quaternion division ring is $\mathbb{H}
				= \{ a + bi + cj + dk \mid a,b,c,d \in \mathbb{R},\ i^2 = j^2 = k^2 = ijk = -1 \}.$
				The structure of the set generated by multiplicative commutators in $\mathbb{H}$ was already investigated by Wang in 1950 \cite{Pa_Wa_50}. He proved that the subgroup  $\mathrm{SL}_1(\mathbb{H})$ generated by all multiplicative commutators in $\mathbb{H}$ coincides with
				\[
				\mathrm{SL}_1(\mathbb{H})
				= \{ a + bi + cj + dk \mid a,b,c,d \in \mathbb{R},\ a^2+b^2+c^2+d^2 = 1 \}.
				\]
				In view of Wang's result, the problem we are led to consider is the following: which quaternions can be expressed as the difference of two elements of norm one?
				
				In what follows, for each quaternion $\alpha = a + bi + cj + dk \in \mathbb{H}$, we  define its conjugate by
				$\overline{\alpha} = a - bi - cj - dk,$
				its norm by
				$N(\alpha) = a^2 + b^2 + c^2 + d^2,$
				and its length by $\|\alpha\| = \sqrt{N(\alpha)}.$
				If $v = v_0 + v_1 i + v_2 j + v_3 k$ and $q = q_0 + q_1 i + q_2 j + q_3 k$ are elements of $\mathbb{H}$, we denote by
				$\langle v, q \rangle = v_0 q_0 + v_1 q_1 + v_2 q_2 + v_3 q_3$
				their Euclidean inner product. With this notation, $\|q\| = \sqrt{\langle q,q \rangle}$.
				
				\begin{remark}\label{uv}
					For all $u,v \in \mathbb{H}$ one has the inequality $\|u - v\| \le \|u\| + \|v\|$.
					Indeed, note first that for any $\alpha,\beta \in \mathbb{H}$ we have $N(\alpha) = \|\alpha\|^2$ and
					$\operatorname{Re}(\alpha \overline{\beta}) = \langle \alpha,\beta \rangle$. Hence,
					$$\|u - v\|^2 = N(u - v)
					= (u - v)(\overline{u} - \overline{v})
					= N(u) + N(v) - u\overline{v} - v\overline{u}.$$
					Taking real parts and using $\operatorname{Re}(v\overline{u}) = \operatorname{Re}(u\overline{v})$, we obtain
					$$\|u - v\|^2 = N(u) + N(v) - 2\,\operatorname{Re}(u\overline{v}).$$
					Applying the Cauchy--Schwarz inequality in $\mathbb{R}^4$ (or the estimate
					$|\operatorname{Re}(z)| \le \|z\|$ for all $z \in \mathbb{H}$), we get that $|\operatorname{Re}(u\overline{v})| \le \|u\overline{v}\| = \|u\|\,\|v\|.$
					Hence, it follows that
					$$\|u - v\|^2 \le N(u) + N(v) + 2\|u\|\,\|v\|
					= (\|u\| + \|v\|)^2.$$
					Since both sides are nonnegative, the desired inequality $\|u - v\| \le \|u\| + \|v\|$ follows.
				\end{remark}
				
				We are now in a position to address the  question: which quaternions can be written as the difference of two elements of norm one? The following theorem provides a complete answer to this question.
				
				\begin{theorem}\label{qu}
					A quaternion $q \in \mathbb{H}$ can be written in the form $q = u - v$, where
					$u, v \in \mathbb{H}$ satisfy $N(u) = N(v) = 1$, if and only if $\|q\| \le 2$.
				\end{theorem}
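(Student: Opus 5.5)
Necessity comes directly from Remark~\ref{uv}: if $q=u-v$ with $N(u)=N(v)=1$, then $\|q\|=\|u-v\|\le\|u\|+\|v\|=2$. All the work is in the converse. Given $q$ with $\|q\|\le 2$, we must produce unit quaternions $u,v$ with $u-v=q$.

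For sufficiency we view $\mathbb{H}$ as the Euclidean space $\mathbb{R}^4$ with the inner product $\langle\cdot,\cdot\rangle$, and look for $u,v$ placed symmetrically about $q/2$. If $q=0$, take $u=v=1$. If $q\neq 0$, choose a nonzero $w\in\mathbb{H}$ with $\langle w,q\rangle=0$. Such a $w$ exists because the orthogonal complement of $q$ in $\mathbb{R}^4$ is $3$-dimensional; concretely, $w=qi$ works, since $\langle qi,q\rangle=\operatorname{Re}(qi\overline{q})=N(q)\operatorname{Re}(i)=0$. Rescale $w$ so that $\|w\|=1$. Then set
\[
u=\tfrac{1}{2}q+t\,w,\qquad v=-\tfrac{1}{2}q+t\,w,\qquad t=\sqrt{1-\tfrac{1}{4}N(q)},
\]
where $t$ is real precisely because $\|q\|\le 2$.

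It remains to check the two required properties. First, $u-v=q$ holds by construction. Second, the norms: since $\langle q,w\rangle=0$, the Pythagorean identity (which follows from the expansion $N(\alpha+\beta)=N(\alpha)+N(\beta)+2\langle\alpha,\beta\rangle$, as in the computation of Remark~\ref{uv}) gives
\[
N(u)=N(v)=\tfrac{1}{4}N(q)+t^2=1.
\]
The only genuine point to verify is this orthogonality step. The boundary case $\|q\|=2$ needs no separate treatment: there $t=0$ and $u=-v=q/2$. Finally, by Wang's description of $\mathrm{SL}_1(\mathbb{H})$, the result can be restated as $\mathrm{SL}_1(\mathbb{H})-\mathrm{SL}_1(\mathbb{H})=\{q\in\mathbb{H}: \|q\|\le 2\}$.
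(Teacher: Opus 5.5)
Your proof is correct and takes the same approach as the paper: necessity via the triangle inequality of Remark~\ref{uv}, and for sufficiency the paper sets $v=-\frac{\|q\|}{2}e+\sqrt{1-\frac{\|q\|^2}{4}}\,w$ with $e=q/\|q\|$, $w$ a unit vector orthogonal to $e$, and $u=v+q$. This is exactly your symmetric pair $v=-\frac12 q+tw$, $u=\frac12 q+tw$. Your explicit choice $w=qi/\|q\|$, with the check $\operatorname{Re}(qi\overline{q})=0$, is a valid concrete way to produce the orthogonal unit vector that the paper simply asserts exists.
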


				\begin{proof}
					Let $q \in \mathbb{H}$ be arbitrary. Suppose first that $q = u - v$ for some
					$u, v \in \mathbb{H}$ with $N(u) = N(v) = 1$. By Remark~\ref{uv}, we obtain $\|q\| = \|u - v\| \le \|u\| + \|v\| = 2$, as promised. We now prove the converse. Assume that $\|q\| \le 2$. If $\|q\| = 0$, then
					$q = 0 = 1 - 1$, so we may take $u = v = 1$. Henceforth, assume that
					$0 < \|q\| \le 2$.
					
					Our goal is to find $v \in \mathbb{H}$ with $N(v) = 1$ such that, setting
					$u := v + q$, we also have $N(u) = 1$. Equivalently, we seek $v \in \mathbb{H}$
					satisfying $\|v\| = 1$ and $\|v + q\| = 1$. This is the same as requiring $\|v + q\|^2 - \|v\|^2 = 0.$
					Expanding the left-hand side yields
					$$\|v + q\|^2
					= \langle v + q, v + q \rangle
					= \|v\|^2 + 2\langle v, q \rangle + \|q\|^2,$$
					and therefore $2\langle v, q \rangle + \|q\|^2 = 0,$ or equivalently
					$\langle v, q \rangle = -\frac{\|q\|^2}{2}.$ Let $e = q / \|q\|$. Then
					$$\langle v, e \rangle
					= \frac{1}{\|q\|}\langle v, q \rangle
					= -\frac{\|q\|}{2}.$$
					Thus, it suffices to find a unit quaternion $v$ such that its inner product
					with $e$ equals $-\|q\|/2$. Since $\|q\| \le 2$, we have $-1 \le -\frac{\|q\|}{2} \le 0.$ Choose a unit quaternion $w \in \mathbb{H}$ orthogonal to $e$, that is,
					$\|w\| = 1$ and $\langle w, e \rangle = 0$. Define
					$v = -\frac{\|q\|}{2}\, e
					+ \sqrt{1 - \frac{\|q\|^2}{4}}\, w.$
					Using the standard formula for the squared norm of a sum, together with the
					orthogonality of $e$ and $w$, we can compute $\|v\|^2
					= \frac{\|q\|^2}{4}
					+ \left(1 - \frac{\|q\|^2}{4}\right)
					= 1,$
					and
					$\langle v, e \rangle = -\frac{\|q\|}{2}.$
					Hence $v$ has unit norm and satisfies the required inner product condition.
					Finally, setting $u := v + q$, we obtain $\|u\| = 1$ and $u - v = q$, which
					completes the proof.
				\end{proof}
				
				Besides, Theorem~\ref{qu}, combined with the classical result of Wang \cite{Pa_Wa_50}, yields the identity
				$\mathrm{SL}_1(\mathbb{H}) - \mathrm{SL}_1(\mathbb{H})
				= \{\, q \in \mathbb{H} \mid \|q\| \le 2 \,\},$
				which is a proper subset of $\mathbb{H}$. On the other hand, it is shown in \cite[Lemma 2.1]{Pa_Du_25} that every element of $\mathrm{SL}_1(\mathbb{H})$ can be expressed as a single commutator of the form $aba^{-1}b^{-1}$, where $a, b \in \mathbb{H}$ satisfy $a^2 = b^2 = -1$. Moreover, one easily verifies that $$\{\, a \in \mathbb{H} \mid a^2 = -1 \,\}
				=
				\{\, \alpha i + \beta j + \gamma k \mid \alpha, \beta, \gamma \in \mathbb{R},\; \alpha ^2 + \beta^2 + \gamma^2 = 1 \,\}.$$ Furthermore, by \cite[Theorem 1]{Pa_Ma_21},  if $p \in \mathbb{R}\langle \mathcal{X} \rangle$ is  multilinear  such that $p(\mathbb{H}) \notin \{\{0\}, \mathbb{R}\}$, then the entire space of purely imaginary quaternions
				$\{\, \alpha i + \beta j + \gamma k \mid \alpha, \beta, \gamma \in \mathbb{R} \,\}$
				is contained in $p(\mathbb{H})$. Note that a multilinear polynomial has the form
				$$\displaystyle p = \sum_{\sigma \in S_m} \lambda_\sigma \, x_{\sigma(1)} x_{\sigma(2)} \cdots x_{\sigma(m)},$$
				where $S_m$ denotes the symmetric group of degree $m$, and each coefficient $\lambda_\sigma$ belongs to $\mathbb{R}$. As a consequence, we obtain that $\{\, a \in \mathbb{H} \mid a^2 = -1 \,\} \subseteq p(\mathbb{H}).$ Combining these observations, we arrive at the following theorem.
				
				\begin{theorem}\label{qq}
					\
					\begin{enumerate}[\rm (i)]
						\item If $p_1, p_2 \in \mathbb{R}\langle \mathcal{X} \rangle$ are multilinear polynomials such that $p_1(\mathbb{H}), p_2(\mathbb{H}) \notin \{\{0\}, \mathbb{R}\}$, then
						$\mathrm{SL}_1(\mathbb{H})
						\subseteq
						\{\, xyx^{-1}y^{-1} \mid x \in p_1(\mathbb{H}),\; y \in p_2(\mathbb{H}) \,\}.$
						\item If $q \in \mathbb{H}$ and  $p_i \in \mathbb{R}\langle \mathcal{X} \rangle$, for $i \in \{1,2,3,4\}$, are multilinear polynomials such that $\|q\| \le 2$ and $p_i(\mathbb{H}) \notin \{\{0\}, \mathbb{R}\}$ for each $i$, then there exist elements $a_i \in p_i(\mathbb{H})$, for $i \in \{1,2,3,4\}$, such that
						$q
						=
						a_1 a_2 a_1^{-1} a_2^{-1}
						-
						a_3 a_4 a_3^{-1} a_4^{-1}.$
					\end{enumerate}
				\end{theorem}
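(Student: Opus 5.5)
The plan is to combine three facts recalled just before the statement. The first is Wang's description of $\mathrm{SL}_1(\mathbb{H})$ as the unit quaternions. The second is \cite[Lemma 2.1]{Pa_Du_25}: every element of $\mathrm{SL}_1(\mathbb{H})$ has the form $aba^{-1}b^{-1}$ with $a^2=b^2=-1$. The third is \cite[Theorem 1]{Pa_Ma_21}: for a multilinear $p$ with $p(\mathbb{H})\notin\{\{0\},\mathbb{R}\}$, the whole space of pure quaternions lies in $p(\mathbb{H})$. Together with the explicit description of $\{a\in\mathbb{H}\mid a^2=-1\}$ as the unit pure quaternions, the third fact gives
$$\{\,a\in\mathbb{H}\mid a^2=-1\,\}\subseteq p(\mathbb{H})$$
for every such multilinear $p$.

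For part (i), take any $g\in \mathrm{SL}_1(\mathbb{H})$. Using \cite[Lemma 2.1]{Pa_Du_25}, write $g=aba^{-1}b^{-1}$ with $a^2=b^2=-1$. By the inclusion displayed above, applied to $p_1$ and $p_2$ separately, we get $a\in p_1(\mathbb{H})$ and $b\in p_2(\mathbb{H})$. Both elements are nonzero, so their inverses exist. Setting $x=a$ and $y=b$ then exhibits $g$ in the required set. The only point needing care is that the inclusion is used independently for each $p_i$; both hypotheses are exactly what the displayed inclusion requires.

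For part (ii), let $q\in\mathbb{H}$ with $\|q\|\le 2$. Theorem~\ref{qu} gives $q=u-v$ with $N(u)=N(v)=1$. By Wang's result, $u,v\in \mathrm{SL}_1(\mathbb{H})$. Applying part (i) to the pair $(p_1,p_2)$ gives $u=a_1a_2a_1^{-1}a_2^{-1}$ with $a_1\in p_1(\mathbb{H})$ and $a_2\in p_2(\mathbb{H})$. Applying it again to the pair $(p_3,p_4)$ gives $v=a_3a_4a_3^{-1}a_4^{-1}$ with $a_3\in p_3(\mathbb{H})$ and $a_4\in p_4(\mathbb{H})$. Subtracting these two expressions yields the claimed representation of $q$.

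There is no genuine obstacle, since all the substantive work sits in the cited results and in Theorem~\ref{qu}. The step most worth double-checking is the passage from \cite[Theorem 1]{Pa_Ma_21} to the inclusion of the unit pure quaternions in $p(\mathbb{H})$. This relies only on the characterization of square roots of $-1$ in $\mathbb{H}$: if $a=\alpha+w$ with $w$ pure, then $a^2=\alpha^2-N(w)+2\alpha w$. Hence $a^2=-1$ forces $\alpha w=0$ and $\alpha^2-N(w)=-1$. If $w=0$ this gives $\alpha^2=-1$, which is impossible for real $\alpha$, so $w\neq 0$, hence $\alpha=0$ and $N(w)=1$. Conversely, every unit pure quaternion squares to $-1$.
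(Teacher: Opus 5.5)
Your proposal is correct and is essentially the paper's argument. The paper gets (i) by combining Wang's description of $\mathrm{SL}_1(\mathbb{H})$, the skew-involution commutator representation from \cite[Lemma 2.1]{Pa_Du_25}, and the inclusion of all pure quaternions in $p(\mathbb{H})$ from \cite[Theorem 1]{Pa_Ma_21}, and then derives (ii) from Theorem~\ref{qu}, exactly as you do; your explicit check that the square roots of $-1$ are precisely the unit pure quaternions supplies the step the paper calls an easy verification.
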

				
				Theorem~\ref{qq} clarifies the situation in the case $n = 1$, when the underlying division ring is the real quaternion division ring. It is therefore natural to ask how the picture changes once we pass to higher matrix sizes, that is, when $n \geq 2$. The following theorem shows that, in contrast to the one-dimensional case, a much stronger structural phenomenon occurs.
				
				\begin{theorem}\label{real}
					Let $n \geq 2$ be an integer.
					\begin{enumerate}[\rm (i)]
						\item If $p_i \in \mathbb{R}\langle \mathcal{X} \rangle$, for $i \in \{1,\dots,4\}$, are multilinear polynomials with $p_i(\mathbb{H}) \notin \{\{0\}, \mathbb{R}\}$ for each $i$, then
						$$\mathrm{SL}_n(\mathbb{H}) \subseteq
						\left\{
						A_1A_2A_1^{-1}A_2^{-1}
						A_3A_4A_3^{-1}A_4^{-1}
						\;\middle|\;
						A_i \in p_i\big(\mathrm{M}_n(\mathbb{H})\big)
						\right\}.$$
						\item If $p_i \in \mathbb{R}\langle \mathcal{X} \rangle$, for $i \in \{1,\dots,8\}$, are multilinear polynomials with $p_i(\mathbb{H}) \notin \{\{0\}, \mathbb{R}\}$ for each $i$, then every matrix $A$ in $	\mathrm{M}_n(\mathbb{H})$ can be expressed as $$A=A_1A_2A_1^{-1}A_2^{-1}
						A_3A_4A_3^{-1}A_4^{-1}
						-
						A_5A_6A_5^{-1}A_6^{-1}
						A_7A_8A_7^{-1}A_8^{-1}$$ for some $A_i \in p_i\big(\mathrm{M}_n(\mathbb{H})$.
					\end{enumerate}
				\end{theorem}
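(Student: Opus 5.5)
Part (ii) follows from part (i) by Mahdavi-Hezavehi's identity $\mathrm{M}_n(D)=\mathrm{SL}_n(D)-\mathrm{SL}_n(D)$ for $n\ge 2$ (\cite[Lemma~5.1]{Pa_Mad_00}). Given $A\in\mathrm{M}_n(\mathbb{H})$, we write $A=B-C$ with $B,C\in\mathrm{SL}_n(\mathbb{H})$. Applying (i) to $B$ with $p_1,\dots,p_4$ and to $C$ with $p_5,\dots,p_8$ gives the required expression. The work is therefore in (i).

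For (i) we use two ingredients. The first is a known result that every element of $\mathrm{SL}_n(D)$, $n\ge 2$, is a product of two multiplicative commutators. For $D=\mathbb{H}$ we expect even more to be available from the literature (e.g.\ results of Thompson/Amitsur--Rowen type, or \cite{Pa_Du_25}): one can arrange each commutator factor as $XYX^{-1}Y^{-1}$ with $X,Y$ of a restricted, diagonalizable form. The second ingredient is the observation preceding Theorem~\ref{qq}, namely $\{a\in\mathbb{H}\mid a^2=-1\}\subseteq p_i(\mathbb{H})$, upgraded to matrices.

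To make this upgrade, let $a_1,\dots,a_n$ be quaternions with $a_j^2=-1$. By \cite[Theorem 1]{Pa_Ma_21}, each $a_j=p(h^{(j)}_1,\dots,h^{(j)}_m)$ for some $h^{(j)}_k\in\mathbb{H}$. By multilinearity, evaluating $p$ on the diagonal matrices $\mathrm{diag}(h^{(1)}_k,\dots,h^{(n)}_k)$ gives $\mathrm{diag}(a_1,\dots,a_n)$. Since images are invariant under conjugation, every matrix similar to such a diagonal matrix lies in $p(\mathrm{M}_n(\mathbb{H}))$. Every quaternion with $a^2=-1$ is conjugate to $i$, so such diagonal matrices are all similar to $i\mathrm{I}_n$. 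Hence the set $\mathcal{J}=\{X\in\mathrm{M}_n(\mathbb{H})\mid X^2=-\mathrm{I}_n\}$ lies in $p_i(\mathrm{M}_n(\mathbb{H}))$ for every $i$. Here we also use that every $X$ with $X^2=-\mathrm{I}_n$ is similar to $i\mathrm{I}_n$, which follows from the diagonalizability of $X$ (it is annihilated by the separable central polynomial $x^2+1$) together with the fact that all roots of $x^2+1$ in $\mathbb{H}$ are conjugate.

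It then suffices to show that every $S\in\mathrm{SL}_n(\mathbb{H})$ is a product $(X_1X_2X_1^{-1}X_2^{-1})(X_3X_4X_3^{-1}X_4^{-1})$ with all $X_i\in\mathcal{J}$. Note that $X^{-1}=-X$ for $X\in\mathcal{J}$, so each factor equals $(X_1X_2)^2$. Our plan is to reduce to a normal form. First, by Lemma~\ref{LHU} (or a Bruhat-type argument), $S$ is similar to $LHU$ with $H=\mathrm{diag}(1,\dots,1,h)$, and $h$ has Dieudonné determinant $1$, i.e.\ $N(h)=1$. Second, we split $S$ as a product of two matrices, each conjugate to a block-diagonal matrix whose blocks are either norm-one scalars or unipotent/rotation blocks. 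Third, we realize each such factor as $(X_1X_2)^2$, where $X_1X_2$ is a square root in the group generated by pairs from $\mathcal{J}$. In dimension one, \cite[Lemma 2.1]{Pa_Du_25} gives exactly this for norm-one quaternions, and we take direct sums of these.

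The main obstacle is the unipotent, nondiagonalizable part. A nontrivial unipotent matrix must be written as $(X_1X_2)^2$ with $X_1^2=X_2^2=-\mathrm{I}$. For this we would work in $\mathrm{M}_2(\mathbb{H})\supseteq\mathrm{M}_2(\mathbb{C})$. There we try $X_1=\mathrm{diag}(i,-i)$ and $X_2$ a suitable conjugate of it, so that $X_1X_2$ is a unipotent matrix times $-\mathrm{I}$, whose square gives the needed Jordan block. We then assemble the general case by the Botha-style decomposition of $S$ into two factors, each avoiding problematic block sizes.
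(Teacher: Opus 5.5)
\textbf{Verdict: genuine gap.} The part that carries the proof, that every element of $\mathrm{SL}_n(\mathbb{H})$ is a product of two commutators of skew involutions (the paper's Lemma~\ref{skew}), is never established.

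The correct pieces match the paper. Your derivation of (ii) from (i) via Mahdavi-Hezavehi's identity is the paper's argument. Your proof that every $X$ with $X^2=-\mathrm{I}_n$ lies in each $p_i(\mathrm{M}_n(\mathbb{H}))$ is correct: Lemma~\ref{skew1} gives similarity to $\mathrm{diag}(i\mathrm{I}_r,-i\mathrm{I}_{n-r})$, and $jij^{-1}=-i$ takes you on to $i\mathrm{I}_n$. The identity $X_1X_2X_1^{-1}X_2^{-1}=(X_1X_2)^2$ is right. So is the normal form $S\sim LHR$ with $L,R$ unitriangular, $H=\mathrm{diag}(1,\dots,1,h)$ and $N(h)=1$; this needs $S\neq\pm\mathrm{I}_n$, and those two central elements are done by hand.

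Your second step splits $S$ into two factors, each conjugate to a block-diagonal matrix with norm-one scalar or unipotent blocks. No construction is given. ``A Botha-style decomposition avoiding problematic block sizes'' does not say how such factors arise from $LHR$. It also does not say how unipotent Jordan blocks of size $\geq 3$ would be handled; you only sketch size $2$. The restriction to norm-one scalar blocks is forced on you, because every multiplicative commutator in $\mathbb{H}$ has norm $1$. A factor such as $\mathrm{diag}(4,\tfrac14)$ is therefore out of reach of direct sums of $1\times1$ commutators, and nothing in the proposal shows that a split avoiding such factors exists. This is why the paper uses the $2\times2$ blocks $\begin{pmatrix}0&h\\-h^{-1}&0\end{pmatrix}$ and $\begin{pmatrix}0&1\\-1&0\end{pmatrix}$.

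The missing idea is to choose the two factors so that no unipotent part ever appears. Write $h=xyx^{-1}y^{-1}$ and put $E=\mathrm{diag}(1,\dots,1,x)$. Let $V$ be diagonal with last entry $y$, so that $H=EVE^{-1}V^{-1}$ and
\[ LHR=E\,(E^{-1}LEV)\,E^{-1}\cdot V^{-1}R. \]
Here $E^{-1}LEV$ is lower triangular with diagonal $V$, and $V^{-1}R$ is upper triangular with diagonal $V^{-1}$. If the entries of $V$ are pairwise nonconjugate, both factors are diagonalizable by \cite[Lemma~3.2]{Pa_BiDuHaSo_2022}. It then remains only to write two diagonal matrices as commutators of skew involutions; the paper does this with the $2\times2$ blocks above.

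With this trick your own $1\times1$ tool should already suffice. Rescale $y$ to norm one, which is allowed since real scalars are central. Choose the other entries of $V$ of norm one, with real parts pairwise distinct and different from $\operatorname{Re}(y)$; unit quaternions are conjugate exactly when their real parts agree. Then both diagonal matrices have only norm-one entries. Each entry has the form $aba^{-1}b^{-1}$ with $a^2=b^2=-1$ by \cite[Lemma~2.1]{Pa_Du_25}, and taking direct sums would finish (i) without the paper's parity split.
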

				
				In particular, when $n \geq 2$, every element of $\mathrm{SL}_n(\mathbb{H})$, and even every matrix in $\mathrm{M}_n(\mathbb{H})$, can be described in terms of products and differences of multiplicative commutators whose entries lie in suitable polynomial images. This highlights a striking contrast with the case $n = 1$ and underscores the richer structure that emerges in higher dimensions.
				
				Our strategy for proving Theorem~\ref{real} is based on the following result, which concerns products of two commutators of skew involutions. Note that a skew involution in a ring is an element $a$ such that $a^2 = -1$.
				After establishing this structural statement, we then exploit the similarity of skew involutions to complete the argument.

				\begin{lemma}\label{skew}
					If $n\geq2$ is an integer, then every element of
					$\mathrm{SL}_n(\mathbb H)$ can be expressed as a product of at most two multiplicative
					commutators of skew involutions in $\mathrm{M}_n(\mathbb H)$.
				\end{lemma}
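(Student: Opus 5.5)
The plan is to reduce $\mathrm{SL}_n(\mathbb H)$ to products of block-diagonal pieces in which every block is either a $1\times 1$ norm-one quaternion or a $2\times 2$ block whose commutator structure is easy to control. Here $\mathrm{SL}_n(\mathbb H)$ means the commutator subgroup of $\mathrm{GL}_n(\mathbb H)$, that is, the kernel of the Dieudonné determinant. Since $\mathbb H^*/[\mathbb H^*,\mathbb H^*]\cong\mathbb R_{>0}$ via the norm, this kernel is the set of matrices of reduced norm $1$. There are two basic inputs.

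First, by \cite[Lemma 2.1]{Pa_Du_25}, every norm-one quaternion is a single commutator $aba^{-1}b^{-1}$ with $a^2=b^2=-1$.

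Second, in $\mathrm{M}_n(\mathbb H)$ with $n\ge 2$, every skew involution is similar to $i\mathrm{I}_n$. This holds because $\mathrm{M}_n(\mathbb H)\cong\mathrm{End}_{\mathbb H}(\mathbb H^n)$, and an element with $a^2=-1$ makes $\mathbb H^n$ a module over $\mathbb H\otimes_{\mathbb R}\mathbb C\cong \mathrm M_2(\mathbb C)$, which has a unique simple module. In particular, any conjugate of a commutator of skew involutions is again such a commutator, and the block diagonal sum of skew involutions is a skew involution.

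The main step is a normal form. I will show that every $A\in\mathrm{SL}_n(\mathbb H)$ is a product $A=XY$, where $X$ and $Y$ are each similar to a block diagonal matrix $\bigoplus_k C_k$. Each block $C_k$ is either a norm-one quaternion or a $2\times2$ matrix of the form $uvu^{-1}v^{-1}$ with $u^2=v^2=-\mathrm I_2$.

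The model is the classical fact that over a field every element of $\mathrm{SL}_n$ is a product of two commutators. Here I would follow the proof of Theorem~\ref{mainthm}.
\begin{enumerate}[\rm (1)]
\item If $A$ is a central scalar $\pm\mathrm I_n$, handle it directly. Write $\pm1$ as a product of norm-one quaternion commutators taken diagonally; for example, $-1=iji^{-1}j^{-1}$.
\item Otherwise, apply Lemma~\ref{LHU} with $h_1=\cdots=h_{n-1}=1$ to get $P^{-1}AP=LHU$ with $H=\mathrm{diag}(1,\dots,1,h_n)$. The reduced norm forces $N(h_n)=1$.
\item Absorb diagonal real scalars $x_1,\dots,x_{n-1}$ into $L$ and $U$ as in the proof of Theorem~\ref{mainthm}, choosing them distinct and different from $\pm1$, with product conditions keeping both triangular factors of reduced norm $1$.
\item Use Lemma~\ref{BiSo} or Lemma~\ref{cheo khoi} to diagonalize each triangular factor. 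This yields diagonal matrices in $\mathrm{SL}_n(\mathbb H)$ with real entries $x_i$ and one quaternion entry.
\end{enumerate}

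It remains to write a diagonal matrix $\mathrm{diag}(d_1,\dots,d_n)\in\mathrm{SL}_n(\mathbb H)$ as one commutator of skew involutions. Pair the entries into $2\times2$ blocks $\mathrm{diag}(r,r^{-1})$ with $r>0$, and in odd size absorb the leftover entry into a norm-one quaternion. A block $\mathrm{diag}(r,r^{-1})$ is a commutator $uvu^{-1}v^{-1}$ of skew involutions in $\mathrm M_2(\mathbb R)\subseteq\mathrm M_2(\mathbb H)$. Indeed, in $\mathrm{SL}_2(\mathbb R)$, the matrix $\mathrm{diag}(r,r^{-1})$ equals $J\,(gJg^{-1})^{-1}=J\,gJ^{-1}g^{-1}\cdot(-1)$-type products with $J=\begin{pmatrix}0&-1\\1&0\end{pmatrix}$. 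Concretely, $J\cdot J'$ with $J'$ a conjugate of $J$ realizes every hyperbolic element, and $J'^{-1}=-J'$ is again a skew involution. Finally, assemble the blocks into one commutator $[U,V]$ by block diagonal sums, using the norm-one quaternion lemma for $1\times1$ blocks.

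The main obstacle is the bookkeeping in step (3): the scalar choices must keep each factor of reduced norm $1$, with distinct diagonal entries so that Lemma~\ref{BiSo} applies, and with paired inverses so that the diagonal form splits into commutator blocks. If this becomes awkward, I would instead apply Lemma~\ref{Rowen} and induct on $n$ as in the proof of Theorem~\ref{q>n}, peeling off $2\times 2$ blocks where Theorem~\ref{recent}(ii)-style explicit formulas can be checked by hand.
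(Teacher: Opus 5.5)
Your overall strategy is essentially the paper's. Both apply Lemma~\ref{LHU} with $T=\mathrm{diag}(1,\dots,1,t)$, $N(t)=1$, split into two triangular factors whose diagonals consist of reciprocal real pairs, diagonalize, and realize $2\times2$ blocks as commutators of skew involutions. The splitting device differs. The paper writes $T=UVU^{-1}V^{-1}$ with $U=\mathrm{diag}(1,\dots,1,x)$ and $V=\mathrm{diag}(h_1^2,h_1^{-2},\dots,y^{-1},y)$, then regroups $A=U(U^{-1}XUV)U^{-1}\cdot V^{-1}Y$. The quaternion part thus becomes a noncentral pair $y^{-1},y$. This costs a diagonalization result for nonconjugate rather than central diagonal entries, a square root $y=k^2$, and a separate $(-1)\oplus\cdots$ step for odd $n$. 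You instead split $h_n=h'h''$ and keep the noncentral entry as a $1\times1$ norm-one block, handled by \cite[Lemma~2.1]{Pa_Du_25}. That needs only Lemma~\ref{BiSo} and is a legitimate alternative. As written, however, two steps do not work.

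First, for even $n$ the $n-1$ real scalars cannot all be distinct and $\neq\pm1$. Since $u^{-1}=-u$ and $v^{-1}=-v$, one has $uvu^{-1}v^{-1}=(uv)^2$ and $u^{-1}(uv)u=vu=(uv)^{-1}$. So every commutator of skew involutions is conjugate to its inverse. Now view $\mathrm{M}_n(\mathbb H)\subseteq \mathrm{M}_{2n}(\mathbb C)$. If the last diagonal entry is nonreal, the set of real diagonal entries must be stable under $x\mapsto x^{-1}$. With all of them distinct and $\neq\pm1$ this involution has no fixed points, so $n-1$ would have to be even. One of $h',h''$ is nonreal as soon as $h_n\notin\mathbb R$, so this obstruction really occurs.

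The repair for $n=2s$ is as follows. Take one real scalar equal to $1$ and the others $r_1^{\pm1},\dots,r_{s-1}^{\pm1}$. Choose $N(h')=N(h'')=1$ with $h'\neq 1$ and $h'\neq h_n$. Then the diagonals $\bigoplus_j\mathrm{diag}(r_j,r_j^{-1})\oplus(1)\oplus(h')$ and the analogue with $h''$ still have pairwise distinct entries, central except the last. Moreover, $(1)=i\,i\,i^{-1}i^{-1}$.

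Second, your argument for the $2\times2$ blocks only shows that $\mathrm{diag}(r,r^{-1})=JJ'$ is a \emph{product} of two skew involutions. Also, $JgJ^{-1}g^{-1}$ with arbitrary $g$ is not of the required form. What you need is again $uvu^{-1}v^{-1}=(uv)^2$, applied with $u=J$ and $uv=\mathrm{diag}(\sqrt r,1/\sqrt r)$; this is where $r>0$ is used. Equivalently, use the paper's blocks $\begin{pmatrix}0&h\\-h^{-1}&0\end{pmatrix}$ and $\begin{pmatrix}0&1\\-1&0\end{pmatrix}$, whose commutator is $\mathrm{diag}(h^2,h^{-2})$.

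With these two fixes the argument is complete. The similarity of all skew involutions to $i\mathrm I_n$, your second input, is not needed for this lemma.
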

				
				\begin{proof}
					Let $A \in \mathrm{SL}_n(\mathbb H)$. Taking \cite[Lemma~16]{Pa_Ha_Nam_Son} into account, the center of
					$\mathrm{SL}_n(\mathbb H)$ is given by
					$Z(\mathrm{SL}_n(\mathbb H))=\{\pm \mathrm I_n\}.$
					We therefore distinguish two cases.
					
					\medskip
					\noindent
					\textit{Case~1: $A$ is central.}
					If $A=\mathrm I_n=(i\mathrm I_n)(i\mathrm I_n)(i\mathrm I_n)^{-1}(i\mathrm I_n)^{-1},$
					where $(i\mathrm I_n)^2=-\mathrm I_n$, then $A$ is a multiplicative commutator of skew involutions in
					$\mathrm{M}_n(\mathbb H)$. Moreover,
					if $A=-\mathrm I_n$, then it is not difficult to verify that
					$-\mathrm I_n
					=iji^{-1}j^{-1}\mathrm I_n
					=(i\mathrm I_n)(j\mathrm I_n)(i\mathrm I_n)^{-1}(j\mathrm I_n)^{-1},$
					and both $i\mathrm I_n$ and $j\mathrm I_n$ are skew involutions. Thus the conclusion
					holds in the central case.
					
					\medskip
					\noindent
					\textit{Case~2: $A$ is noncentral.}
					We consider two subcases depending on the parity of $n$.
					
					Suppose first that $n=2s$ with $s\ge1$. By Lemma~\ref{LHU}, we may
					assume that $A$ admits a factorization $A=XTY$, where $X$ is unit lower triangular,
					$Y$ is unit upper triangular, and
					$T=\mathrm{diag}(1,\ldots,1,t)$ with $t\in\mathrm{SL}_1(\mathbb{H})$.
					By \cite[Lemma~2.5]{Pa_BiDuHa}, the element $t$ can be written as a multiplicative
					commutator $t=xyx^{-1}y^{-1}$ for some $x,y\in\mathbb{H}$.
					Choosing pairwise distinct elements $h_1,\ldots,h_{s-1}$ from
					$\mathbb R\setminus\{0,\pm1\}$, we obtain a decomposition
					$T=UVU^{-1}V^{-1}$, where $U$ and $V$ are diagonal matrices of the prescribed form: $$ U = \operatorname{diag}(\underbrace{1,\ldots,1}_{n-1\ \text{times}},\, x); V = \operatorname{diag}\bigl(
					h_1^{2}, h_1^{-2},
					\ldots,
					h_{s-1}^{2}, h_{s-1}^{-2},
					y^{-1}, y
					\bigr).$$
					
					Rewriting $A = XUVU^{-1}V^{-1}Y = U\big(U^{-1}XUV\big)U^{-1} V^{-1}Y$ accordingly, we observe that the matrix $U^{-1}XUV$ is lower triangular
					with pairwise nonconjugate diagonal entries. By \cite[Lemma~3.2]{Pa_BiDuHaSo_2022},
					it is therefore similar to a diagonal matrix whose entries are
					$h_1^2,h_1^{-2},\ldots,y^{-1},y$.
			By \cite[Lemma 14]{Pa_Ha_Nam_Son}, there exists $k\in\mathbb H$ such that $y=k^2$.
					A direct computation then shows that this diagonal matrix is itself a commutator of
					two skew involutions in $\mathrm{M}_n(\mathbb H)$, that is, $$ \operatorname{diag}(h_1^2, h_1^{-2}, \ldots, k^{-2}, k^2)= SKS^{-1}K^{-1},$$ where
					\[
					S=\bigoplus_{i=1}^{s-1}
					\begin{pmatrix}
						0 & h_i\\
						- h_i^{-1} & 0
					\end{pmatrix}
					\;\oplus\;
					\begin{pmatrix}
						0 & k^{-1}\\
						- k & 0
					\end{pmatrix},
					\qquad
					K=\bigoplus_{i=1}^{s}
					\begin{pmatrix}
						0 & 1\\
						-1 & 0
					\end{pmatrix}.
					\]
					An analogous argument applies to the matrix $V^{-1}Y$.
					Hence $A$ is a product of two multiplicative commutators of skew involutions when $n$ is even.
					
					Now suppose that $n=2s+1$ with $s\ge1$. Using the same reduction as above, we write
					$A=XTY$, where $T=\mathrm{diag}(1,\ldots,1,t)$ and $t$ is a multiplicative commutator in $\mathbb H$.
					The matrix $T$ decomposes as a direct sum of $1$ and an element of
					$\mathrm{SL}_{2s}(\mathbb H)$, that is, $T =(1) \oplus \mathrm{diag}(1,1,\ldots,1,t),$ with  $\mathrm{diag}(1,1,\ldots,1,t)\in \mathrm{SL}_{2s}(\mathbb{H})$. This allows us to express $T$ as a multiplicative commutator by
					embedding the even-dimensional construction into $\mathrm{M}_n(\mathbb H)$. Indeed, \begin{eqnarray*}
						T &=& (1) \oplus UVU^{-1}V^{-1} \\& =& [(-1)\oplus U][(-1) \oplus V][(-1) \oplus U^{-1}][(-1)\oplus V^{-1}] = U'V'U'^{-1}V'^{-1}.
					\end{eqnarray*}
					Consequently, $A =   U' \big(U'^{-1}X U'V'\big)U'^{-1}V'^{-1}Y$ can again be written as a product of two multiplicative commutators. Indeed, since $-1=iji^{-1}j^{-1}$ itself is a multiplicative commutator of skew involutions in $\mathbb{H}$, the similarity arguments carry
					over without change, that is, the matrix $(-1) \oplus  \mathrm{diag}(h_1^2,h_1^{-2},\ldots,k^{-2},k^2) $ is a multiplicative commutator of skew involutions in $\mathrm{M}_n(\mathbb{H})$. Hence, $U'^{-1} XU'V'$  can be expressed as a multiplicative commutator of skew involution, so is $V'^{-1}Y$. This completes the proof.
				\end{proof}
				
				As mentioned above, the subsequent result below describes skew involutions up to similarity. Its proof may be obtained by a minor modification of \cite{Pa_BiDuHaSo_2023}; nevertheless, for clarity and completeness, we provide a short proof here.
				
				\begin{lemma}\label{skew1}
					Let $D$ be a division ring and let $n\geq2$ be an integer. In the case $\mathrm{char}(D)\neq2$, we additionally assume that $D$ contains a skew involution $\alpha$. If $A\in\mathrm{GL}_n(D)$ is a skew involution, then there exists a non-negative integer $r$ such that $A$ is similar to $$\begin{cases}
						\operatorname{diag}(\alpha \mathrm{I}_r,-\alpha \mathrm{I}_{n-r}) \text{ if } \operatorname{char}(D)\neq2,\\
						\begin{pmatrix}
							\mathrm{I}_r & \mathrm{I}_r\\
							0 & \mathrm{I}_r
						\end{pmatrix}\oplus \mathrm{I}_{n-2r}  \text{ if } \operatorname{char}(D)=2.
					\end{cases}$$
				\end{lemma}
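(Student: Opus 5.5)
The plan is to view $V=\mathrm{M}_{n\times1}(D)$ as a right $D$-vector space of dimension $n$ and treat $A$ as a $D$-linear endomorphism with $A^2=-\mathrm{I}_n$. Similarity of matrices corresponds to choosing a new basis of $V$. So in each case it suffices to exhibit a basis in which $A$ acts by the displayed matrix. The two characteristic cases are handled separately.

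\emph{Case $\mathrm{char}(D)\neq 2$.} Here $\alpha^2=-1$ and $\alpha\neq-\alpha$. The naive idea is to split $V$ into eigenspaces of $A$ for $\pm\alpha$. Since $\alpha$ is not central, ``eigenvector'' must mean $Av=v\alpha$ (right eigenvalue), and the maps $v\mapsto Av\mp v\alpha$ are not $D$-linear. I would therefore change scalars instead. Regard $V$ as a left module over $D[t]$, with $t$ central, $t$ acting by $A$ and $D$ acting on the right. Then $V$ is a module over $B:=D\otimes_F F[t]/(t^2+1)$. Because $\alpha\in D$ is a root of $t^2+1$, the polynomial $t^2+1=(t-\alpha)(t+\alpha)$ factors in $D[t]$ in a suitable sense.

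The concrete step I would take is to show that
\[V=\{v: Av=v\alpha\}+\{v:Av=-v\alpha\},\]
using the identity $v=\tfrac12\big(v-Av\alpha^{-1}\big)+\tfrac12\big(v+Av\alpha^{-1}\big)$, where $\alpha^{-1}=-\alpha$. To check the first summand, put $w=v+Av\alpha$. Then $Aw=Av+A^2v\alpha=Av-v\alpha$ and $w\alpha=v\alpha+Av\alpha^2=v\alpha-Av$, so $Aw=-w\alpha$. Similarly, $w'=v-Av\alpha$ satisfies $Aw'=w'\alpha$. The sum is direct, since $v\alpha=-v\alpha$ forces $v=0$. These sets are not right $D$-subspaces, but they are right modules over the centralizer $C_D(\alpha)$, which is a division ring containing $F(\alpha)$.

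So I would take a maximal family $v_1,\dots,v_r$ in the first set that is right $D$-independent, and $v_{r+1},\dots,v_n$ similarly in the second set. I must then show that together they form a $D$-basis. This is the main obstacle. I would argue that the $D$-span of $W_+=\{v:Av=v\alpha\}$ satisfies $W_+D\cap W_-D=0$, and that $W_\pm$ has a $C_D(\alpha)$-basis which is $D$-independent. The cleanest way is probably to note that $V\cong D^n$ with the $A$-action is a module over $\mathrm{M}_n(D)\otimes F[t]/(t^2+1)$. Alternatively, I would invoke a Cohn-type normal form for algebraic matrices with the minimal polynomial $t^2+1$ and quote the classical result that a matrix annihilated by $t^2+1$ is similar to a diagonal matrix with entries conjugate to $\alpha$. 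In $D$, $-\alpha$ may or may not be conjugate to $\alpha$, which is why the statement keeps both signs. I expect this step to be the main work.

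\emph{Case $\mathrm{char}(D)=2$.} Here $A^2=\mathrm{I}_n$, so $N:=A-\mathrm{I}_n$ satisfies $N^2=A^2-2A+\mathrm{I}_n=0$. By Lemma~\ref{luy linh}, $N$ is similar to a direct sum of Jordan blocks $J_{m}(0)$, and $N^2=0$ forces every $m\le 2$. Say there are $r$ blocks of size $2$ and $n-2r$ blocks of size $1$. A permutation similarity gathers the size-$2$ blocks into the form $\begin{pmatrix}0&\mathrm{I}_r\\0&0\end{pmatrix}\oplus 0_{n-2r}$. Adding $\mathrm{I}_n$, which commutes with every similarity, gives exactly the displayed form for $A$.
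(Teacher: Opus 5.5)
Your characteristic-$2$ case is correct and more self-contained than the paper's, which simply quotes an external classification of involutions. Indeed, $N=A-\mathrm{I}_n$ satisfies $N^2=0$, Lemma~\ref{luy linh} gives Jordan blocks of size at most $2$, and reordering the basis gives the stated form. In characteristic $\neq 2$, your identity and the decomposition $V=W_+\oplus W_-$ with $W_\pm=\{v\mid Av=\pm v\alpha\}$ are also correct. However, the step you flag as the main obstacle is left open, and the way you propose to close it fails. The claim $W_+D\cap W_-D=0$ is false as soon as $\alpha\notin F$. Examples are $D=\mathbb H$, $\alpha=i$, which is exactly the case used in Theorem~\ref{real}. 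To see this, choose $x\in D$ with $x\alpha\neq\alpha x$ and set $d=x+\alpha x\alpha$. Then $d\neq 0$ and $\alpha d=-d\alpha$, so $W_+d=W_-$, and hence $W_+D=W_-D=V$. Consequently a maximal right $D$-independent family inside $W_+$ alone already has $n$ members, and so does one inside $W_-$. The two families cannot be concatenated into a basis, and your indexing $v_1,\dots,v_r$, $v_{r+1},\dots,v_n$ assumes what has to be shown. Your fallbacks (a module over $\mathrm{M}_n(D)\otimes_F F[t]/(t^2+1)$, or a Cohn-type normal form) are either unspecified or amount to citing the lemma itself.

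The missing idea is small: you need no independence between the two spans. Since $V=W_+ + W_-$, the set $W_+\cup W_-$ spans $V$ as a right $D$-space. So take a maximal right $D$-independent subset of the union $W_+\cup W_-$, not of each piece separately. It is a basis $v_1,\dots,v_n$, which you may order so that $v_1,\dots,v_r\in W_+$ and $v_{r+1},\dots,v_n\in W_-$. The matrix $P$ with these columns is invertible and satisfies $AP=P\operatorname{diag}(\alpha\mathrm{I}_r,-\alpha\mathrm{I}_{n-r})$. This completes the proof, and it also shows that $r$ is not an invariant when $\alpha\notin F$.

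With this repair, your right-eigenvector route is genuinely different from the paper's, and it is the one that handles a noncentral $\alpha$ correctly. The paper sets $B=A+\alpha\mathrm{I}_n$, $C=A-\alpha\mathrm{I}_n$ and asserts $BC=CB=A^2-\alpha^2\mathrm{I}_n=0$. It later also uses $T(2\alpha\mathrm{I}_n)T^{-1}=2\alpha\mathrm{I}_n$. Both steps require $\alpha\mathrm{I}_n$ to be central, i.e.\ $\alpha\in F$. Indeed, for $D=\mathbb H$, $\alpha=i$, $A=j\mathrm{I}_2$ one gets $BC=2k\mathrm{I}_2\neq 0$.
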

				
				\begin{proof}
					We distinguish two cases. First, we assume that $\operatorname{char}(D)=2$. In this situation, it is not difficult to verify that $A^{2}=-\mathrm{I}_{n}=\mathrm{I}_{n},$
					which means that $A$ is an involution. Therefore, by applying \cite[Proposition~2.3]{Pa_BiDuHaSo_2023}, we conclude that $A$ must have the required form.
					
					Now assume that $\operatorname{char}(D)\neq 2$. Set $B=A+\alpha \mathrm{I}_{n}$ and  $C=A-\alpha \mathrm{I}_{n}.$
					A straightforward computation shows that $BC=CB=A^{2}-\alpha^{2}\mathrm{I}_{n}=0.$ Taking \cite[Lemma~2.1]{Pa_BiDuHaSo_2023} into account, there exist a non-negative integer $r$ and an invertible matrix
					$T\in \mathrm{GL}_{n}(D)$ such that $$TBT^{-1}=
					\begin{pmatrix}
						B_{11} & B_{12} \\
						0 & 0
					\end{pmatrix},$$
					where $B_{11}\in \mathrm{M}_{r}(D)$ and $B_{12}\in \mathrm{M}_{r\times (n-r)}(D)$. For simplicity, denote $B_{1}=TBT^{-1}$ and $C_{1}=TCT^{-1}.$ From the relation $B-C=2\alpha \mathrm{I}_{n}$, we obtain $B_{1}-C_{1}=2\alpha \mathrm{I}_{n}$, and hence
					$$C_{1}=
					\begin{pmatrix}
						B_{11}-2\alpha \mathrm{I}_{r} & B_{12} \\
						0 & -2\alpha \mathrm{I}_{n-r}
					\end{pmatrix}.$$ Since $C_{1}B_{1}=0$, it follows that $(B_{11}-2\alpha \mathrm{I}_{r})(\,B_{11}\;\; B_{12}\,)=0.$
					Moreover, \cite[Lemma~2.2]{Pa_BiDuHaSo_2023} ensures the existence of
					$Q\in \mathrm{GL}_{n}(D)$ such that
					$(\,B_{11}\;\; B_{12}\,)Q=(\mathrm{I}_{r}\;\; 0).$
					Consequently, we must have $B_{11}-2\alpha \mathrm{I}_{r}=0$, that is, $B_{11}=2\alpha \mathrm{I}_{r}.$ Next, define $P=(2\alpha)^{-1}B_{12}$ and consider the invertible matrix $U=
					\begin{pmatrix}
						\mathrm{I}_{r} & P \\
						0 & \mathrm{I}_{n-r}
					\end{pmatrix}.$
					A direct block computation then shows that
					$$U(TAT^{-1})U^{-1}=
					\begin{pmatrix}
						\alpha \mathrm{I}_{r} & 0 \\
						0 & -\alpha \mathrm{I}_{n-r}
					\end{pmatrix},$$
					as promised.
				\end{proof}
				
				With Lemmas~\ref{skew} and~\ref{skew1} at our disposal, we are now in a position to complete the proof of Theorem~\ref{real}.
				
				\begin{proof}[Proof of Theorem~\ref{real}]
					Let $A \in \mathrm{SL}_n(\mathbb{H})$. By Lemma~\ref{skew}, we may write
					\[
					A = A_1A_2A_1^{-1}A_2^{-1}A_3A_4A_3^{-1}A_4^{-1},
					\]
					where each $A_i \in \mathrm{M}_n(\mathbb{H})$ satisfies $A_i^2 = -\mathrm{I}_n$.	In view of Lemma~\ref{skew1}, every such $A_i$ is similar to a matrix of the form
					$\mathrm{diag}(i\mathrm{I}_r, -i\mathrm{I}_{n-r})$
					for some nonnegative integer $r$. On the other hand, by \cite[Theorem 1]{Pa_Ma_21}, if $p \in \mathbb{R}\langle \mathcal{X} \rangle$ is a multilinear polynomial with $p(\mathbb{H}) \notin \{\{0\}, \mathbb{R}\}$, then both $i$ and $-i$ lie in $p(\mathbb{H})$. It follows readily that the diagonal matrix
					$\mathrm{diag}(i\mathrm{I}_r, -i\mathrm{I}_{n-r})$
					belongs to $p(\mathrm{M}_n(\mathbb{H}))$. Since the set $p(\mathrm{M}_n(\mathbb{H}))$ is invariant under similarity, we conclude that each $A_i$ lies in $p(\mathrm{M}_n(\mathbb{H}))$, which establishes part~(i).	Part~(ii) follows immediately from~(i) together with the result of M.~Mahdavi-Hezavehi \cite[Lemma~5.1]{Pa_Mad_00}, who proved the remarkable identity
					$\mathrm{M}_n(D) = \mathrm{SL}_n(D) - \mathrm{SL}_n(D)$
					for every integer $n \ge 2$.	This completes the proof.
				\end{proof}

				\bigskip
				
				\section*{Declarations}
				
				Our statements here are the following:
				
				\begin{itemize}
					\item {\bf Ethical Declarations and Approval:} The authors have no any competing interest to declare that are relevant to the content of this article.
					\item {\bf Competing Interests:} The authors declare no any conflict of interest.
					\item  {\bf Authors' Contributions:} All two listed authors worked and contributed to the paper equally. The final editing was done by the corresponding author Tran Nam Son and was approved by all of the present authors.
					\item {\bf Availability of Data and Materials:} Data sharing not applicable to this article as no data-sets or any other materials were generated or analyzed during the current study.
				\end{itemize}

				\bigskip
				
				\bibliographystyle{amsplain}

				\vskip 0.2cm

				\noindent Tsiu-Kwen Lee\\
				Department of Mathematics, National Taiwan University, Taipei, Taiwan\\
				Email: tklee@math.ntu.edu.tw,\\ ORCID: \href{https://orcid.org/0000-0002-1262-1491}{0000-0002-1262-1491}

				\vskip 0.2cm
				
				\noindent Tran Nam Son\\ Department of Mathematics, Dong Nai University, 9 Le Quy Don Str., Tam Hiep Ward, Dong Nai City, Vietnam\\ Email: trannamson1999@gmail.com or sontn@dnpu.edu.vn,\\ ORCID: \href{https://orcid.org/0000-0002-9560-6392}{0000-0002-9560-6392}

			\end{document}